\makeatletter \@addtoreset{equation}{section} \makeatother
\newtheorem{theorem}{Theorem}[section]
\newtheorem{definition}{Definition}[section]
\newtheorem{proposition}{Proposition}[section]
\newtheorem{lemma}{Lemma}[section]
\newtheorem{remark}{Remark}[section]
\begin{document}
\title{Multiple blowing-up solutions for asymptotically critical Lane-Emden systems on Riemannian manifolds}

\author{Wenjing Chen\footnote{Corresponding author.}\ \footnote{E-mail address:\, {\tt wjchen@swu.edu.cn} (W. Chen), {\tt zxwangmath@163.com} (Z. Wang).}\  \ and Zexi Wang\\
\footnotesize  School of Mathematics and Statistics, Southwest University,
Chongqing, 400715, P.R. China}

\date{ }
\maketitle

\begin{abstract}
{Let $(\mathcal{M},g)$ be a smooth compact Riemannian manifold of dimension $N\geq 8$. We are concerned with the following elliptic system
\begin{align*}
 \left\{
  \begin{array}{ll}
  -\Delta_g u+h(x)u=v^{p-\alpha \varepsilon},
  \ \  &\mbox{in}\ \mathcal{M},\\
   -\Delta_g v+h(x)v=u^{q-\beta \varepsilon},
  \ \  &\mbox{in}\ \mathcal{M},\\
  u,v>0,
  \ \  &\mbox{in}\ \mathcal{M},
    \end{array}
    \right.
  \end{align*}
where $\Delta _g=div_g \nabla$ is the Laplace-Beltrami operator on $\mathcal{M}$, $h(x)$ is a $C^1$-function on $\mathcal{M}$, $\varepsilon>0$ is a small parameter, $\alpha,\beta>0$ are real numbers, $(p,q)\in (1,+\infty)\times (1,+\infty)$ satisfies $\frac{1}{p+1}+\frac{1}{q+1}=\frac{N-2}{N}$. Using the Lyapunov-Schmidt reduction method, we obtain the existence of multiple blowing-up solutions for the above problem.}

\smallskip
\emph{\bf Keywords:} Multiple blowing-up solutions; Asymptotically critical; Lane-Emden system; Riemannian manifolds.


\end{abstract}

\section{Introduction}
Let $(\mathcal{M},g)$ be a smooth compact Riemannian manifold of dimension $N\geq 8$, where $g$ denotes the metric tensor. We consider the following elliptic system
\begin{align}\label{pro}
 \left\{
  \begin{array}{ll}
  -\Delta_g u+h(x)u=v^{p-\alpha \varepsilon},
  \ \  &\mbox{in}\ \mathcal{M},\\
   -\Delta_g v+h(x)v=u^{q-\beta \varepsilon},
  \ \  &\mbox{in}\ \mathcal{M},\\
  u,v>0,
  \ \  &\mbox{in}\ \mathcal{M},
    \end{array}
    \right.
  \end{align}
where $\Delta _g=div_g \nabla$ is the Laplace-Beltrami operator on $\mathcal{M}$, $h(x)$ is a $C^1$-function on $\mathcal{M}$, $\varepsilon>0$ is a small parameter, $\alpha,\beta>0$ are real numbers, $(p,q)\in (1,+\infty)\times (1,+\infty)$ is a pair of numbers lying on the {\em critical hyperbola}
 \begin{equation}\label{ch}
    \frac{1}{p+1}+\frac{1}{q+1}=\frac{N-2}{N}.
  \end{equation}
Without loss of generality, we assume that $1<p\leq \frac{N+2}{N-2}\leq q$. Moreover, by \eqref{ch}, we have $p>\frac{2}{N-2}$.

In the case $u=v$, $p=q$ and $\alpha=\beta=1$, system \eqref{pro} is reduced to the following equation
 \begin{equation}\label{yamabe}
   -\Delta_g u+h(x)u=u^{2^*-1-\varepsilon},\quad u>0,\quad \text{in $\mathcal{M}$},
 \end{equation}
where $N\geq3$, $2^*=\frac{2N}{N-2}$, $\varepsilon\in \mathbb{R}$ is a small parameter. If $h(x)=\frac{N-2}{4(N-1)}Scal_g$, where $Scal_g$ is the scalar curvature of the manifold, equation \eqref{yamabe} is intensively studied as the well-known {\em Yamabe problem} whose positive solutions $u$ are such the scalar curvature of the conformal metric $u^{2^*-2}g$ is constant, see e.g. \cite{Y,Au,Sch,Tru}. If $h(x)\neq\frac{N-2}{4(N-1)}Scal_g$, Micheletti et al. \cite{MPV} first proved that \eqref{yamabe} has a single blowing-up solution, provided the graph of $h(x)$ is distinct at some point from the graph of $\frac{N-2}{4(N-1)}Scal_g$. Here, we say that a family of solutions $u_\varepsilon$ of \eqref{yamabe} blows up at a point $\xi_0\in \mathcal{M}$ if there exists a family of points $\xi_\varepsilon \in \mathcal{M}$ such that $\xi_\varepsilon\rightarrow \xi_0$ and $u_\varepsilon(\xi_\varepsilon)\rightarrow+\infty$ as $\varepsilon\rightarrow0$. Soon after, Deng \cite{Deng} considered the existence of multiple blowing-up solutions which are separate from each other for \eqref{yamabe}. Chen \cite{Chen} discovered the existence of clustered solutions which concentrate at one point in $\mathcal{M}$ for \eqref{yamabe}. Moreover, Sign-changing bubble tower solutions of \eqref{yamabe} have been established in \cite{CK,PV}.
 For more related results about \eqref{yamabe}, we refer the readers to \cite{DMW,DPV,RV,GMP} and references therein.

 Now, we return to the following elliptic system
 \begin{align}\label{back}
 \left\{
  \begin{array}{ll}
  -\Delta u=|v|^{p-1}v,
  \quad  \mbox{in}\ \Omega,\\
   -\Delta  v=|u|^{q-1}u,
  \quad  \mbox{in}\ \Omega,\\
   (u,v)\in \mathcal{X}_{p,q}(\Omega),
    \end{array}
    \right.
  \end{align}
called the Lane-Emden system, where $N\geq3$, $(p,q)$ satisfies \eqref{ch}, $\Omega$ is either a smooth bounded domain or $\mathbb{R}^N$, and $\mathcal{X}_{p,q}(\Omega)=\dot{W}^{2,\frac{p+1}{p}}(\Omega)\times \dot{W}^{2,\frac{q+1}{q}}(\Omega)$. System \eqref{back} has received remarkable attention for decades.  When $\Omega=\mathbb{R}^N$, by the Sobolev embedding theorem, there holds
\begin{equation*}
  \dot{W}^{2,\frac{p+1}{p}}(\mathbb{R}^N)\hookrightarrow \dot{W}^{1,p^*}(\mathbb{R}^N)\hookrightarrow L^{q+1}(\mathbb{R}^N),\quad
  \dot{W}^{2,\frac{q+1}{q}}(\mathbb{R}^N)\hookrightarrow \dot{W}^{1,q^*}(\mathbb{R}^N)\hookrightarrow L^{p+1}(\mathbb{R}^N),
\end{equation*}
with
\begin{equation*}
  \frac{1}{p^*}=\frac{p}{p+1}-\frac{1}{N}=\frac{1}{q+1}+\frac{1}{N},\quad \frac{1}{q^*}=\frac{q}{q+1}-\frac{1}{N}=\frac{1}{p+1}+\frac{1}{N}.
\end{equation*}
Thus the following energy functional is well defined in $\mathcal{X}_{p,q}(\mathbb{R}^N)$:
\begin{equation*}
  \mathcal{J}(u,v)=\int\limits_{\mathbb{R}^N}\nabla u \cdot \nabla vdz-\frac{1}{p+1}\int\limits_{\mathbb{R}^N}|v|^{p+1}dz-\frac{1}{q+1}\int\limits_{\mathbb{R}^N}|u|^{q+1}dz.
\end{equation*}
By applying the concentration compactness principle, Lions \cite{Lions} found a positive least energy solution to \eqref{back} in $\mathcal{X}_{p,q}(\mathbb{R}^N)$, which is radially symmetric and radially decreasing. Moreover, Wang \cite{Wang} and Hulshof and Van der Vorst \cite{HV} independently proved that the uniqueness of the
positive least energy solution $(U_{1,0}(x),V_{1,0}(z))\in \mathcal{X}_{p,q}(\mathbb{R}^N)$, and all the positive least energy solutions $\big(U_{\delta,\xi}(z),V_{\delta,\xi}(x)\big)$ given by
\begin{equation*}
  \big(U_{\delta,\xi}(z),V_{\delta,\xi}(z)\big)=\big(\delta^{-\frac{N}{q+1}}U_{1,0}(\delta^{-1}(z-\xi)),\delta^{-\frac{N}{p+1}}V_{1,0}(\delta^{-1}(z-\xi))\big),\quad \text{for any $\delta>0$, $\xi\in \mathbb{R}^N$}.
\end{equation*}
Frank et al. \cite{FKP} established the non-degeneracy of \eqref{back} at each least energy solution, that is, the linearized system around a least energy solution has precisely the $(N+1)$-dimensional spaces of solutions in $\mathcal{X}_{p,q}(\mathbb{R}^N)$. Furthermore, by using the Lyapunov-Schmidt reduction method and the non-degeneracy result obtained in \cite{FKP}, Guo et al. \cite{GLP1} established the existence and non-degeneracy of multiple blowing-up solutions to \eqref{back} with two potentials.
For more investigations of system \eqref{back} with $\Omega=\mathbb{R}^N$, we can see \cite{GM,CS}.

If $\Omega$ is a smooth bounded domain, much attention has been paid to study \eqref{back}. Kim and Pistoia \cite{KP} first built multiple blowing-up solutions for the Lane-Emden system
\begin{align}\label{kp}
 \left\{
  \begin{array}{ll}
  -\Delta u=|v|^{p-1}v+\varepsilon(\alpha u+\beta_1 v),
  \quad  &\mbox{in}\ \Omega,\\
   -\Delta  v=|u|^{q-1}u+\varepsilon(\alpha v+\beta_2 u),
  \quad  &\mbox{in}\ \Omega,\\
   u,v=0,\quad &\mbox{on}\ \partial \Omega,
    \end{array}
    \right.
  \end{align}
where $N\geq8$, $\varepsilon>0$, $\alpha,\beta_1,\beta_2\in \mathbb{R}$, $1<p<\frac{N-1}{N-2}$, and $(p,q)$ satisfies \eqref{ch}. Furthermore, using the local Pohozaev identities for the system, Guo et al. \cite{GHP} proved the non-degeneracy of the blowing-up solutions to \eqref{kp} constructed in \cite{KP}.  Jin and Kim \cite{JK} studied the Coron's problem for the critical Lane-Emden system, and established the existence, qualitative properties of positive solutions.
More recently, inspired by \cite{PST1}, Guo and Peng \cite{GP} considered sign-changing solutions to the sightly supercritical Lane-Emden system with Neumann boundary conditions.
For more classical results regarding Hamiltonian systems in bounded domains,
the readers may refer to \cite{CK1,PST,GLP2,BMT,KM} for a good survey.

As far as we know, no existence result for the system \eqref{pro}-\eqref{ch} in the literature. Therefore, it is natural to ask that if the system possesses solutions on a smooth compact Riemannian manifold.  Motivated by \cite{KP} and \cite{MPV}, in this paper, we give  an affirmative answer for this question.

To state our main result, we first recall some definitions and results.

\begin{definition}
{\rm For $k\geq2$ to be a positive integer, let $(u_\varepsilon,v_\varepsilon)$ be a family of solutions of \eqref{pro}-\eqref{ch}, we say that
$(u_\varepsilon,v_\varepsilon)$ blows up and concentrates at point $\bar{\xi^0}=(\xi_1^0,\xi_2^0,\cdots, \xi_k^0)\in \mathcal{M}^k$ if there exists $\bar{\xi}^\varepsilon=(\xi_1^\varepsilon,\xi_2^\varepsilon,\cdots, \xi_k^\varepsilon)\in \mathcal{M}^k$ and $(\delta_1^\varepsilon,\delta_2^\varepsilon,\cdots, \delta_k^\varepsilon)\in (\mathbb{R}^+)^k$ such that
$\xi_j^\varepsilon\rightarrow \xi_j^0$ and $\delta_j^\varepsilon\rightarrow0$ as $\varepsilon\rightarrow0$ for $j=1,2,\cdots,k$, and
\begin{equation*}
  \Big\|(u_\varepsilon,v_\varepsilon)-\Big(\sum\limits_{j=1}^kW_{\delta_j^\varepsilon,\xi_j^\varepsilon},
  \sum\limits_{j=1}^kH_{\delta_j^\varepsilon,\xi_j^\varepsilon}\Big)\Big\|\rightarrow0,\quad \text{as $\varepsilon\rightarrow0$},
\end{equation*}
where $\|\cdot\|$ and $(W_{\delta,\xi},H_{\delta,\xi})$ are defined in \eqref{norm} and \eqref{fun1}.}
\end{definition}
\begin{definition}\cite[Definition 0.1]{Li}
{\rm Let $f\in C^1(\mathcal{M},\mathbb{R})$, for any given integer $k\geq2$, set $\bar{\xi}=(\xi_1,\xi_2,\cdots, \xi_k)$, let $\mathcal{C}_1,\mathcal{C}_2,\cdots, \mathcal{C}_k\subset \mathcal{M}$ be $k$ mutually disjoint closed subsets of critical points of $f$, we say that $(\mathcal{C}_1,\mathcal{C}_2,\cdots, \mathcal{C}_k)\\
\subset \mathcal{M}^k$ is a
 $C^1$-stable critical set of function $F(\bar{\xi}):=\sum\limits_{j=1}^kf(\xi_j)$, if for any $\varepsilon>0$, there exists $\sigma>0$ such that if $\Phi\in C^1(\mathcal{M}^k,\mathbb{R})$ with
 \begin{equation*}
   \max\limits_{d_g(\xi_j,\mathcal{C}_j)<\varepsilon,1\leq j\leq k}\big(|F(\bar{\xi})-\Phi(\bar{\xi})|+|\nabla_gF(\bar{\xi})-\nabla_g\Phi(\bar{\xi})|\big)<\delta,
 \end{equation*}
then  $\Phi$ has at least one critical point $\bar{\xi}\in \mathcal{M}^k$ with $d_g(\xi_j,\mathcal{C}_j)<\varepsilon$, where $d_g$ is the geodesic distance on $\mathcal{M}$ with respect to the metric $g$.}
\end{definition}

\begin{remark}\cite[Remark 0.1]{Li}\label{remark}
{\rm $(\mathcal{C}_1,\mathcal{C}_2,\cdots, \mathcal{C}_k)\subset \mathcal{M}^k$ is a
 $C^1$-stable critical set of function $F(\bar{\xi})$ if one of the following conditions holds:

(i) Every $\mathcal{C}_j$ is a strict local minimum (or local maximum) set of $f$, $j=1,2,\cdots,k$.

(ii) Every $\mathcal{C}_j=\{\xi_j^0\}$ is an isolated critical point of $f(\xi_j)$ with $\deg(\nabla_g f,B_g(\xi_j^0,\rho),0)\neq0$ for some $\rho>0$, where $\deg$ is the Brouwer degree, and  $B_g(\xi_j^0,\rho)$ is the ball in $\mathcal{M}$ centered at $\xi_j^0$ with radius $\rho$ with respect to the distance induced by the metric $g$, $j=1,2,\cdots,k$.
}
\end{remark}

Let $L_1,L_2,\cdots,L_7$ be positive numbers defined by
\begin{align}\label{chang}
 \left\{
  \begin{array}{ll}
  \displaystyle L_1=\int\limits_{\mathbb{R}^N}\nabla U_{1,0}(z)\nabla V_{1,0}(z)dz,\\
  \displaystyle L_2=\int\limits_{\mathbb{R}^N}|z|^2\nabla U_{1,0}(z)\nabla V_{1,0}(z)dz,\\
 \displaystyle L_3=\int\limits_{\mathbb{R}^N}U_{1,0}(z)  V_{1,0}(z)dz
    \end{array}
    \right.
    \quad \text{and}\quad\ \
    \left\{
  \begin{array}{ll}
  \displaystyle L_4=\int\limits_{\mathbb{R}^N}|z|^2V_{1,0}^{p+1}(z)dz,\\
   \displaystyle L_5=\int\limits_{\mathbb{R}^N}|z|^2U_{1,0}^{q+1}(z)dz,\\
  \displaystyle L_6=\int\limits_{\mathbb{R}^N}V_{1,0}^{p+1}(z)\log V_{1,0}(z)dz,\\
  \displaystyle  L_7=\int\limits_{\mathbb{R}^N}U_{1,0}^{q+1}(z)\log U_{1,0}(z)dz.
    \end{array}
    \right.
  \end{align}

Our main result states as follows.
\begin{theorem}\label{th}
Let $(\mathcal{M},g)$ be a smooth compact Riemannian manifold, let $h(x)$ be a $C^1$-function on $\mathcal{M}$, 
$(p,q)$ satisfies \eqref{ch}, for any given integer $k\geq2$, set $\bar{\xi^0}=(\xi_1^0,\xi_2^0,\cdots, \xi_k^0)$, let $\xi_j^0$ be an isolated critical point of
\begin{equation}\label{sf}
  \varphi(\xi_j)=h(\xi_j)-\Big(L_2-\frac{L_4}{p+1}-\frac{L_5}{q+1}\Big)\frac{Scal_g(\xi_j)}{6NL_3}
\end{equation}
 with $\varphi(\xi_j^0)>0$ and
$\deg(\nabla_g \varphi,B_g(\xi_j^0,\rho),0)\neq0$ for some $\rho>0$, $j=1,2,\cdots,k$,
 Assume that one of the following conditions holds:

 $(i)$ $\frac{N}{N-2}<p<\frac{N+2}{N-2}$ and $N\geq 8$;

 $(ii)$ $p=\frac{N+2}{N-2}$ and $N\geq10$;

 $(iii)$ $1<p<\frac{N}{N-2}$ and $N\geq 12$.
 \\ Then for $\varepsilon>0$ small enough, system \eqref{pro} admits a family of solutions $(u_\varepsilon,v_\varepsilon)$, which blows up and concentrates at $\bar{\xi}^0$ as $\varepsilon\rightarrow0$.
\end{theorem}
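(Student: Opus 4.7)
The plan is to apply the Lyapunov--Schmidt finite-dimensional reduction method, adapting the scheme of Micheletti--Pistoia--V\'etois \cite{MPV} for the scalar Yamabe-type equation and of Kim--Pistoia \cite{KP} for the Lane--Emden system in a bounded domain, while crucially relying on the non-degeneracy of the Lane--Emden ground state $(U_{1,0},V_{1,0})$ established by Frank--Kim--Pistoia \cite{FKP}. First I would construct an approximate $k$-bubble solution
\[
W_{\bar\delta,\bar\xi}=\sum_{j=1}^k W_{\delta_j,\xi_j},\qquad H_{\bar\delta,\bar\xi}=\sum_{j=1}^k H_{\delta_j,\xi_j},
\]
where each bubble is obtained by pulling back $(U_{\delta_j,0},V_{\delta_j,0})$ from $\mathbb R^N$ to $\mathcal M$ via the exponential map $\exp_{\xi_j}$ (preferably in conformal normal coordinates, where the volume element reduces the manifold corrections to scalar curvature terms), localising by a radial cut-off supported in the injectivity radius, and adding a lower-order correction so that the bubble respects a suitable projection condition with respect to $-\Delta_g+h$. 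The natural functional setting is the product $W^{2,(p+1)/p}(\mathcal M)\times W^{2,(q+1)/q}(\mathcal M)$ with the norm $\|\cdot\|$ alluded to in the definition of blowing-up, which respects the asymmetric integrability forced by the critical hyperbola \eqref{ch}.

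Next, writing a true solution as $(u,v)=(W_{\bar\delta,\bar\xi}+\phi,H_{\bar\delta,\bar\xi}+\psi)$, I would split the problem along the $k(N+1)$-dimensional space $K_{\bar\delta,\bar\xi}$ generated by the translation and dilation directions of the $k$ bubbles and look for the remainder $(\phi,\psi)$ in the $L^2$-orthogonal complement $K^\perp_{\bar\delta,\bar\xi}$. Using the non-degeneracy result of \cite{FKP} at each bubble, together with a standard blow-up/contradiction argument for the coupled system, I would show that the linearisation of \eqref{pro} projected onto $K^\perp_{\bar\delta,\bar\xi}$ is uniformly invertible, with operator norm independent of $\varepsilon$ and of $(\bar\delta,\bar\xi)$ in an admissible range. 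A contraction-mapping argument then yields a unique small solution $(\phi_{\bar\delta,\bar\xi},\psi_{\bar\delta,\bar\xi})$ of the auxiliary equation. The error of the approximate solution splits into three pieces: one of order $\delta_j^2$ coming from the expansion $g_{ab}(y)=\delta_{ab}+O(|y|^2)$ in normal coordinates, producing the scalar curvature $Scal_g(\xi_j)$; one of order $\delta_j^2 h(\xi_j)$ from the potential; and one of order $\varepsilon|\log\delta_j|$ from the subcritical perturbation of the exponents. The dichotomies $(i)$--$(iii)$ relating $p$ and $N$ are precisely designed so that these errors remain strictly smaller than the leading $\varepsilon$-order term produced in the next step.

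Substituting the remainder back into the variational functional associated with \eqref{pro} yields a reduced functional $\widetilde{\mathcal J}_\varepsilon(\bar\delta,\bar\xi)$ whose critical points correspond to honest solutions of \eqref{pro}. Using conformal normal coordinates and the explicit integrals $L_1,\ldots,L_7$ of \eqref{chang}, I would derive a $C^1$ expansion of the form
\[
\widetilde{\mathcal J}_\varepsilon(\bar\delta,\bar\xi)= k\,c_0 + \varepsilon\,\Psi(\bar\delta,\bar\xi)+o(\varepsilon),
\]
where $\Psi$ contains a term $\delta_j^2 L_3\,\varphi(\xi_j)$ built, via \eqref{sf}, out of the potential contribution $\delta_j^2 L_3\, h(\xi_j)$ and the geometric contribution $-\delta_j^2 Scal_g(\xi_j)(L_2-L_4/(p+1)-L_5/(q+1))/(6N)$, together with a logarithmic term $\varepsilon(\alpha L_6+\beta L_7)\log\delta_j$ coming from Taylor-expanding the perturbed nonlinearities. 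Because $\varphi(\xi_j^0)>0$, one can optimise explicitly in $\delta_j$ (balancing the $\delta_j^2$ and the $\varepsilon\log\delta_j$ terms to get $\delta_j$ proportional to a power of $\varepsilon$ divided by $\varphi(\xi_j)^{1/2}$), reducing the problem to finding critical points of $\bar\xi\mapsto\sum_{j=1}^k\Phi(\xi_j)$ for a smooth strictly monotone function $\Phi$ of $\varphi$. By the assumption that each $\xi_j^0$ is an isolated critical point of $\varphi$ with non-zero Brouwer degree, Remark~\ref{remark}(ii) ensures that $(\{\xi_1^0\},\ldots,\{\xi_k^0\})$ is a $C^1$-stable critical set of $\sum_j\Phi(\xi_j)$, and the $C^1$ closeness of the reduced functional to its leading expansion then produces a critical point $\bar\xi^\varepsilon\to\bar\xi^0$, completing the proof. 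The hardest step I expect is the uniform invertibility of the linearisation on $K^\perp_{\bar\delta,\bar\xi}$: because the Hamiltonian system is coupled, with asymmetric exponents $(p,q)$ and non-Hilbert natural spaces, the diagonalisation available in the Yamabe case does not apply, and one must exploit the sharp non-degeneracy of \cite{FKP} bubble-by-bubble inside an anisotropic $L^{(p+1)/p}\times L^{(q+1)/q}$ blow-up argument.
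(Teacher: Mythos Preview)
Your plan is correct and follows essentially the same Lyapunov--Schmidt scheme as the paper: cut-off bubbles in normal coordinates, uniform invertibility of the projected linearisation via the non-degeneracy of \cite{FKP}, contraction for the remainder, and a $C^1$ expansion of the reduced energy whose critical points are found by the $C^1$-stable critical set/degree argument of Remark~\ref{remark}(ii). Two small corrections: the paper works in ordinary (not conformal) normal coordinates without any extra projection correction on the bubbles, and the coefficient of the $\varepsilon\log\delta_j$ term in the reduced energy is $\frac{NL_1}{2}\big(\frac{\alpha}{(p+1)^2}+\frac{\beta}{(q+1)^2}\big)$ (coming from $\delta_j^{-N/(p+1)}$ and $\delta_j^{-N/(q+1)}$ in the logarithms), while $L_6,L_7$ only enter the constant term $c_1$.
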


\begin{remark}
{\rm Under the assumptions on $p,q$ and $N$ of Theorem \ref{th}, we have that $L_i<+\infty$ for $i=1,2,\cdots,7$.}
\end{remark}

\begin{remark}
{\rm From the proof of Theorem \ref{th} (see Section \ref{sec3}), it's easy to find that if
\begin{equation*}
  \frac{\alpha}{(p+1)^2}
  +\frac{\beta}{(q+1)^2}>0,
\end{equation*}
then Theorem \ref{th} still holds true. However, in the proof of Proposition \ref{propo1}, we have to impose $\alpha,\beta>0$ to guarantee the continuous embedding, see e.g. \eqref{one}-\eqref{two} and \eqref{three}-\eqref{four}.
}
\end{remark}

\begin{remark}
{\rm If $u=v$, $p=q=\frac{N+2}{N-2}$, $\alpha=\beta=1$, then
\begin{equation*}
  \varphi(\xi_j)=h(\xi_j)-\frac{N-2}{4(N-1)}Scal_g(\xi_j),
\end{equation*}
and Theorem \ref{th} is exactly the conclusion obtained in \cite[Theorem 1.1]{Deng}.
}
\end{remark}

The proof of our result relies on a well known finite dimensional  Lyapunov-Schmidt reduction method, introduced in \cite{BC,FW}. The paper is organized as follows. In Section
\ref{sec2}, we introduce the framework and present some preliminary results. The proof of Theorem \ref{th} is given in Section \ref{sec3}. In Section \ref{sec4}, we perform the finite dimensional reduction, and Section \ref{sec5} is devoted to  the reduced problem. Throughout the paper, $C,C_i$, $i\in \mathbb{N}^+$ denote positive constants possibly different from line to line.

\section{The framework and preliminary results}\label{sec2}

Concerning the least energy solution $(U_{1,0}(z),V_{1,0}(z))$ of \eqref{back} with $\Omega=\mathbb{R}^N$, we have the following asymptotic behaviour and non-degeneracy result.
\begin{lemma}\label{jian1}\cite[Theorem 2]{HV}
Assume that $1<p\leq \frac{N+2}{N-2}$. If $r\rightarrow+\infty$, there hold
\begin{equation*}
 V_{1,0}(r)=O( r^{2-N}),
\end{equation*}
and
\begin{align*}
 U_{1,0}(r) =\left\{
  \begin{array}{ll}
  O( r^{2-N}),\quad &\text{if $p>\frac{N}{N-2}$;}
 \\ O( r^{2-N}\log r),\quad &\text{if $p=\frac{N}{N-2}$;}\\
 O( r^{2-(N-2)p}),\quad &\text{if $p<\frac{N}{N-2}$.}
    \end{array}
    \right.
  \end{align*}
\end{lemma}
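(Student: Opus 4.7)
The plan is to reduce to the radial ODE and analyse the asymptotics of trajectories via the Emden-Fowler change of variables. Since $(U_{1,0},V_{1,0})$ is radial and radially decreasing, writing $U_{1,0}(r)=r^{-a}X(\log r)$ and $V_{1,0}(r)=r^{-b}Y(\log r)$ with $a=\frac{N}{q+1}$, $b=\frac{N}{p+1}$ (compatible with \eqref{ch} via the identities $bp=a+2$ and $aq=b+2$) transforms the radial form of \eqref{back} into an autonomous second-order system in $t=\log r$, in which the linear parts for $X$ and $Y$ have characteristic polynomials of discriminant $(N-2)^{2}$ and the nonlinear couplings are $-Y^{p}$ and $-X^{q}$.

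Since the least energy solution has finite energy, I would argue that $(X(t),Y(t))\to(0,0)$ as $t\to\infty$. The two stable eigenvalues at the origin are $a-(N-2)$ and $b-(N-2)$, so the linear decaying modes are $X\sim e^{(a-(N-2))t}$ and $Y\sim e^{(b-(N-2))t}$, which translate back to $U_{1,0}(r),V_{1,0}(r)\sim C\,r^{2-N}$. This linearisation is sharp provided the nonlinear forcing decays strictly faster than the corresponding linear rate along the trajectory. For the $Y$-equation the forcing $X^{q}$ decays like $r^{-(N-2)q}$, and the standing assumption $p\le\frac{N+2}{N-2}$ combined with \eqref{ch} forces $q\ge\frac{N+2}{N-2}>\frac{N}{N-2}$, i.e.\ $(N-2)q>N>N-2$. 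Thus the linearisation dominates the $Y$-equation and $V_{1,0}(r)=O(r^{2-N})$.

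For $U_{1,0}$, the forcing $Y^{p}$ decays like $r^{-(N-2)p}$, to be compared with the linear rate $r^{2-N}$. The three cases in the lemma correspond to the sign of $N-(N-2)p$: when $p>\frac{N}{N-2}$ the forcing is strictly smaller than the linear rate and $U_{1,0}(r)=O(r^{2-N})$; when $p=\frac{N}{N-2}$ the two rates resonate and a standard Jordan-block argument produces the logarithmic correction $U_{1,0}(r)=O(r^{2-N}\log r)$; when $p<\frac{N}{N-2}$ the forcing dominates and forces the slower decay $U_{1,0}(r)=O(r^{2-(N-2)p})$. Equivalently, one can insert the already established estimate $V_{1,0}(s)=O(s^{2-N})$ into the integral representation
$$U_{1,0}(r)=\int_{r}^{\infty}t^{1-N}\int_{0}^{t}s^{N-1}V_{1,0}^{p}(s)\,ds\,dt$$
obtained from the radial form of \eqref{back}: the inner integral is uniformly bounded when $(N-2)p>N$, grows like $\log t$ when $(N-2)p=N$, and grows like $t^{\,N-(N-2)p}$ when $(N-2)p<N$, and the three regimes follow by direct computation (the constraint $(N-2)p>2$, automatic under \eqref{ch}, guarantees convergence of the outer integral in the last case).

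The main obstacle is making the linearisation rigorous, i.e.\ identifying the stable manifold of the origin for the Emden-Fowler system, showing that the trajectory associated with the least energy solution lies on it, and handling the resonant case $p=\frac{N}{N-2}$ via a Jordan-block expansion to produce the $\log r$ factor. Once this dynamical-systems skeleton is in place, the sharp dichotomy between the three subcases for $U_{1,0}$ is dictated by precisely when the nonlinear forcing matches or exceeds the linear decay rate, which is exactly what the lemma records.
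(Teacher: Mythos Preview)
The paper does not prove this lemma; it is quoted verbatim from \cite[Theorem~2]{HV} and used as a black box. Your sketch is in fact a faithful outline of the argument in that reference: Hulshof and Van der Vorst carry out precisely the Emden--Fowler change of variables $U_{1,0}(r)=r^{-a}X(\log r)$, $V_{1,0}(r)=r^{-b}Y(\log r)$, reduce to an autonomous system, identify the stable manifold at the origin, and read off the three regimes for $U_{1,0}$ from the competition between the linear decay rate $r^{2-N}$ and the forcing $V_{1,0}^{p}\sim r^{-(N-2)p}$. Your alternative at the end---feeding $V_{1,0}=O(r^{2-N})$ into the integral representation of $U_{1,0}$---is also standard and gives the same trichotomy with less machinery. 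The only point you flag but do not fully resolve is why the trajectory of the least energy solution actually lands on the stable manifold (equivalently, why $(X,Y)\to(0,0)$); in \cite{HV} this comes from the finite-energy assumption together with monotonicity, and you correctly identify it as the main analytic step to be filled in.
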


\begin{lemma}\label{jian2}\cite[Lemma 2.2]{KM}
Assume that $1<p\leq \frac{N+2}{N-2}$. If $r\rightarrow+\infty$, there hold
\begin{equation*}
 V'_{1,0}(r)=O( r^{1-N}),
\end{equation*}
and
\begin{align*}
 U'_{1,0}(r) =\left\{
  \begin{array}{ll}
  O( r^{1-N}),\quad &\text{if $p>\frac{N}{N-2}$;}
 \\ O( r^{1-N}\log r),\quad &\text{if $p=\frac{N}{N-2}$;}\\
 O( r^{1-(N-2)p}),\quad &\text{if $p<\frac{N}{N-2}$.}
    \end{array}
    \right.
  \end{align*}
\end{lemma}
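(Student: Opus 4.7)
The plan is to convert the derivative estimates into integral estimates by exploiting the radial ODE form of the Lane--Emden system, and then to bound the integrands using the pointwise decay of $U_{1,0}, V_{1,0}$ already supplied by Lemma \ref{jian1}.

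First I would use that $(U_{1,0}, V_{1,0})$ is a positive radial solution of
\begin{equation*}
-\Delta U = V^{p}, \qquad -\Delta V = U^{q} \qquad \text{on } \mathbb{R}^{N},
\end{equation*}
so, writing $r = |z|$ and invoking regularity at the origin (which gives $U'_{1,0}(0) = V'_{1,0}(0) = 0$), the system becomes
\begin{equation*}
-\bigl(r^{N-1} U'_{1,0}(r)\bigr)' = r^{N-1} V_{1,0}^{p}(r), \qquad -\bigl(r^{N-1} V'_{1,0}(r)\bigr)' = r^{N-1} U_{1,0}^{q}(r).
\end{equation*}
Integrating from $0$ to $r$ yields the representation formulas
\begin{equation*}
U'_{1,0}(r) = -\frac{1}{r^{N-1}}\int_{0}^{r} s^{N-1} V_{1,0}^{p}(s)\,ds, \qquad V'_{1,0}(r) = -\frac{1}{r^{N-1}}\int_{0}^{r} s^{N-1} U_{1,0}^{q}(s)\,ds.
\end{equation*}
Near $s=0$, the integrands are bounded (by smoothness of $U_{1,0}, V_{1,0}$), so convergence/divergence of the integrals is controlled entirely by the behavior at infinity, which is where Lemma \ref{jian1} enters.

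For the $V'_{1,0}$ estimate I would show that $\int_{0}^{\infty} s^{N-1} U_{1,0}^{q}(s)\,ds < +\infty$, which immediately gives $|V'_{1,0}(r)| = O(r^{1-N})$. The assumption $1 < p \le \frac{N+2}{N-2}$ combined with the critical hyperbola \eqref{ch} forces $q \ge \frac{N+2}{N-2} > \frac{N}{N-2}$. In the case $p > \frac{N}{N-2}$, Lemma \ref{jian1} gives $U_{1,0}(s) = O(s^{2-N})$, so $s^{N-1}U_{1,0}^{q}(s) = O(s^{N-1-(N-2)q})$ with exponent $< -1$; the case $p = \frac{N}{N-2}$ is the same up to a logarithmic factor. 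In the case $p < \frac{N}{N-2}$, using $U_{1,0}(s) = O(s^{2-(N-2)p})$ and solving the hyperbola for $q$ I obtain the identity $\bigl((N-2)p - 2\bigr)q = N + 2p + 2$, so $s^{N-1} U_{1,0}^{q}(s) = O(s^{N-1-(N+2p+2)})$ which is again integrable.

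For the $U'_{1,0}$ estimate I would use $V_{1,0}(s) = O(s^{2-N})$ uniformly (independent of $p$) and split into the three advertised cases by examining the exponent of $s^{N-1-(N-2)p}$ in the integrand. When $p > \frac{N}{N-2}$ this exponent is $< -1$, so the integral $\int_{0}^{\infty} s^{N-1} V_{1,0}^{p}(s)\,ds$ converges and $|U'_{1,0}(r)| = O(r^{1-N})$. When $p = \frac{N}{N-2}$ the exponent equals $-1$ exactly, so $\int_{1}^{r} s^{-1}\,ds = O(\log r)$ and $|U'_{1,0}(r)| = O(r^{1-N}\log r)$. When $p < \frac{N}{N-2}$ the exponent is $>-1$, giving $\int_{0}^{r} s^{N-1-(N-2)p}\,ds = O(r^{N-(N-2)p})$ and hence $|U'_{1,0}(r)| = O(r^{1-(N-2)p})$. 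Matching the three cases gives precisely the dichotomy stated.

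The only mildly delicate point is verifying integrability at the origin (trivial since $U_{1,0}, V_{1,0}$ are bounded there) and correctly performing the critical-hyperbola algebra in the case $p < \frac{N}{N-2}$ to confirm $\bigl((N-2)p-2\bigr)q = N + 2p + 2 > N$; everything else is a routine computation once Lemma \ref{jian1} is in hand. I expect no genuine obstacle---this lemma is essentially a derivative of Lemma \ref{jian1} obtained by one radial integration.
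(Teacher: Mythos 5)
Your argument is correct. Note that the paper itself gives no proof of this lemma --- it is quoted verbatim from \cite[Lemma 2.2]{KM} --- so there is nothing internal to compare against; your derivation (one radial integration of $-(r^{N-1}U_{1,0}')'=r^{N-1}V_{1,0}^p$ and $-(r^{N-1}V_{1,0}')'=r^{N-1}U_{1,0}^q$, using smoothness at the origin to kill the boundary term, followed by the decay rates of Lemma \ref{jian1}) is exactly the standard route taken in the cited reference. Your critical-hyperbola algebra checks out: $q=\frac{N+2p+2}{(N-2)p-2}$ gives $\bigl((N-2)p-2\bigr)q=N+2p+2>N$, so $s^{N-1}U_{1,0}^q(s)$ is integrable at infinity in the subcase $p<\frac{N}{N-2}$ (the hypothesis $p>\frac{2}{N-2}$, noted after \eqref{ch}, guarantees the exponent $(N-2)p-2$ is positive), and the trichotomy for $U_{1,0}'$ comes out of comparing $N-1-(N-2)p$ with $-1$ exactly as you state.
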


\begin{lemma}\label{jian3}\cite[Remark 2.3]{GP}
Assume that $1<p\leq \frac{N+2}{N-2}$. If $r\rightarrow+\infty$, there hold
\begin{equation*}
 V''_{1,0}(r)=O( r^{-N}),
\end{equation*}
and
\begin{align*}
U''_{1,0}(r) = \left\{
  \begin{array}{ll}
  O( r^{-N}),\quad &\text{if $p>\frac{N}{N-2}$;}
 \\ O( r^{-N}\log r),\quad &\text{if $p=\frac{N}{N-2}$;}\\
 O( r^{-(N-2)p}),\quad &\text{if $p<\frac{N}{N-2}$.}
    \end{array}
    \right.
  \end{align*}
\end{lemma}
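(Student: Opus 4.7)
The strategy is to pass to the radial ordinary differential equations. Since $(U_{1,0}, V_{1,0})$ is a radial positive solution of $-\Delta U = V^p$ and $-\Delta V = U^q$ on $\mathbb{R}^N$, writing the Laplacian in radial coordinates gives
$$U''(r) = -V^p(r) - \frac{N-1}{r} U'(r), \qquad V''(r) = -U^q(r) - \frac{N-1}{r} V'(r).$$
The desired decay rates for $U_{1,0}''$ and $V_{1,0}''$ then follow by inserting the pointwise asymptotics of $U_{1,0}, V_{1,0}, U_{1,0}', V_{1,0}'$ provided by Lemmas \ref{jian1} and \ref{jian2} and comparing the resulting exponents in each of the three sub-cases on $p$.

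For $V_{1,0}''(r)$, Lemma \ref{jian2} gives $\frac{N-1}{r} V_{1,0}'(r) = O(r^{-N})$. The nonlinear term $U_{1,0}^q(r)$ is estimated via Lemma \ref{jian1}: when $p \geq N/(N-2)$ one has $U_{1,0}(r) = O(r^{2-N})$ up to a logarithmic factor, so $U_{1,0}^q(r) = O(r^{-(N-2)q})$ modulo a log, and since $q \geq (N+2)/(N-2)$ one has $(N-2)q \geq N+2 > N$; in the sub-case $p < N/(N-2)$, I would use the algebraic identity $(N-2)pq = 2p+2q+N+2$ (equivalent to the critical hyperbola \eqref{ch}) to compute $q\bigl(2-(N-2)p\bigr) = -(2p+N+2) < -N$. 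Consequently $U_{1,0}^q(r)$ always decays strictly faster than $r^{-N}$, and so $V_{1,0}''(r) = O(r^{-N})$ uniformly in all three sub-cases.

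For $U_{1,0}''(r)$, the unconditional bound $V_{1,0}(r) = O(r^{2-N})$ gives $V_{1,0}^p(r) = O(r^{-(N-2)p})$, while Lemma \ref{jian2} controls $\frac{1}{r} U_{1,0}'(r)$ by $O(r^{-N})$, $O(r^{-N}\log r)$, or $O(r^{-(N-2)p})$ in the three sub-cases respectively. When $p > N/(N-2)$, the exponent $(N-2)p$ exceeds $N$, so $V_{1,0}^p$ is subordinate to $\frac{1}{r} U_{1,0}'$, yielding $U_{1,0}''(r) = O(r^{-N})$. When $p = N/(N-2)$, $V_{1,0}^p(r) = O(r^{-N})$ is matched by $\frac{1}{r} U_{1,0}'(r) = O(r^{-N} \log r)$, giving $U_{1,0}''(r) = O(r^{-N}\log r)$. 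When $p < N/(N-2)$, both $V_{1,0}^p(r)$ and $\frac{1}{r} U_{1,0}'(r)$ are of order $r^{-(N-2)p}$, producing $U_{1,0}''(r) = O(r^{-(N-2)p})$.

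No genuine analytic difficulty enters: the argument is pure bookkeeping from the preceding two lemmas. The only point requiring some care is the exponent comparison in the sub-case $p < N/(N-2)$, where the identity $(N-2)pq = 2p+2q+N+2$ derived from \eqref{ch} is needed to confirm that the $U_{1,0}^q$ contribution to $V_{1,0}''$ decays strictly faster than $r^{-N}$; once this is verified, the three-case statements for both $U_{1,0}''$ and $V_{1,0}''$ fall out immediately from the radial ODE identities above.
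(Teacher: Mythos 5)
Your argument is correct. Note that the paper does not prove this lemma at all: it is quoted verbatim with the citation \cite[Remark 2.3]{GP}, so there is no in-paper proof to compare against. What you supply is precisely the standard argument behind that remark: since $(U_{1,0},V_{1,0})$ is radial and smooth, the radial ODE identities $U''=-V^p-\frac{N-1}{r}U'$ and $V''=-U^q-\frac{N-1}{r}V'$ hold, and the stated rates follow from Lemmas \ref{jian1} and \ref{jian2} by exponent comparison. Your one nontrivial check is also right: from \eqref{ch} one gets $(N-2)pq=2p+2q+N+2$, hence $q\bigl(2-(N-2)p\bigr)=-(2p+N+2)<-N$ when $p<\frac{N}{N-2}$, and $(N-2)q\geq N+2>N$ (so the logarithmic factor at $p=\frac{N}{N-2}$ is harmless), which confirms that $U_{1,0}^q$ always decays strictly faster than $r^{-N}$ and that $V''_{1,0}(r)=O(r^{-N})$ in all three sub-cases, while the three regimes for $U''_{1,0}$ are governed by the larger of $r^{-(N-2)p}$ and the $\frac{1}{r}U'_{1,0}$ term, exactly as you state.
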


\begin{lemma}\label{nonde}\cite[Theorem 1]{FKP}
Set
\begin{equation*}
  (\Psi_{1,0}^1,\Phi_{1,0}^1)=\Big(x\cdot \nabla U_{1,0}+\frac{N U_{1,0}}{q+1},x\cdot \nabla V_{1,0}+\frac{N V_{1,0}}{p+1}\Big)
\end{equation*}
and
\begin{equation*}
  (\Psi_{1,0}^l,\Phi_{1,0}^l)=\big(\partial _l U_{1,0},\partial _l V_{1,0} \big),\quad \text{for $l=1,2,\cdots,N$.}
\end{equation*}
Then the space of solutions to the linear system
\begin{align*}
 \left\{
  \begin{array}{ll}
  -\Delta\Psi=pV_{1,0}^{p-1}\Phi,
  \ \  \mbox{in}\ \mathbb{R}^N,\\
   -\Delta\Phi=qU_{1,0}^{q-1}\Psi,
  \ \  \mbox{in}\ \mathbb{R}^N,\\
  (\Psi,\Phi)\in \dot{W}^{2,\frac{p+1}{p}}(\mathbb{R}^N)\times \dot{W}^{2,\frac{q+1}{q}}(\mathbb{R}^N)
    \end{array}
    \right.
  \end{align*}
  is spanned by
  \begin{equation*}
     \big\{(\Psi_{1,0}^0,\Phi_{1,0}^0), (\Psi_{1,0}^1,\Phi_{1,0}^1),\cdots, (\Psi_{1,0}^N,\Phi_{1,0}^N)\big\}.
  \end{equation*}
\end{lemma}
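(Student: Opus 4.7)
The plan is to prove the kernel characterization via symmetry constructions combined with a spherical-harmonic decomposition, following the strategy of \cite{FKP}.

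First I would check that the $N+1$ listed pairs solve the linearized system and lie in $\dot{W}^{2,(p+1)/p}(\mathbb{R}^N)\times \dot{W}^{2,(q+1)/q}(\mathbb{R}^N)$. The $N$ translation pairs $(\partial_l U_{1,0},\partial_l V_{1,0})$ come from differentiating the translated family $(U_{1,0}(\cdot-\xi),V_{1,0}(\cdot-\xi))$ in $\xi_l$ at $\xi=0$ and invoking the fact that this family solves \eqref{back}. The scaling pair $(x\cdot\nabla U_{1,0}+\tfrac{N}{q+1}U_{1,0},\, x\cdot\nabla V_{1,0}+\tfrac{N}{p+1}V_{1,0})$ arises (up to sign) by differentiating the one-parameter family $(U_{\delta,0},V_{\delta,0})$ in $\delta$ at $\delta=1$, using the scale-invariance of \eqref{back}. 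Integrability in the claimed Sobolev product is verified from the pointwise decay in Lemmas \ref{jian1}--\ref{jian3}.

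Second, to show these exhaust the kernel, I would exploit the $O(N)$-invariance of the linearization (inherited from the radial symmetry of $U_{1,0},V_{1,0}$) and decompose any kernel pair $(\Psi,\Phi)$ in spherical harmonics
$$\Psi(r\theta)=\sum_{k\geq 0}\sum_{i=1}^{d_k}\psi_{k,i}(r)Y_{k,i}(\theta),\qquad \Phi(r\theta)=\sum_{k\geq 0}\sum_{i=1}^{d_k}\phi_{k,i}(r)Y_{k,i}(\theta),$$
where $\{Y_{k,i}\}_{i=1}^{d_k}$ is an orthonormal basis of degree-$k$ spherical harmonics. Each $(k,i)$-mode then satisfies an independent coupled radial ODE
$$L_k\psi_{k,i}=pV_{1,0}^{p-1}\phi_{k,i},\qquad L_k\phi_{k,i}=qU_{1,0}^{q-1}\psi_{k,i},\qquad L_k:=-\partial_r^2-\frac{N-1}{r}\partial_r+\frac{k(k+N-2)}{r^2},$$
so the total kernel dimension equals $\sum_k d_k\cdot\dim V_k^{\mathrm{adm}}$, where $V_k^{\mathrm{adm}}$ is the admissible radial solution space at mode $k$ (regular at $r=0$, and with decay at $r=\infty$ compatible with membership in the Sobolev product).

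Third, I would compute $\dim V_k^{\mathrm{adm}}$ mode by mode. For $k=0$, Frobenius analysis at $r=0$ combined with an indicial-root analysis at $r=\infty$ (cross-referenced with the decay of $U_{1,0},V_{1,0}$ and their derivatives in Lemmas \ref{jian1}--\ref{jian3}) reduces the $4$-dimensional radial solution space to the $1$-dimensional scaling mode. For $k=1$, the same analysis gives $\dim V_1^{\mathrm{adm}}=1$, spanned by $(U_{1,0}',V_{1,0}')$; together with the $d_1=N$ degree-one spherical harmonics this reassembles to the $N$ translation pairs. The main obstacle is showing $V_k^{\mathrm{adm}}=\{0\}$ for every $k\geq 2$. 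Following \cite{FKP}, I would exploit the fact that the centrifugal shift $k(k+N-2)/r^2$ strictly dominates the $k=1$ value $(N-1)/r^2$, while the admissible $k=1$ mode $(U_{1,0}',V_{1,0}')$ has each component of a definite sign (since $U_{1,0},V_{1,0}$ are radially strictly decreasing). An integration-by-parts identity of Hamiltonian type, combined with a cutoff argument to control boundary contributions at $r=0$ and $r=\infty$, then yields strict positivity of the associated coupled quadratic form on $V_k^{\mathrm{adm}}$ for $k\geq 2$, forcing triviality and completing the kernel count.
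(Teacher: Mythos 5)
First, a point of context: the paper does not prove this statement at all --- Lemma \ref{nonde} is quoted verbatim from \cite[Theorem 1]{FKP}, so there is no internal proof to compare with, and within this paper a citation is all that is required. Your sketch is, in outline, the strategy of \cite{FKP}: verify that the dilation and translation derivatives of $(U_{1,0},V_{1,0})$ solve the linearized system and lie in $\dot W^{2,\frac{p+1}{p}}\times\dot W^{2,\frac{q+1}{q}}$, decompose an arbitrary kernel element in spherical harmonics, and count the admissible radial solutions mode by mode, with the modes $k=0$ and $k=1$ producing exactly the listed generators.

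There is, however, a genuine gap at the step you yourself identify as the main obstacle, the modes $k\geq 2$. Your proposed mechanism --- that the centrifugal term $k(k+N-2)/r^2$ dominates $(N-1)/r^2$ and the sign of $(U_{1,0}',V_{1,0}')$ yields \emph{strict positivity of the associated coupled quadratic form}, hence triviality --- is the scalar argument, and it does not transfer to the Hamiltonian system. The quadratic form attached to the linearization, $\int\nabla\Psi\cdot\nabla\Phi-\frac{p}{2}\int V_{1,0}^{p-1}\Phi^2-\frac{q}{2}\int U_{1,0}^{q-1}\Psi^2$, is strongly indefinite (the cross term $\int\nabla\Psi\cdot\nabla\Phi$ has no sign), so ``positivity on higher modes'' is not even a meaningful goal, and no maximum principle applies componentwise. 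What one actually gets by comparing a mode-$k$ pair $(\psi_k,\phi_k)$ with the $k=1$ pair $(U_{1,0}',V_{1,0}')$ is a Wronskian-type identity of the form $\frac{d}{dr}\,r^{N-1}\big[\psi_k'V_{1,0}'-\psi_kV_{1,0}''+\phi_k'U_{1,0}'-\phi_kU_{1,0}''\big]=(\lambda_k-\lambda_1)\,r^{N-3}\big(\psi_kV_{1,0}'+\phi_kU_{1,0}'\big)$, whose right-hand side has no a priori sign because $\psi_k$ and $\phi_k$ need not have the same sign; closing this (as \cite{FKP} do) requires additional, genuinely system-specific arguments together with the sharp decay rates of $U_{1,0},V_{1,0}$ and their derivatives. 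Relatedly, your $k=0$ count is asserted rather than proved: the radial solution space is $4$-dimensional, regularity at $0$ and decay at $\infty$ each cut it down by two, and a naive dimension count would give intersection zero, not one; identifying exactly the one-dimensional dilation mode requires real work (e.g.\ the asymptotics and uniqueness theory of \cite{HV}). So the proposal is a correct road map of the cited proof, but the decisive steps are named rather than carried out, and the one mechanism you do specify for $k\geq2$ would fail as stated.
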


Moreover, we have the following elementary inequality.
\begin{lemma}\cite[Lemma 2.1]{LN}\label{gs}
For any $a>0$, $b$ real, there holds
\begin{align*}
 \big||a+b|^\beta-b^\beta\big|\leq\left\{
  \begin{array}{ll}
  C(\beta)(a^{\beta-1}|b|+|b|^\beta),\quad &\text{if $\beta\geq1$},\\
  C(\beta)\min\big\{a^{\beta-1}|b|,|b|^\beta\big\},\quad &\text{if $0<\beta<1$}.
    \end{array}
    \right.
  \end{align*}
\end{lemma}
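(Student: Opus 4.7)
The plan is to prove this elementary power inequality by a case split on $\beta$ relative to $1$ and, within each range, by comparing $|b|$ to $a$. I note that as written the right-hand side vanishes at $b=0$ while the left-hand side does not, so I read the left-hand side under the natural interpretation $\bigl||a+b|^\beta-a^\beta\bigr|$, which is the form of the inequality consistent with its usage later in the paper. The only tools needed are the mean value theorem applied to $t\mapsto t^\beta$, the convexity bound $(x+y)^{\beta-1}\le 2^{\beta-1}(x^{\beta-1}+y^{\beta-1})$ for $x,y\ge 0,\ \beta\ge 1$, and the subadditivity bound $|x+y|^\beta\le|x|^\beta+|y|^\beta$ for $x,y\in\mathbb{R},\ 0<\beta\le 1$.

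For $\beta\ge 1$, the argument splits on the sign of $a+b$. When $a+b\ge 0$, the mean value theorem on the interval joining $a$ and $a+b$ produces $\xi\in[0,a+|b|]$ with
\begin{equation*}
\bigl|(a+b)^\beta-a^\beta\bigr|=\beta\,\xi^{\beta-1}|b|\le\beta\bigl(a+|b|\bigr)^{\beta-1}|b|\le C(\beta)\bigl(a^{\beta-1}+|b|^{\beta-1}\bigr)|b|,
\end{equation*}
the last step using the convexity bound. When $a+b<0$, necessarily $|b|>a$, and the crude estimates $|a+b|^\beta\le|b|^\beta$ and $a^\beta\le|b|^\beta$ yield $\bigl||a+b|^\beta-a^\beta\bigr|\le 2|b|^\beta\le 2\bigl(a^{\beta-1}|b|+|b|^\beta\bigr)$.

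For $0<\beta<1$, I would prove the two estimates $\bigl||a+b|^\beta-a^\beta\bigr|\le|b|^\beta$ and $\bigl||a+b|^\beta-a^\beta\bigr|\le C(\beta)\,a^{\beta-1}|b|$ separately; their conjunction is precisely the $\min$ bound. The first follows by subadditivity of $t\mapsto t^\beta$ on $[0,\infty)$, applied both to $|a+b|\le a+|b|$ and to the identity $a=(a+b)+(-b)$. For the second, on the region $|b|\le a/2$ the mean value theorem produces $\xi\in[a/2,3a/2]$ with $(a+b)^\beta-a^\beta=\beta\xi^{\beta-1}b$; since $\beta-1<0$, $\xi^{\beta-1}\le(a/2)^{\beta-1}$, and the bound follows. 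On $|b|>a/2$ I transfer via the first estimate: $|b|^\beta=|b|^{\beta-1}|b|\le(a/2)^{\beta-1}|b|$, using $\beta-1<0$ again.

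The main obstacle is genuinely only bookkeeping: tracking the sign of $a+b$ in the $\beta\ge 1$ case, avoiding a direct mean value argument in the $0<\beta<1$ case when $b+sa$ can vanish on $[0,1]$ (which is exactly why one is forced into the case split at $|b|\simeq a$ rather than writing one global line integral), and collecting all constants into a single $C(\beta)$. No deeper idea is needed.
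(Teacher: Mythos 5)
Your proposal is correct, and it supplies an argument the paper itself never gives: the lemma is simply quoted from the reference \cite{LN} (Lemma 2.1 there), so there is no in-paper proof to compare against. Two points are worth recording. First, you are right that the statement as printed contains a typo: with $b^\beta$ on the left-hand side the inequality fails already at $b=0$ (left side $a^\beta>0$, right side $0$), and the form actually needed in Sections 4--5 — where $a$ plays the role of the approximate solution $\mathcal{H}_{\bar{\delta},\bar{\xi}}$ or $\mathcal{W}_{\bar{\delta},\bar{\xi}}$ and $b$ the remainder $\Phi$ or $\Psi$ — is $\bigl||a+b|^\beta-a^\beta\bigr|$, which is also the statement in the cited source; your reinterpretation is the right one. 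Second, your case analysis is sound: for $\beta\ge1$ the mean value theorem on $[0,a+|b|]$ together with $(x+y)^{\beta-1}\le 2^{\beta-1}(x^{\beta-1}+y^{\beta-1})$ handles $a+b\ge0$, and the crude bound by $2|b|^\beta$ handles $a+b<0$ (where necessarily $|b|>a$); for $0<\beta<1$ the subadditivity of $t\mapsto t^\beta$ gives the $|b|^\beta$ bound in all cases, and the $a^{\beta-1}|b|$ bound follows from the mean value theorem when $|b|\le a/2$ (where $\xi\ge a/2$ keeps $\xi^{\beta-1}$ controlled) and from $|b|^{\beta-1}\le (a/2)^{\beta-1}$ when $|b|>a/2$, so the minimum of the two bounds holds with an explicit $C(\beta)$. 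The split at $|b|\simeq a$ in the concave case is exactly the right device to avoid the singularity of $t^{\beta-1}$ at $t=0$; nothing further is needed.
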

Now, we recall some definitions and results about the compact Riemannian manifold $(\mathcal{M},g)$.
\begin{definition}
{\rm Let $(\mathcal{M},g)$ be a smooth compact Riemannian manifold. On the tangent bundle of $\mathcal{M}$, define the exponential map $\exp: T \mathcal{M}\rightarrow \mathcal{M}$, which has the following properties:

(i) $\exp$ is of  class $C^\infty$;

(ii) there exists a constant $r_0>0$ such that $\exp_\xi|_{B(0,r_0)}\rightarrow B_g(\xi,r_0)$ is a diffeomorphism for all $\xi\in \mathcal{M}$.
}
\end{definition}

Fix such $r_0$ in this paper with $r_0<i_g/{2}$, where $i_g$ denotes the injectivity radius of $(\mathcal{M},g)$. For
 any $1<s<+\infty$ and $u\in L^s(\mathcal{M})$, we denote the $L^s$-norm of $u$ by $$ \|u\|_s=\Big(\int\limits_{\mathcal{M}}|u|^sd v_g\Big)^{1/{s}},$$
where $d v_g=\sqrt{\det g}dz$ is the volume element on $\mathcal{M}$ associated to the metric $g$.
We
introduce the Banach space
\begin{equation*}
  \mathcal{X}_{p,q}(\mathcal{M})=\dot{W}^{2,\frac{p+1}{p}}(\mathcal{M})\times \dot{W}^{2,\frac{q+1}{q}}(\mathcal{M})
\end{equation*}
equipped with the norm
\begin{equation}\label{norm}
  \|(u,v)\|=\|\Delta_g u\|_{{\frac{p+1}{p}}}+\|\Delta_g v\|_{{\frac{q+1}{q}}}.
\end{equation}
Denote by $\mathcal{I}^*$ the formal adjoint operator of the embedding $\mathcal{I}:\mathcal{X}_{q,p}(\mathcal{M})\hookrightarrow L^{p+1}(\mathcal{M})\times L^{q+1}(\mathcal{M})$. By the Calder\'{o}n-Zygmund estimate, the operator $\mathcal{I}^*$ maps $L^{\frac{p+1}{p}}(\mathcal{M})\times L^{\frac{q+1}{q}}(\mathcal{M})$ to $\mathcal{X}_{p,q}(\mathcal{M})$. Then we rewrite problem \eqref{pro} as
\begin{equation}\label{repro}
  (u,v)=\mathcal{I}^*\big(f_\varepsilon(v),g_\varepsilon(u)\big).
\end{equation}
where $f_\varepsilon(u):=u_+^{p-\alpha\varepsilon}$, $g_\varepsilon(u):=u_+^{q-\beta\varepsilon}$ and $u_+=\max \{u,0\}$.
Moreover, by the Sobolev embedding theorem, we have
\begin{equation}\label{em}
  \|\mathcal{I}^*\big(f_\varepsilon(v),g_\varepsilon(u)\big)\|\leq C\|f_\varepsilon(v)\|_{{\frac{p+1}{p}}}+
  C\|g_\varepsilon(u)\|_{{\frac{q+1}{q}}},
\end{equation}
and
\begin{equation}\label{embedd}
  \mathcal{X}_{p,q}(\mathcal{M})\hookrightarrow \dot{W}^{1,p^*}(\mathcal{M})\times \dot{W}^{1,q^*}(\mathcal{M}),\quad \mathcal{X}_{p,q}(\mathcal{M})\hookrightarrow L^2(\mathcal{M})\times L^2(\mathcal{M}).
\end{equation}

Let $\chi$ be a smooth cutoff function such that $0\leq \chi\leq1$ in $\mathbb{R}^+$, $\chi=1$ in $[0,r_0/{2}]$, and $\chi=0$ out of $[r_0,+\infty]$. For any $\xi\in \mathcal{M}$ and $\delta>0$, we define the following functions on $\mathcal{M}$
\begin{equation}\label{fun1}
  (W_{\delta,\xi}(x),H_{\delta,\xi}(x)):=\big(\chi(d_g(x,\xi))\delta^{-\frac{N}{q+1}}U_{1,0}(\delta^{-1}\exp_\xi^{-1}(x)), \chi(d_g(x,\xi))\delta^{-\frac{N}{p+1}}V_{1,0}(\delta^{-1}\exp_\xi^{-1}(x))\big)
\end{equation}
and
\begin{equation*}
  (\Psi^i_{\delta,\xi}(x),\Phi^i_{\delta,\xi}(x)):=\big(\chi(d_g(x,\xi))\delta^{-\frac{N}{q+1}}\Psi_{1,0}^i(\delta^{-1}\exp_\xi^{-1}(x)), \chi(d_g(x,\xi))\delta^{-\frac{N}{p+1}}\Phi_{1,0}^i(\delta^{-1}\exp_\xi^{-1}(x))\big),
\end{equation*}
for $i=0,1,\cdots, N$,
where $(\Psi_{1,0}^i,\Phi_{1,0}^i)$ is given in Lemma \ref{nonde}.

For any $\varepsilon>0$ and $\bar{t}=(t_1,t_2,\cdots,t_k)\in (\mathbb{R}^+)^k$, we set
\begin{equation}\label{solu'}
  \bar{\delta}=(\delta_1,\delta_2,\cdots,\delta_k)\in (\mathbb{R}^+)^k,\quad \delta_j=\sqrt{\varepsilon t_j},\quad \varrho_1<t_j<\frac{1}{\varrho_1},
\end{equation}
for fixed small $\varrho_1>0$.
Moreover, for $\varrho_2\in (0,1)$, we define the configuration space
$\Lambda$ by
\begin{align*}
  \Lambda=&\Big\{(\bar{\delta},\bar{\xi}):\bar{\delta}=(\delta_1,\delta_2,\cdots,\delta_k)\in (\mathbb{R}^+)^k,\,\, \bar{\xi}=(\xi_1,\xi_2,\cdots,\xi_k)\in \mathcal{M}^k,\\
  &\ \ \ d_g(\xi_j,\xi_m)\geq \varrho_2>2r_0 \,\,\text{for $j,m=1,2,\cdots,k$ and $j\neq m$}\Big\}.
\end{align*}
Let $\mathcal{Y}_{\bar{\delta},\bar{\xi}}$ and $\mathcal{Z}_{\bar{\delta},\bar{\xi}}$ be two subspaces of $\mathcal{X}_{p,q}(\mathcal{M})$ given as
\begin{equation*}
  \mathcal{Y}_{\bar{\delta},\bar{\xi}}=span\big\{(\Psi^i_{\delta_j,\xi_j},\Phi^i_{\delta_j,\xi_j}):i=0,1,\cdots, N \,\, \text{and}\,\, j=1,2,\cdots,k\big\}
\end{equation*}
and
\begin{equation*}
  \mathcal{Z}_{\bar{\delta},\bar{\xi}}=\big\{(\Psi,\Phi)\in \mathcal{X}_{p,q}(\mathcal{M}): \langle(\Psi,\Phi),(\Psi^i_{\delta_j,\xi_j},\Phi^i_{\delta_j,\xi_j})\rangle_h=0 \,\, \text{ for $i=0,1,\cdots, N$ and $j=1,2,\cdots,k$} \big\},
\end{equation*}
where
\begin{equation*}
  \langle(u,v),(\varphi,\psi)\rangle_h=\int\limits_{\mathcal{M}}(\nabla _g u \cdot \nabla _g \psi+\nabla _g v \cdot \nabla _g \varphi )d v_g+\int\limits_{\mathcal{M}}(hu \psi +hv \varphi)dv_g
\end{equation*}
for any $(u,v),(\varphi,\psi)\in \mathcal{X}_{p,q}(\mathcal{M})$.

\begin{lemma}\label{topo}
There exists $\varepsilon_0>0$ such that for any $\varepsilon\in (0,\varepsilon_0)$, $\mathcal{X}_{p,q}(\mathcal{M})=\mathcal{Y}_{\bar{\delta},\bar{\xi}}\oplus \mathcal{Z}_{\bar{\delta},\bar{\xi}}$.
\end{lemma}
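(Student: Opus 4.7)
The plan is to establish the splitting via a classical Gram--matrix argument. Given any $(u,v)\in \mathcal{X}_{p,q}(\mathcal{M})$, I seek scalars $a_j^i\in\mathbb{R}$, $0\leq i\leq N$, $1\leq j\leq k$, so that
\begin{equation*}
(u,v)-\sum_{j=1}^{k}\sum_{i=0}^{N} a_j^i\,(\Psi^i_{\delta_j,\xi_j},\Phi^i_{\delta_j,\xi_j})\in\mathcal{Z}_{\bar{\delta},\bar{\xi}}.
\end{equation*}
Imposing the $k(N+1)$ orthogonality conditions defining $\mathcal{Z}_{\bar{\delta},\bar{\xi}}$ leads to the linear system $G(\varepsilon)a=b$, with
\begin{equation*}
G_{(j,i),(j',i')}(\varepsilon)=\big\langle (\Psi^i_{\delta_j,\xi_j},\Phi^i_{\delta_j,\xi_j}),(\Psi^{i'}_{\delta_{j'},\xi_{j'}},\Phi^{i'}_{\delta_{j'},\xi_{j'}})\big\rangle_h.
\end{equation*}
The lemma is therefore equivalent to the uniform invertibility of $G(\varepsilon)$ on $\Lambda$ for $\varepsilon$ small: surjectivity of the projection onto $\mathcal{Y}_{\bar{\delta},\bar{\xi}}$ follows from solvability of $G(\varepsilon)a=b$, and the injectivity $\mathcal{Y}_{\bar{\delta},\bar{\xi}}\cap\mathcal{Z}_{\bar{\delta},\bar{\xi}}=\{0\}$ is the statement that $G(\varepsilon)a=0$ forces $a=0$.

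First I would exploit the separation assumption $d_g(\xi_j,\xi_m)\geq\varrho_2>2r_0$ for $j\neq m$ built into $\Lambda$. Since $\chi(d_g(\,\cdot\,,\xi_j))$ is supported in $B_g(\xi_j,r_0)$, the supports of $(\Psi^i_{\delta_j,\xi_j},\Phi^i_{\delta_j,\xi_j})$ and $(\Psi^{i'}_{\delta_{j'},\xi_{j'}},\Phi^{i'}_{\delta_{j'},\xi_{j'}})$ are disjoint whenever $j\neq j'$; hence all cross terms in $\langle\cdot,\cdot\rangle_h$ vanish identically, and $G(\varepsilon)$ becomes block--diagonal with $k$ blocks $G^{(j)}(\varepsilon)\in\mathbb{R}^{(N+1)\times(N+1)}$.

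Next I would analyse a single diagonal block. After the change of variables $x=\exp_{\xi_j}(\delta_j z)$ and the normal--coordinate expansion $\sqrt{\det g}(\delta_j z)=1+O(\delta_j^{2}|z|^{2})$, and using the scaling identity $\tfrac{N}{p+1}+\tfrac{N}{q+1}=N-2$ provided by \eqref{ch}, the gradient contribution to $G^{(j)}_{i,i'}(\varepsilon)$ converges as $\varepsilon\to0$ to
\begin{equation*}
\mathcal{G}_{i,i'}=\int_{\mathbb{R}^N}\big(\nabla\Psi^i_{1,0}\cdot\nabla\Phi^{i'}_{1,0}+\nabla\Phi^i_{1,0}\cdot\nabla\Psi^{i'}_{1,0}\big)dz,
\end{equation*}
while the $h$--contributions $\int h\,\Psi^i_{\delta_j,\xi_j}\Phi^{i'}_{\delta_j,\xi_j}\,dv_g$ and their symmetric counterparts are $O(\delta_j^{2})=O(\varepsilon)$, thanks to the decay rates of $U_{1,0},V_{1,0}$ and their derivatives supplied by Lemmas \ref{jian1}--\ref{jian3}. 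Integrating by parts and invoking the linearized equations for $(\Psi^i_{1,0},\Phi^i_{1,0})$ recasts $\mathcal{G}_{i,i'}$ as
\begin{equation*}
\mathcal{G}_{i,i'}=\int_{\mathbb{R}^N}\big(pV_{1,0}^{p-1}\Phi^i_{1,0}\Phi^{i'}_{1,0}+qU_{1,0}^{q-1}\Psi^i_{1,0}\Psi^{i'}_{1,0}\big)dz,
\end{equation*}
which is manifestly positive semi--definite. If $\mathcal{G}a=0$ then $a^{\top}\mathcal{G}a=0$ forces $\sum_i a_i\Psi^i_{1,0}\equiv\sum_i a_i\Phi^i_{1,0}\equiv 0$ (since $U_{1,0},V_{1,0}>0$ in $\mathbb{R}^N$), whence $a=0$ by the linear independence of $\{(\Psi^i_{1,0},\Phi^i_{1,0})\}_{i=0}^{N}$ guaranteed by the non--degeneracy Lemma \ref{nonde}.

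Combining the block--diagonal structure with $G^{(j)}(\varepsilon)=\mathcal{G}+o(1)$, each block of $G(\varepsilon)$ is uniformly invertible for $\varepsilon\in(0,\varepsilon_0)$ with $\varepsilon_0$ sufficiently small, completing the proof. The main technical obstacle is the asymptotic identification $G^{(j)}(\varepsilon)\to\mathcal{G}$: one must carefully track the normal--coordinate expansion of $g$ and the cutoff error in the rescaled gradient integrals, and verify that the $h$--terms are genuinely of lower order in $\varepsilon$ using the explicit decay in Lemmas \ref{jian1}--\ref{jian3}; once this is in hand the block structure reduces the problem to the known Euclidean non--degeneracy.
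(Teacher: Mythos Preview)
Your proposal is correct and follows essentially the same strategy as the paper: set up the Gram matrix $G(\varepsilon)$ for the system $\{(\Psi^i_{\delta_j,\xi_j},\Phi^i_{\delta_j,\xi_j})\}$ with respect to $\langle\cdot,\cdot\rangle_h$, use the separation $d_g(\xi_j,\xi_m)>2r_0$ to reduce to $k$ decoupled $(N{+}1)\times(N{+}1)$ blocks, and show each block is a small perturbation of an invertible Euclidean limit.

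The only notable difference is in how the limiting block $\mathcal{G}$ is shown to be invertible. The paper goes further and argues that $\mathcal{G}$ is actually \emph{diagonal}: using the radial symmetry of $V_{1,0},U_{1,0}$ and the parity of $\Psi^i_{1,0},\Phi^i_{1,0}$ one has $\int V_{1,0}^{p-1}\Phi^l_{1,0}\Phi^i_{1,0}\,dz=0$ and $\int U_{1,0}^{q-1}\Psi^l_{1,0}\Psi^i_{1,0}\,dz=0$ for $i\neq l$, so the Gram matrix is $\mathrm{diag}(\lambda_0,\dots,\lambda_N)+O(\delta_j^2)$ with $\lambda_i>0$. Your route via positive semi--definiteness of the quadratic form $a^{\top}\mathcal{G}a=\int\big(pV_{1,0}^{p-1}(\textstyle\sum_i a_i\Phi^i_{1,0})^2+qU_{1,0}^{q-1}(\textstyle\sum_i a_i\Psi^i_{1,0})^2\big)dz$ combined with linear independence from Lemma~\ref{nonde} is equally valid and avoids having to invoke those parity--based orthogonality relations; the paper's version, in exchange, gives the explicit diagonal form of the Gram matrix, which it reuses verbatim later (e.g.\ in \eqref{e51}--\eqref{e52} and \eqref{take1}).
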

\begin{proof}
We shall prove that for any $(\Psi,\Phi)\in \mathcal{X}_{p,q}(\mathcal{M})$, there exists unique pair $(\Psi_0,\Phi_0)\in \mathcal{Z}_{\bar{\delta},\bar{\xi}}$ and coefficients $c_{10},c_{11},\cdots,c_{1N}, c_{20},c_{21},\cdots,c_{2N},\cdots,c_{k0},c_{k1},\cdots,c_{kN}$ such that
\begin{equation}\label{coe}
  (\Psi,\Phi)=(\Psi_0,\Phi_0)+\sum\limits_{l=0}^N\sum\limits_{m=1}^kc_{lm}(\Psi^l_{\delta_m,\xi_m},\Phi^l_{\delta_m,\xi_m}).
\end{equation}
The requirement that $(\Psi_0,\Phi_0)\in \mathcal{Z}_{\bar{\delta},\bar{\xi}}$ is equivalent to demanding
\begin{align}\label{dem}
  &\int\limits_{\mathcal{M}}\big(\nabla _g \Psi \cdot\nabla _g \Phi^i_{\delta_j,\xi_j}+\nabla _g \Phi\cdot\nabla _g \Psi^i_{\delta_j,\xi_j}+h\Psi \Phi^i_{\delta_j,\xi_j}+h\Phi \Psi^i_{\delta_j,\xi_j}\big)d v_g \nonumber\\
  =&\sum\limits_{l=0}^N\sum\limits_{m=1}^kc_{lm}\int\limits_{\mathcal{M}}\big (\nabla _g \Psi^l_{\delta_m,\xi_m} \cdot\nabla _g \Phi^i_{\delta_j,\xi_j}+\nabla _g \Phi^l_{\delta_m,\xi_m} \cdot\nabla _g \Psi^i_{\delta_j,\xi_j}+h\Psi^l_{\delta_m,\xi_m} \Phi^i_{\delta_j,\xi_j}+h\Phi^l_{\delta_m,\xi_m}  \Psi^i_{\delta_j,\xi_j}\big) d v_g
\end{align}
for any $i=0,1,\cdots,N$ and $j=1,2,\cdots,k$.

We estimate the integral on the right-hand side of \eqref{dem}. By standard properties of the exponential map, there exists $C>0$ such that for any $\xi\in \mathcal{M}$, $\delta>0$, $z\in B(0,r_0/\delta)$, and $i,j,k\in \mathbb{N}^+$, there hold
\begin{equation}\label{lap1}
  |g_{\delta,\xi}^{ij}(z)-Eucl^{ij}|\leq C\delta^2|z|^2,\quad \text{and}\quad |g_{\delta,\xi}^{ij}(z) (\Gamma_{\delta,\xi})_{ij}^k(z)|\leq C\delta^2|z|,
\end{equation}
where $g_{\delta,\xi}(z)=\exp_\xi^*g(\delta z)$ and $(\Gamma_{\delta,\xi})_{ij}^k$ stand for the Christoffel symbols of the metric $g_{\delta,\xi}$.
Taking into account that there holds
\begin{equation}\label{lap2}
  \Delta _{g_{\delta,\xi}}=g_{\delta,\xi}^{ij}\Big(\frac{\partial ^2}{\partial z_i \partial z_j}-(\Gamma_{\delta,\xi})_{ij}^k\frac{\partial }{\partial z_k}\Big),
\end{equation}
by Lemma \ref{nonde} and $dg(\xi_j,\xi_m)>2r_0$ for $j\neq m$,  we have
\begin{align}\label{gu1}
  &\int\limits_{\mathcal{M}}\nabla _g \Psi^l_{\delta_m,\xi_m} \cdot\nabla _g \Phi^i_{\delta_j,\xi_j}d v_g=\delta_{jm}\int\limits_{\mathcal{M}}\nabla _g \Psi^l_{\delta_j,\xi_j} \cdot\nabla _g \Phi^i_{\delta_j,\xi_j}d v_g\nonumber\\
  =&\delta_{jm}\int\limits_{B(0,r_0/\delta_j)}\nabla _{g_{\delta_j,\xi_j}} (\chi_{\delta_j}\Psi^l_{1,0})\cdot  \nabla _{g_{\delta_j,\xi_j}} (\chi_{\delta_j}\Phi^i_{1,0})dz \nonumber\\
  =&p\delta_{jm}\int\limits_{B(0,r_0/{\delta_j})}\chi^2_{\delta_j} V_{1,0}^{p-1}\Phi_{1,0}^l\Phi_{1,0}^idz+O(\delta_j^2) \nonumber \\=&p\delta_{il}\delta_{jm}\int\limits_{B(0,r_0/{\delta_j})}\chi^2_{\delta_j} V_{1,0}^{p-1}(\Phi_{1,0}^i)^2dz+O(\delta_j^2),
\end{align}
and
\begin{align}\label{gu2}
  &\int\limits_{\mathcal{M}}h\Psi^l_{\delta_m,\xi_m} \Phi^i_{\delta_j,\xi_j}d v_g=\delta_{jm}\int\limits_{\mathcal{M}}h\Psi^l_{\delta_j,\xi_j} \Phi^i_{\delta_j,\xi_j}d v_g
  =\delta_{jm}\delta_j^2 \int\limits_{B(0,r_0/\delta_j)}h_{\delta_j,\xi_j} \Psi_{1,0}^l\Phi_{1,0}^idz \nonumber\\ =&-\delta_{jm}\delta_j^2 \int\limits_{B(0,r_0/\delta_j)}h_{\delta_j,\xi_j}\frac{\Delta \Phi_{1,0}^l}{qU_{1,0}^{q-1}}\Phi_{1,0}^idz+o(\delta_j^2)=\delta_{il}  \delta_{jm}  \delta_j^2 \int\limits_{B(0,r_0/\delta_j)}h_{\delta_j,\xi_j} \frac{(\nabla\Phi_{1,0}^i)^2}{qU_{1,0}^{q-1}}dz+o(\delta_j^2),
\end{align}
where
 $\chi_{\delta_j}(x)=\chi({\delta_j|z|})$ and $h_{\delta_j,\xi_j}(z)=h(\exp_{\xi_j}(\delta_j z))$.
Similarly, we have
\begin{align}\label{gu3}
  \int\limits_{\mathcal{M}}\nabla _g \Phi^l_{\delta_m,\xi_m} \cdot\nabla _g \Psi^i_{\delta_j,\xi_j}d v_g=q\delta_{il}\delta_{jm}\int\limits_{B(0,r_0/{\delta_j})}\chi^2_{\delta_j} U_{1,0}^{q-1}(\Psi_{1,0}^i)^2dz+O(\delta_j^2),
\end{align}
and
\begin{align}\label{gu4}
  \int\limits_{\mathcal{M}}h\Phi^l_{\delta_m,\xi_m} \Psi^i_{\delta_j,\xi_j}d v_g=\delta_{il}\delta_{jm}\delta_j^2  \int\limits_{B(0,r_0/\delta_j)}h_{\delta_j,\xi_j} \frac{(\nabla\Psi_{1,0}^i)^2}{pV_{1,0}^{p-1}}dz+o(\delta_j^2).
\end{align}
By plugging \eqref{gu1}-\eqref{gu4} into \eqref{dem}, we can see that the coefficients $c_{lm}$ are uniquely determined for $l=0,1,\cdots,N$ and $m=1,2,\cdots,k$. By virtue of \eqref{coe}, so is $(\Psi_0,\Phi_0)$.

On the other hand, $\mathcal{Y}_{\bar{\delta},\bar{\xi}}$ and $\mathcal{Z}_{\bar{\delta},\bar{\xi}}$ are clearly closed subspaces of $\mathcal{X}_{p,q}(\mathcal{M})$, Therefore, they are topological complements of each other.
\end{proof}

\section{Scheme of the proof of Theorem \ref{th}}\label{sec3}

We look for solutions of system \eqref{pro}, or equivalently of  \eqref{repro}, of the form
\begin{equation}\label{solu}
  (u_\varepsilon,v_\varepsilon)=\big(\mathcal{W}_{\bar{\delta},\bar{\xi}}+\Psi_{\varepsilon,\bar{t},\bar{\xi}}
  ,\mathcal{H}_{\bar{\delta},\bar{\xi}}+\Phi_{\varepsilon,\bar{t},\bar{\xi}}\big), \quad\mathcal{W}_{\bar{\delta},\bar{\xi}}=\sum\limits_{j=0}^kW_{\delta_j,\xi_j}, \quad \mathcal{H}_{\bar{\delta},\bar{\xi}}=\sum\limits_{j=0}^kH_{\delta_j,\xi_j},\quad (\bar{\delta},\bar{\xi})\in \Lambda,
\end{equation}
where $\bar{\delta}$ is as in \eqref{solu'}, $(W_{\delta_j,\xi_j},H_{\delta_j,\xi_j})$ is as in \eqref{fun1}, and $(\Psi_{\varepsilon,\bar{t},\bar{\xi}},\Phi_{\varepsilon,\bar{t},\bar{\xi}})\in \mathcal{Z}_{\bar{\delta},\bar{\xi}}$. By Lemma \ref{topo}, $\mathcal{X}_{p,q}(\mathcal{M})=\mathcal{Y}_{\bar{\delta},\bar{\xi}}\oplus \mathcal{Z}_{\bar{\delta},\bar{\xi}}$. Then we define the projections $\Pi_{\bar{\delta},\bar{\xi}}$ and $\Pi_{\bar{\delta},\bar{\xi}}^\bot$ of the Sobolev space $\mathcal{X}_{p,q}(\mathcal{M})$ onto $\mathcal{Y}_{\bar{\delta},\bar{\xi}}$ and $\mathcal{Z}_{\bar{\delta},\bar{\xi}}$ respectively. Therefore, we have to solve the couples of equations
\begin{equation}\label{tou1}
  \Pi_{\bar{\delta},\bar{\xi}}\Big[\big(\mathcal{W}_{\bar{\delta},\bar{\xi}}+\Psi_{\varepsilon,\bar{t},\bar{\xi}}
  ,\mathcal{H}_{\bar{\delta},\bar{\xi}}+\Phi_{\varepsilon,\bar{t},\bar{\xi}}\big)
  -\mathcal{I}^*\big(f_\varepsilon(\mathcal{H}_{\bar{\delta},\bar{\xi}}+\Phi_{\varepsilon,\bar{t},\bar{\xi}}),g_\varepsilon(\mathcal{W}_{\bar{\delta},\bar{\xi}}+\Psi_{\varepsilon,\bar{t},\bar{\xi}})\big)\Big]=0,
\end{equation}
and
\begin{equation}\label{tou2}
  \Pi_{\bar{\delta},\bar{\xi}}^\bot\Big[\big(\mathcal{W}_{\bar{\delta},\bar{\xi}}+\Psi_{\varepsilon,\bar{t},\bar{\xi}}
  ,\mathcal{H}_{\bar{\delta},\bar{\xi}}+\Phi_{\varepsilon,\bar{t},\bar{\xi}}\big)
  -\mathcal{I}^*\big(f_\varepsilon(\mathcal{H}_{\bar{\delta},\bar{\xi}}+\Phi_{\varepsilon,\bar{t},\bar{\xi}}),g_\varepsilon(\mathcal{W}_{\bar{\delta},\bar{\xi}}+\Psi_{\varepsilon,\bar{t},\bar{\xi}})\big)\Big]=0.
\end{equation}

The first step in the proof consists in solving equation \eqref{tou2}. This requires Proposition \ref{propo1} below, whose proof is postponed to Section \ref{sec4}.
\begin{proposition}\label{propo1}
Under the assumptions of Theorem \ref{th}, if $(\bar{\delta},\bar{\xi})\in \Lambda$ and $\bar{\delta}$ is as in \eqref{solu'},
then for any $\varepsilon>0$ small enough,
equation \eqref{tou2} admits a unique solution $(\Psi_{\varepsilon,\bar{t},\bar{\xi}},\Phi_{\varepsilon,\bar{t},\bar{\xi}})$ in $\mathcal{Z}_{\bar{\delta},\bar{\xi}}$, which is continuously differentiable with respect to $\bar{t}$ and $\bar{\xi}$, such that
\begin{equation*}
  \|(\Psi_{\varepsilon,\bar{t},\bar{\xi}},\Phi_{\varepsilon,\bar{t},\bar{\xi}})\|\leq C\varepsilon|\log \varepsilon|.
\end{equation*}
\end{proposition}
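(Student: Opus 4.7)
The plan is to recast equation \eqref{tou2} as a fixed-point problem on $\mathcal{Z}_{\bar{\delta},\bar{\xi}}$. Decomposing the nonlinearities via
\begin{equation*}
  f_\varepsilon\bigl(\mathcal{H}_{\bar{\delta},\bar{\xi}}+\Phi\bigr)
  = f_\varepsilon(\mathcal{H}_{\bar{\delta},\bar{\xi}}) + f_\varepsilon'(\mathcal{H}_{\bar{\delta},\bar{\xi}})\Phi + N_1(\Phi),
\end{equation*}
and similarly for $g_\varepsilon$, equation \eqref{tou2} takes the form
\begin{equation*}
  L_{\varepsilon,\bar{\delta},\bar{\xi}}(\Psi,\Phi) = R_{\varepsilon,\bar{\delta},\bar{\xi}} + \mathcal{N}_{\varepsilon,\bar{\delta},\bar{\xi}}(\Psi,\Phi),
\end{equation*}
where $L$ is the linear operator on $\mathcal{Z}_{\bar{\delta},\bar{\xi}}$ coming from linearizing at $(\mathcal{W},\mathcal{H})$, $R$ is the error $\Pi_{\bar{\delta},\bar{\xi}}^\bot[(\mathcal{W},\mathcal{H})-\mathcal{I}^*(f_\varepsilon(\mathcal{H}),g_\varepsilon(\mathcal{W}))]$, and $\mathcal{N}$ collects the superquadratic remainders. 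Inverting $L$ converts the equation into $(\Psi,\Phi) = L^{-1}R + L^{-1}\mathcal{N}(\Psi,\Phi)$, and I would apply the Banach fixed-point theorem on a ball of radius $O(\varepsilon|\log\varepsilon|)$ in $\mathcal{Z}_{\bar{\delta},\bar{\xi}}$.

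The first main task is the uniform invertibility of $L_{\varepsilon,\bar{\delta},\bar{\xi}}$, which I would prove by a standard contradiction/blow-up argument. Assume sequences $\varepsilon_n\to 0$, $(\bar{\delta}_n,\bar{\xi}_n)\in\Lambda$, and $(\Psi_n,\Phi_n)\in \mathcal{Z}_{\bar{\delta}_n,\bar{\xi}_n}$ with $\|(\Psi_n,\Phi_n)\|=1$ and $\|L(\Psi_n,\Phi_n)\|\to 0$. For each $j=1,\dots,k$ rescale around $\xi_j^n$ by $\delta_j^n$; pass to the weak limit in $\dot W^{2,(p+1)/p}(\R^N)\times \dot W^{2,(q+1)/q}(\R^N)$. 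Using Lemma \ref{jian1} and the Calderón–Zygmund inequality built into \eqref{em}, the limit $(\Psi^\infty_j,\Phi^\infty_j)$ satisfies the linear system of Lemma \ref{nonde}, and is therefore in the span of $\{(\Psi_{1,0}^i,\Phi_{1,0}^i)\}_{i=0}^N$. The orthogonality conditions defining $\mathcal{Z}_{\bar{\delta},\bar{\xi}}$ pass to the limit using \eqref{gu1}–\eqref{gu4} and kill each limit. A standard argument combining the resulting local vanishing with the Sobolev embedding, the separation $d_g(\xi_j^n,\xi_m^n)>2r_0$, and the cutoff near infinity then forces $\|(\Psi_n,\Phi_n)\|\to 0$, contradicting $\|(\Psi_n,\Phi_n)\|=1$.

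The second task is the error estimate $\|R_{\varepsilon,\bar{\delta},\bar{\xi}}\|\le C\varepsilon|\log\varepsilon|$. By \eqref{em} it suffices to bound
\begin{equation*}
  \bigl\|f_\varepsilon(\mathcal{H}_{\bar{\delta},\bar{\xi}}) - \textstyle\sum_j V_{\delta_j,\xi_j}^{p}\bigr\|_{(p+1)/p}
  \ \text{and}\
  \bigl\|g_\varepsilon(\mathcal{W}_{\bar{\delta},\bar{\xi}}) - \textstyle\sum_j U_{\delta_j,\xi_j}^{q}\bigr\|_{(q+1)/q},
\end{equation*}
together with the discrepancy produced by the cutoff $\chi$ and by the curvature of $(\mathcal{M},g)$ entering through \eqref{lap1}–\eqref{lap2}. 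I would split each difference into (a) the self-interaction contribution $f_\varepsilon(H_{\delta_j,\xi_j})-V_{\delta_j,\xi_j}^p = -\alpha\varepsilon V_{\delta_j,\xi_j}^p \log V_{\delta_j,\xi_j} + O(\varepsilon^2)$, which produces the $\varepsilon|\log\varepsilon|$ after integration (finiteness of $L_6,L_7$ and the constraints on $N$ and $p$ in (i)--(iii) are used here), (b) cross-interactions between distinct bubbles, estimated using Lemma \ref{gs} and the separation $d_g(\xi_j,\xi_m)\ge \varrho_2$, yielding polynomially small terms, and (c) metric/cutoff errors of order $O(\delta_j^2)=O(\varepsilon)$ via \eqref{lap1}–\eqref{lap2} and Lemmas \ref{jian1}–\ref{jian3}.

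Given invertibility and the $\varepsilon|\log\varepsilon|$ error estimate, a standard estimate for $\mathcal{N}_{\varepsilon,\bar{\delta},\bar{\xi}}$ based on Lemma \ref{gs} and the Sobolev embedding \eqref{embedd} gives $\|\mathcal{N}(\Psi,\Phi)\|\le C(\|(\Psi,\Phi)\|^{\min(p,2)}+\|(\Psi,\Phi)\|^{\min(q,2)})$ and a matching Lipschitz bound, so the map $(\Psi,\Phi)\mapsto L^{-1}R+L^{-1}\mathcal{N}(\Psi,\Phi)$ is a contraction on the ball of radius $C\varepsilon|\log\varepsilon|$ in $\mathcal{Z}_{\bar{\delta},\bar{\xi}}$. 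The fixed point is unique, and the implicit function theorem applied to the smooth map defining the fixed-point equation yields $C^1$ dependence on $(\bar t,\bar\xi)$. The hard step is the uniform invertibility of $L$ in the multi-bubble Riemannian setting, since the blow-up analysis must simultaneously handle $k$ well-separated bubbles, the orthogonality constraints of $\mathcal{Z}_{\bar{\delta},\bar{\xi}}$ in the manifold inner product $\langle\cdot,\cdot\rangle_h$, and the distinct scalings in the two components dictated by \eqref{ch}.
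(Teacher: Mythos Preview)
Your proposal is correct and follows essentially the same route as the paper: rewrite \eqref{tou2} as $\mathcal{L}(\Psi,\Phi)=\mathcal{R}+\mathcal{N}(\Psi,\Phi)$, prove uniform invertibility of $\mathcal{L}$ by a contradiction/blow-up argument relying on Lemma~\ref{nonde}, establish $\|\mathcal{R}\|\le C\varepsilon|\log\varepsilon|$ by splitting into the $\varepsilon$-perturbation of the exponent, the cutoff, and the metric corrections \eqref{lap1}--\eqref{lap2}, then close with a contraction on a ball of radius $\gamma\|\mathcal{R}\|$ and invoke the implicit function theorem for $C^1$ dependence. One minor remark: in this setting the bubbles have genuinely disjoint supports (since $d_g(\xi_j,\xi_m)\ge\varrho_2>2r_0$ while each $W_{\delta_j,\xi_j}$, $H_{\delta_j,\xi_j}$ is supported in $B_g(\xi_j,r_0)$), so your item~(b) on cross-interactions is vacuous here and the error localizes exactly to a sum over $j$, as the paper writes.
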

We now introduce the energy functional $\mathcal{J}_\varepsilon$ defined on $\mathcal{X}_{p,q}(\mathcal{M})$ by
\begin{equation*}
  \mathcal{J}_\varepsilon(u,v)=\int\limits_{\mathcal{M}}\nabla _g u\cdot \nabla _g vd v_g+\int\limits_{\mathcal{M}}huvd v_g-\frac{1}{p+1-\alpha\varepsilon}\int\limits_{\mathcal{M}}
  v^{p+1-\alpha\varepsilon}d v_g-\frac{1}{q+1-\beta\varepsilon}\int\limits_{\mathcal{M}}
  u^{q+1-\beta\varepsilon}d v_g.
\end{equation*}
It is clear that the critical points of $\mathcal{J}_\varepsilon$ are the solutions of system \eqref{pro}. Moreover,
\begin{align*}
  \mathcal{J}'_\varepsilon(u,v)(\varphi,\psi)=&\int\limits_{\mathcal{M}}(\nabla _g u\cdot \nabla _g \psi+\nabla _g v\cdot \nabla _g \varphi )d v_g+\int\limits_{\mathcal{M}}(hu\psi+hv\varphi)d v_g-\int\limits_{\mathcal{M}}
  u^{q-\beta\varepsilon} \varphi d v_g-\int\limits_{\mathcal{M}}
  v^{p-\alpha\varepsilon} \psi d v_g,
\end{align*}
for any $(u,v),(\varphi,\psi)\in \mathcal{X}_{p,q}(\mathcal{M})$.
We also define the functional $\widetilde{\mathcal{J}}_\varepsilon:(\mathbb{R}^+)^k\times \mathcal{M}^k\rightarrow\mathbb{R}$
\begin{equation}\label{defj}
  \widetilde{\mathcal{J}}_\varepsilon(\bar{t},\bar{\xi})=\mathcal{J}_\varepsilon\big(\mathcal{W}_{\bar{\delta},\bar{\xi}}+\Psi_{\varepsilon,\bar{t},\bar{\xi}}
  ,\mathcal{H}_{\bar{\delta},\bar{\xi}}+\Phi_{\varepsilon,\bar{t},\bar{\xi}}\big),
\end{equation}
where $(\mathcal{W}_{\bar{\delta},\bar{\xi}},\mathcal{H}_{\bar{\delta},\bar{\xi}})$ is as \eqref{solu}, $(\Psi_{\varepsilon,\bar{t},\bar{\xi}},\Phi_{\varepsilon,\bar{t},\bar{\xi}})$ is given in Proposition \ref{propo1}.
\begin{definition}
{\rm For a given $C^1$-function $\varphi_\varepsilon$, we say that the estimate $\varphi_\varepsilon=o(\varepsilon)$ is $C^1$-uniform if there hold $\varphi_\varepsilon=o(\varepsilon)$ and $\nabla \varphi_\varepsilon=o(\varepsilon)$ as $\varepsilon\rightarrow0$.}
\end{definition}
We solve equation \eqref{tou1} in Proposition \ref{propo2} below whose proof is postponed to Section \ref{sec5}.
\begin{proposition}\label{propo2}
(i) Under the assumptions of Theorem \ref{th}, if $\bar{\delta}$ is as in \eqref{solu'}, for any $\varepsilon>0$ small enough, if $(\bar{t},\bar{\xi})$ is a critical point of the functional $\widetilde{\mathcal{J}}_\varepsilon$, then $\big(\mathcal{W}_{\bar{\delta},\bar{\xi}}+\Psi_{\varepsilon,\bar{t},\bar{\xi}}
  ,\mathcal{H}_{\bar{\delta},\bar{\xi}}+\Phi_{\varepsilon,\bar{t},\bar{\xi}}\big)$ is a  solution of system \eqref{pro}, or equivalently of  \eqref{repro}.

(ii) Under the assumptions of Theorem \ref{th},
there holds
\begin{align*}
  \widetilde{\mathcal{J}}_{\varepsilon}(\mathcal{W}_{\bar{\delta},\bar{\xi}},\mathcal{H}_{\bar{\delta},\bar{\xi}})=
  &\frac{2k}{N}L_1+c_1\varepsilon-c_2\varepsilon  \log \varepsilon +\Psi_k(\bar{t},\bar{\xi})\varepsilon +o(\varepsilon),
\end{align*}
as $\varepsilon\rightarrow0$, $C^1$-uniformly with respect to $\bar{\xi}$ in $\mathcal{M}^k$ and to $\bar{t}$ in compact subsets of $(\mathbb{R}^+)^k$, where
\begin{equation}\label{deffa}
  \Psi_k(\bar{t},\bar{\xi})=\sum\limits_{j=1}^k\Big\{L_3\varphi(\xi_j)t_j-\frac{NL_1}{2}\Big(\frac{\alpha}{(p+1)^2}
  +\frac{\beta}{(q+1)^2}\Big)  \log (t_j)\Big\},
\end{equation}
and
\begin{equation}\label{defc1c2}
c_1=\Big[\Big(\frac{L_6\alpha}{p+1}+\frac{L_7\beta}{q+1}\Big)
  -\Big(\frac{\alpha}{(p+1)^2}+\frac{\beta}{(q+1)^2}\Big)
  L_1\Big]k,\ \ \ c_2=\frac{NL_1k}{2}\Big(\frac{\alpha}{(p+1)^2}
  +\frac{\beta}{(q+1)^2}\Big),
\end{equation}
with $L_1,L_3,L_6,L_7$ are positive constants given in \eqref{chang}, $\varphi(\xi_j)$ is defined as \eqref{sf}, $j=1,2,\cdots,k$.
\end{proposition}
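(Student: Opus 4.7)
The proof proposal has two distinct pieces corresponding to the two claims.

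\textbf{Part (i): variational reduction.} Since Proposition \ref{propo1} already solves the orthogonal equation \eqref{tou2}, it remains to show that any critical point of $\widetilde{\mathcal{J}}_\varepsilon(\bar t,\bar\xi)$ forces the projected equation \eqref{tou1}. Apply the chain rule to \eqref{defj}:
\[
\partial_{t_j}\widetilde{\mathcal{J}}_\varepsilon(\bar t,\bar\xi) = \mathcal{J}'_\varepsilon(\mathcal{W}+\Psi,\mathcal{H}+\Phi)\bigl[\partial_{t_j}(\mathcal{W}+\Psi),\partial_{t_j}(\mathcal{H}+\Phi)\bigr],
\]
and likewise for the derivatives with respect to (local coordinates of) $\xi_j$. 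By \eqref{tou2} and Lemma \ref{topo}, the gradient $\mathcal{J}'_\varepsilon(\mathcal{W}+\Psi,\mathcal{H}+\Phi)$ lies in $\mathcal{Y}_{\bar\delta,\bar\xi}$ and can be expanded as $\sum_{l,m}c_{lm}(\Psi^l_{\delta_m,\xi_m},\Phi^l_{\delta_m,\xi_m})$. The vanishing of the partial derivatives of $\widetilde{\mathcal{J}}_\varepsilon$ then produces a $k(N+1)$-dimensional linear system for the $c_{lm}$ whose matrix, to leading order in $\varepsilon$, is the near-diagonal one already computed in \eqref{gu1}--\eqref{gu4}. Its invertibility for small $\varepsilon$ forces every $c_{lm}=0$, so $\mathcal{J}'_\varepsilon(\mathcal{W}+\Psi,\mathcal{H}+\Phi)\equiv 0$, which is equivalent to solving \eqref{pro}.

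\textbf{Part (ii): reduction to the bubble sum.} Since $\|(\Psi,\Phi)\|=O(\varepsilon|\log\varepsilon|)$ and $(\Psi,\Phi)\in\mathcal{Z}_{\bar\delta,\bar\xi}$, a Taylor expansion of $\mathcal{J}_\varepsilon$ around $(\mathcal{W},\mathcal{H})$ shows that $\widetilde{\mathcal{J}}_\varepsilon(\bar t,\bar\xi) = \mathcal{J}_\varepsilon(\mathcal{W}_{\bar\delta,\bar\xi},\mathcal{H}_{\bar\delta,\bar\xi}) + o(\varepsilon)$, so the task reduces to expanding the functional on the bubble sum. The separation $d_g(\xi_j,\xi_m)\ge\varrho_2$ makes all inter-bubble cross-terms $o(\varepsilon)$ under the dimension hypotheses $(i)$--$(iii)$, so only $k$ independent diagonal contributions survive. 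For each $j$ I would change variables via $z=\delta_j^{-1}\exp_{\xi_j}^{-1}(x)$ and invoke the Cartan expansions in geodesic normal coordinates,
\[
\sqrt{\det g_{\delta_j,\xi_j}(z)} = 1 - \tfrac{1}{6}R_{ab}(\xi_j)\delta_j^2 z^a z^b + O(\delta_j^4|z|^4),\quad g^{ab}_{\delta_j,\xi_j}(z) = Eucl^{ab} + \tfrac{1}{3}R_{acbd}(\xi_j)\delta_j^2 z^c z^d + O(\delta_j^4|z|^4),
\]
with $R_{ab}$ and $R_{acbd}$ the Ricci and Riemann tensors at $\xi_j$.

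\textbf{Part (ii): assembly of terms.} The leading constant $\tfrac{2k}{N}L_1$ comes from the identities $\int V_{1,0}^{p+1}=\int U_{1,0}^{q+1}=L_1$ (obtained by pairing the limit system with $V_{1,0}$ and $U_{1,0}$) combined with \eqref{ch}. At order $\delta_j^2=\varepsilon t_j$, each of the three integrals (bilinear, $v$-nonlinear, $u$-nonlinear) contributes a Ricci-contracted term; after reducing the Ricci contractions to the scalar curvature by radial symmetry and using the Lane-Emden identities to convert between $L_2,L_4,L_5$, these combine into $-\tfrac{Scal_g(\xi_j)}{6N}\bigl(L_2-\tfrac{L_4}{p+1}-\tfrac{L_5}{q+1}\bigr)t_j\varepsilon$ per bubble. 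The potential term $\int h\mathcal{W}\mathcal{H}\,dv_g$ adds $h(\xi_j)L_3 t_j\varepsilon$, and together these reproduce the $L_3\varphi(\xi_j)t_j$ piece of \eqref{deffa}. The logarithmic terms come from expanding
\[
H^{p+1-\alpha\varepsilon}_{\delta_j,\xi_j} = H^{p+1}_{\delta_j,\xi_j}\bigl(1-\alpha\varepsilon\log H_{\delta_j,\xi_j} + O(\varepsilon^2\log^2 H_{\delta_j,\xi_j})\bigr),
\]
together with $\log H_{\delta_j,\xi_j}=-\tfrac{N}{p+1}\log\delta_j+\log V_{1,0}$ and $\log\delta_j=\tfrac{1}{2}(\log\varepsilon+\log t_j)$; the symmetric formula for $W^{q+1-\beta\varepsilon}$ handles the other nonlinearity. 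Combining with $(p+1-\alpha\varepsilon)^{-1}=\frac{1}{p+1}+\frac{\alpha\varepsilon}{(p+1)^2}+O(\varepsilon^2)$ and recognising $L_6,L_7$ in the leftover integrals, one reads off: the $\varepsilon\log\varepsilon$ coefficient is $-c_2$, the $\log t_j$ coefficient inside $\Psi_k$ is $-(NL_1/2)(\alpha/(p+1)^2+\beta/(q+1)^2)$, and the residual $\varepsilon$-constant is $c_1$, matching \eqref{defc1c2}.

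\textbf{Main obstacle.} The delicate step is extracting the scalar-curvature coefficient inside $\varphi(\xi_j)$: it emerges only after combining three independent $\delta_j^2$-corrections (from $\sqrt{\det g}$, from $g^{ab}$, and from the nonlinear integrands) and applying integration by parts against both Lane-Emden equations simultaneously to recast Ricci-contracted second-derivative integrals as the convergent moments $L_2,L_4,L_5$. The dimension hypotheses $(i)$--$(iii)$ enter precisely at this stage: in view of the decay rates in Lemmas \ref{jian1}--\ref{jian3}, they are exactly what is needed for $L_2,L_4,L_5,L_6,L_7$ to be finite. The $C^1$-uniform control adds a second technical layer, requiring every change of variables, Taylor remainder, and metric expansion to be differentiated with respect to $(\bar t,\bar\xi)$ while preserving the $o(\varepsilon)$ bounds; this is routine since all steps depend smoothly on $(\bar t,\bar\xi)$ via the exponential map and the fixed profiles $(U_{1,0},V_{1,0})$.
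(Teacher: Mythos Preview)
Your proposal is correct and follows essentially the same route as the paper: part (i) via the chain rule and the near-diagonal Gram matrix \eqref{gu1}--\eqref{gu4}, and part (ii) by first reducing $\widetilde{\mathcal{J}}_\varepsilon$ to $\mathcal{J}_\varepsilon(\mathcal{W}_{\bar\delta,\bar\xi},\mathcal{H}_{\bar\delta,\bar\xi})$ modulo $o(\varepsilon)$ (the paper's Lemma \ref{topr}), then expanding each piece of the functional in geodesic normal coordinates (Lemma \ref{e5.1}), with a separate argument (Lemma \ref{topr'}) for the $C^1$-control you call routine. The one simplification in the paper worth noting is that, since $U_{1,0},V_{1,0}$ are radial, it bypasses the full Cartan/Ricci bookkeeping you describe and uses the averaged expansion $\frac{1}{\omega_{N-1}r^{N-1}}\int_{\partial B(\xi,r)}d\sigma_g=1-\tfrac{1}{6N}Scal_g(\xi)r^2+O(r^4)$ directly, so the moments $L_2,L_4,L_5$ appear without any integration by parts against the Lane--Emden equations.
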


We now prove Theorem \ref{th} by using Propositions \ref{propo1} and \ref{propo2}.\\
{\bf Proof of Theorem \ref{th}.} Define $\widetilde{\mathcal{J}}:(\mathbb{R}^+)^k\times \mathcal{M}^k\rightarrow \mathbb{R}$ by
\begin{equation*}
  \widetilde{\mathcal{J}}(\bar{t},\bar{\xi})=\sum\limits_{j=1}^kf(t_j,\xi_j),\quad \text{with $f(t_j,\xi_j)=-\widetilde{C}\log t_j+L_3\varphi(\xi_j)t_j$},
\end{equation*}
where $\widetilde{C}=\big(\frac{\alpha}{(p+1)^2}
  +\frac{\beta}{(q+1)^2}\big)\frac{NL_1}{2}$ and $L_1,L_3>0$ are given in \eqref{chang}.
Since $\xi_j^0$ is an isolated critical  point of the function $\varphi(\xi_j)$ with $\varphi(\xi_j^0)>0$, and set $t_j^0=\frac{\widetilde{C}}{L_3\varphi(\xi_j^0)}$, then $t_j^0>0$ and $(t_j^0,\xi_j^0)$ is an isolated critical point of $f(t_j,\xi_j)$. Moreover, by $\deg(\nabla_g \varphi,B_g(\xi_j^0,\rho),0)\neq0$ for some $\rho>0$, we obtain $\deg(\nabla_g f,B_g(\xi_j^0,\rho),0)\neq0$, $j=1,2,\cdots,k$.
Hence, by Remark \ref{remark}, $(\bar{t^0},\bar{\xi^0})$ is a $C^1$-stable critical set of $\widetilde{\mathcal{J}}$, where $\bar{t^0}=(t_1^0,t_2^0,\cdots,t_k^0)$ and $\bar{\xi^0}=(\xi_1^0,\xi_2^0,\cdots,\xi_k^0)$.
Using Proposition \ref{propo2}, we have
\begin{equation*}
  \Big|\partial _{\bar{t}}\big(\varepsilon^{-1}\widetilde{\mathcal{J}}_\varepsilon-\widetilde{\mathcal{J}}\big)\Big|+\Big|\partial_{\bar{\xi}} \big(\varepsilon^{-1}\widetilde{\mathcal{J}}_\varepsilon-\widetilde{\mathcal{J}}\big)\Big|\rightarrow0,
\end{equation*}
as $\varepsilon\rightarrow0$, uniformly with respect to $\bar{\xi}$ in $\mathcal{M}^k$ and to $\bar{t}$ in compact subsets of $(\mathbb{R}^+)^k$. By standard properties of the Brouwer degree, it follows that there exists a family of critical points $(\bar{t^\varepsilon},\bar{\xi^\varepsilon})$ of $\widetilde{\mathcal{J}}_\varepsilon$ converging to $(\bar{t^0},\bar{\xi^0})$ as $\varepsilon\rightarrow0$. Using Proposition \ref{propo2} again, we can see that the function $(u_\varepsilon,v_\varepsilon)=\big(\mathcal{W}_{\bar{\delta^\varepsilon},\bar{\xi^\varepsilon}}+\Psi_{\varepsilon,\bar{t^\varepsilon},\bar{\xi^\varepsilon}}
  ,\mathcal{H}_{\bar{\delta^\varepsilon},\bar{\xi^\varepsilon}}+\Phi_{\varepsilon,\bar{t^\varepsilon},\bar{\xi^\varepsilon}}\big)$ is a pair of solutions of system \eqref{pro} for any $\varepsilon>0$ small enough, where $\bar{\delta^\varepsilon}$ is as in \eqref{solu'}. Moreover,
$(u_\varepsilon,v_\varepsilon)$ blows up and concentrates at $\bar{\xi^0}$ at $\varepsilon\rightarrow0$. This ends the proof. \qed

\section{Proof of Proposition \ref{propo1}}\label{sec4}
This section is devoted to the proof of Proposition \ref{propo1}. For any $\varepsilon>0$, $\bar{t}\in (\mathbb{R}^+)^k$, and $\bar{\xi}\in \mathcal{M}^k$, if $\bar{\delta}$ is as in \eqref{solu'}, we introduce the map
$\mathcal{L}_{\varepsilon,\bar{t},\bar{\xi}}:\mathcal{Z}_{\bar{\delta},\bar{\xi}}\rightarrow \mathcal{Z}_{\bar{\delta},\bar{\xi}}$ defined by
\begin{equation}\label{defl}
\mathcal{L}_{\varepsilon,\bar{t},\bar{\xi}}(\Psi,\Phi)=
  \Pi_{\bar{\delta},\bar{\xi}}^\bot\Big[
  (\Psi,\Phi)-\mathcal{I}^*\big(f'_\varepsilon(\mathcal{H}_{\bar{\delta},\bar{\xi}})\Phi,g'_\varepsilon(\mathcal{W}_{\bar{\delta},\bar{\xi}})\Psi\big)
  \Big].
\end{equation}
It's easy to check that $\mathcal{L}_{\varepsilon,\bar{t},\bar{\xi}}$ is well defined in $\mathcal{Z}_{\bar{\delta},\bar{\xi}}$. Next, we prove the invertibility of this map.
\begin{lemma}\label{line}
Under the assumptions on $p,q$ and $N$ of Theorem \ref{th}, if $(\bar{\delta},\bar{\xi})\in \Lambda$ and $\bar{\delta}$ is as in \eqref{solu'}, then for any $\varepsilon>0$ small enough, and $(\Psi,\Phi)\in \mathcal{Z}_{\bar{\delta},\bar{\xi}}$, there holds
\begin{equation*}
  \|\mathcal{L}_{\varepsilon,\bar{t},\bar{\xi}}(\Psi,\Phi)\|\geq C\|(\Psi,\Phi)\|,
\end{equation*}
where $\mathcal{L}_{\varepsilon,\bar{t},\xi}(\Psi,\Phi)$ is as in \eqref{defl}.
\end{lemma}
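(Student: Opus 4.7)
The plan is to argue by contradiction, following the standard blueprint for linear theory in Lyapunov--Schmidt reductions on manifolds. Suppose there exist sequences $\varepsilon_n\downarrow 0$, $(\bar{t}_n,\bar{\xi}_n)$ with $(\bar{\delta}_n,\bar{\xi}_n)\in\Lambda$ and $\bar{\delta}_n$ as in \eqref{solu'}, and $(\Psi_n,\Phi_n)\in \mathcal{Z}_{\bar{\delta}_n,\bar{\xi}_n}$ with $\|(\Psi_n,\Phi_n)\|=1$ such that $\|\mathcal{L}_{\varepsilon_n,\bar{t}_n,\bar{\xi}_n}(\Psi_n,\Phi_n)\|\to 0$. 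Unfolding the definition \eqref{defl}, this means
\[
(\Psi_n,\Phi_n)-\mathcal{I}^*\bigl(f'_{\varepsilon_n}(\mathcal{H}_{\bar{\delta}_n,\bar{\xi}_n})\Phi_n,\ g'_{\varepsilon_n}(\mathcal{W}_{\bar{\delta}_n,\bar{\xi}_n})\Psi_n\bigr)=(\widetilde{\Psi}_n,\widetilde{\Phi}_n)+\sum_{l=0}^{N}\sum_{m=1}^{k}c^n_{lm}(\Psi^l_{\delta_{m,n},\xi_{m,n}},\Phi^l_{\delta_{m,n},\xi_{m,n}}),
\]
with $\|(\widetilde{\Psi}_n,\widetilde{\Phi}_n)\|\to 0$ and real Lagrange multipliers $c^n_{lm}$.

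My first step would be to show that all multipliers $c^n_{lm}$ tend to zero. Testing the above identity against each $(\Psi^i_{\delta_{j,n},\xi_{j,n}},\Phi^i_{\delta_{j,n},\xi_{j,n}})$ in the inner product $\langle\cdot,\cdot\rangle_h$, using the orthogonality $(\Psi_n,\Phi_n)\in \mathcal{Z}_{\bar{\delta}_n,\bar{\xi}_n}$, and exploiting the block--diagonal asymptotics already computed in \eqref{gu1}--\eqref{gu4} (diagonal blocks are $O(1)$ and off--diagonal blocks are $o(1)$ thanks to $d_g(\xi_{j,n},\xi_{m,n})\geq\varrho_2$ and to the concentration of $(\Psi^l_{\delta,\xi},\Phi^l_{\delta,\xi})$), one reduces to a nearly diagonal linear system for the $c^n_{lm}$ with right hand side tending to $0$; this forces $c^n_{lm}\to 0$. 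The technical care here is in estimating the bilinear form $\langle \mathcal{I}^*(f'_{\varepsilon_n}(\mathcal{H}_{\bar{\delta}_n,\bar{\xi}_n})\Phi_n,g'_{\varepsilon_n}(\mathcal{W}_{\bar{\delta}_n,\bar{\xi}_n})\Psi_n),(\Psi^i_{\delta_{j,n},\xi_{j,n}},\Phi^i_{\delta_{j,n},\xi_{j,n}})\rangle_h$, which one handles by moving the Laplacian onto the bubble derivative and recognizing the kernel equations of Lemma \ref{nonde}.

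Next I would carry out the local blow--up analysis around each concentration point. For each $j\in\{1,\dots,k\}$ define the rescaled pair
\[
\bigl(\widehat{\Psi}_{j,n}(z),\widehat{\Phi}_{j,n}(z)\bigr):=\bigl(\delta_{j,n}^{\frac{N}{q+1}}\Psi_n(\exp_{\xi_{j,n}}(\delta_{j,n}z)),\ \delta_{j,n}^{\frac{N}{p+1}}\Phi_n(\exp_{\xi_{j,n}}(\delta_{j,n}z))\bigr),
\]
for $z\in B(0,r_0/\delta_{j,n})$. The invariances of $\dot{W}^{2,(p+1)/p}\times \dot{W}^{2,(q+1)/q}$ under this scaling, together with the normalization $\|(\Psi_n,\Phi_n)\|=1$, give uniform bounds on $(\widehat{\Psi}_{j,n},\widehat{\Phi}_{j,n})$. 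Passing to the limit, using \eqref{lap1}--\eqref{lap2} so that $\Delta_{g_{\delta_{j,n},\xi_{j,n}}}\to \Delta$, using the pointwise convergence $f'_{\varepsilon_n}(\mathcal{H}_{\bar{\delta}_n,\bar{\xi}_n})\to pV_{1,0}^{p-1}$ and $g'_{\varepsilon_n}(\mathcal{W}_{\bar{\delta}_n,\bar{\xi}_n})\to qU_{1,0}^{q-1}$ after rescaling (the contributions from other bubbles and from the $h\Psi$, $h\Phi$ terms are lower order by the distance assumption and by the factor $\delta_{j,n}^2$), and using the step above to discard the Lagrange multiplier contribution, the limit $(\widehat{\Psi}_j,\widehat{\Phi}_j)\in\dot{W}^{2,(p+1)/p}(\mathbb{R}^N)\times\dot{W}^{2,(q+1)/q}(\mathbb{R}^N)$ satisfies the linearized Lane--Emden system of Lemma \ref{nonde}. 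By non--degeneracy, $(\widehat{\Psi}_j,\widehat{\Phi}_j)=\sum_{i=0}^{N}a_i^{(j)}(\Psi_{1,0}^i,\Phi_{1,0}^i)$.

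Finally I would use the orthogonality conditions defining $\mathcal{Z}_{\bar{\delta}_n,\bar{\xi}_n}$ to force every $a_i^{(j)}$ to vanish. Writing each condition $\langle(\Psi_n,\Phi_n),(\Psi^i_{\delta_{j,n},\xi_{j,n}},\Phi^i_{\delta_{j,n},\xi_{j,n}})\rangle_h=0$ in rescaled variables and letting $n\to\infty$, the $h$--terms disappear (they carry a factor $\delta_{j,n}^2$) and the gradient terms converge to
\[
\int_{\mathbb{R}^N}\bigl(\nabla\widehat{\Psi}_j\cdot\nabla\Phi_{1,0}^i+\nabla\widehat{\Phi}_j\cdot\nabla\Psi_{1,0}^i\bigr)\,dz=0,\qquad i=0,1,\dots,N,
\]
which, combined with Lemma \ref{nonde} and the explicit diagonal computation analogous to \eqref{gu1}, yields $a_i^{(j)}=0$ for all $i,j$. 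Hence $(\widehat{\Psi}_{j,n},\widehat{\Phi}_{j,n})\rightharpoonup 0$ for every $j$, which together with a standard compactness argument on the remainder (away from the $k$ bubbles, the operator $\mathrm{Id}-\mathcal{I}^*(f'_{\varepsilon_n}(\mathcal{H})\cdot\ ,g'_{\varepsilon_n}(\mathcal{W})\cdot)$ is a compact perturbation of the identity since the nonlinear coefficients $V_{1,0}^{p-1}$, $U_{1,0}^{q-1}$ decay at infinity) upgrades weak to strong convergence and yields $\|(\Psi_n,\Phi_n)\|\to 0$, contradicting $\|(\Psi_n,\Phi_n)\|=1$.

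The main obstacle is the final step: passing from weak convergence of the rescaled bubbles to strong convergence of $(\Psi_n,\Phi_n)$ in the global norm $\|\cdot\|$, because $\mathcal{X}_{p,q}(\mathcal{M})$ has a mixed $L^{(p+1)/p}\times L^{(q+1)/q}$ Calder\'on--Zygmund structure and the critical case $p=\frac{N+2}{N-2}$ (plus the non--symmetric cases $p<\frac{N}{N-2}$ and $p>\frac{N}{N-2}$) requires the full strength of the decay estimates of Lemmas \ref{jian1}--\ref{jian3} to control the profile of $\Psi_n$ versus $\Phi_n$ separately; this is precisely where the dimensional dichotomy $N\geq 8,10,12$ of Theorem \ref{th} enters to keep the remainder under control.
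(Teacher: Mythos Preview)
Your proposal is correct and follows essentially the same contradiction/blow-up strategy as the paper: the paper merely reverses the order of your first two steps (it first passes the $\mathcal{Z}$-orthogonality to the limit to obtain $\int_{\mathbb{R}^N}\bigl(pV_{1,0}^{p-1}\Phi_{1,0}^i\widehat{\Phi}_j+qU_{1,0}^{q-1}\Psi_{1,0}^i\widehat{\Psi}_j\bigr)\,dz=0$, and \emph{then} uses this identity to show $c^n_{lm}\to 0$), and for the final step it carries out an explicit H\"older estimate on $\|f'_{\varepsilon_n}(\mathcal{H}_{\bar{\delta}_n,\bar{\xi}_n})\Phi_n\|_{(p+1)/p}$ and $\|g'_{\varepsilon_n}(\mathcal{W}_{\bar{\delta}_n,\bar{\xi}_n})\Psi_n\|_{(q+1)/q}$ (splitting $B(0,R)$ from its complement and using the local strong convergence $\widehat{\Phi}_{j,n},\widehat{\Psi}_{j,n}\to 0$ together with the decay of $U_{1,0},V_{1,0}$) rather than an abstract compactness argument. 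One small correction: the dimensional dichotomy $N\geq 8,10,12$ does \emph{not} enter in this lemma---the tail exponents in the paper's Step~4 are positive already under $p>\tfrac{2}{N-2}$---and those restrictions are needed only in the error estimate (Lemma~\ref{error}) and in the energy expansion (Lemma~\ref{e5.1}).
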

\begin{proof}
We assume by contradiction that there exist a sequence $\varepsilon_n\rightarrow 0$ as $n\rightarrow+\infty$, $(\bar{\delta_n},\bar{\xi_n})\in \Lambda$, $\bar{t_n}=(t_{1n},t_{2n},\cdots,t_{kn})\in (\mathbb{R}^+)^k$, $\bar{\xi_n}=(\xi_{1n},\xi_{2n},\cdots,\xi_{kn})\in \mathcal{M}^k$, and a sequence of functions $(\Psi_n,\Phi_n)\in\mathcal{Z}_{\bar{\delta_n},\bar{\xi_n}}$ such that
\begin{equation*}
  \|(\Psi_n,\Phi_n)\|=1,\quad \|\mathcal{L}_{\varepsilon_n,\bar{t_n},\bar{\xi_n}}(\Psi_n,\Phi_n)\|\rightarrow0, \quad \text{as $n\rightarrow+\infty$}.
\end{equation*}
Then $\|\Psi_n\|_{{q+1}}\leq C$ and $\|\Phi_n\|_{{p+1}}\leq C$.

{\bf Step 1:}
For any $n\in \mathbb{N}^+$ and $j=1,2,\cdots,k$, let
\begin{equation*}
  (\widetilde{\Psi}_n(x),\widetilde{\Phi}_n(x))=\big(\chi(\delta_{jn}|x|)\delta_{jn}^{\frac{N}{q+1}}\Psi_n(\exp _{\xi_{jn}}(\delta_{jn} x)),\chi(\delta_{jn}|x|)\delta_{jn}^{\frac{N}{p+1}}\Phi_n(\exp _{\xi_{jn}}(\delta_{jn} x))\big),
\end{equation*}
where $\chi$ is a cutoff function as in \eqref{fun1}. A direct computations shows 
  \begin{align*}
  \big\|\Delta \widetilde{\Psi}_n\big\|^\frac{p+1}{p}_{L^{\frac{p+1}{p}}(\mathbb{R}^N)}&\leq \int\limits_{B(0,r_0/\delta_{jn})}| \delta_{jn}^{\frac{N}{q+1}}\Delta\Psi_n(\exp _{\xi_{jn}}(\delta_{jn} x))|^{\frac{p+1}{p}}dx=\int\limits_{B(0,r_0)}\delta_{jn}^{-N}|\delta_{jn}^{2+\frac{N}{q+1}}\Delta\Psi_n(\exp _{\xi_{jn}}(y))|^{\frac{p+1}{p}}dy\\
  &=\int\limits_{B_g(\xi_{jn},r_0)}|\Delta_g\Psi_n|^{\frac{p+1}{p}}d v_g=\int\limits_{\mathcal{M}}|\Delta_g\Psi_n|^{\frac{p+1}{p}}d v_g\leq C,
\end{align*}
and
\begin{align*}
  \big\|\Delta \widetilde{\Phi}_n\big\|^\frac{q+1}{q}_{L^{\frac{p+1}{p}}(\mathbb{R}^N)}&\leq \int\limits_{B(0,r_0/\delta_{jn})}| \delta_{jn}^{\frac{N}{p+1}}\Delta\Phi_n(\exp _{\xi_{jn}}(\delta_{jn} x))|^{\frac{q+1}{q}}dx=\int\limits_{B(0,r_0)}\delta_{jn}^{-N}|\delta_{jn}^{2+\frac{N}{p+1}}\Delta\Phi_n(\exp _{\xi_{jn}}(y))|^{\frac{q+1}{q}}dy\\
  &=\int\limits_{B_g(\xi_{jn},r_0)}|\Delta_g\Phi_n|^{\frac{q+1}{q}}d v_g=\int\limits_{\mathcal{M}}|\Delta_g\Phi_n|^{\frac{q+1}{q}}d v_g\leq C.
\end{align*}
Hence, $(\widetilde{\Psi}_n,\widetilde{\Phi}_n)$ is bounded in $ \mathcal{X}_{p,q}(\mathbb{R}^N)$.
Up to a subsequence, there exists $(\widetilde{\Psi},\widetilde{\Phi})\in \mathcal{X}_{p,q}(\mathbb{R}^N)$ such that $(\widetilde{\Psi}_n,\widetilde{\Phi}_n)\rightharpoonup (\widetilde{\Psi},\widetilde{\Phi})$ in $\mathcal{X}_{p,q}(\mathbb{R}^N)$, $(\widetilde{\Psi}_n,\widetilde{\Phi}_n)\rightarrow (\widetilde{\Psi},\widetilde{\Phi})$ in $L_{loc}^{s}(\mathbb{R}^N)\times L_{loc}^{t}(\mathbb{R}^N)$ for any $(s,t)\in [1,q+1]\times[1,p+1]$, and $(\widetilde{\Psi}_n,\widetilde{\Phi}_n)\rightarrow (\widetilde{\Psi},\widetilde{\Phi})$ almost everywhere in $\mathbb{R}^N$.
   For convenience, we denote $(P_n,K_n)=\mathcal{L}_{\varepsilon_n,\bar{t_n},\bar{\xi_n}}(\Psi_n,\Phi_n)$. Furthermore, by $(P_n,K_n) \in \mathcal{Z}_{\bar{\delta_{n}},\bar{\xi_n}}$, there exist $c_{1n}^0,c_{1n}^1,\cdots,c_{1n}^N$, $c_{2n}^0,c_{2n}^1,\cdots,c_{2n}^N$, $\cdots$, $c_{kn}^0,c_{kn}^1,\cdots,c_{kn}^N$ such that
  \begin{equation}\label{line1}
  (\Psi_n,\Phi_n)-\mathcal{I}^*\big(f'_{\varepsilon_n}(\mathcal{H}_{\bar{\delta_n},\bar{\xi_n}})\Phi_n,g'_{\varepsilon_n}(\mathcal{W}_{\bar{\delta_n},\bar{\xi_n}})\Psi_n\big)=(P_n,K_n)+
  \sum\limits
  _{l=0}^N\sum\limits
  _{m=1}^kc_{mn}^l(\Psi_{\delta_{mn},\xi_{mn}}^l,\Phi_{\delta_{mn},\xi_{mn}}^l),
  \end{equation}
  which also reads
  \begin{align}\label{line1'}
 \left\{
  \begin{array}{ll}
  -\Delta\Psi_n=f'_{\varepsilon_n}(\mathcal{H}_{\bar{\delta_n},\bar{\xi_n}})\Phi_n-\Delta P_n-\sum\limits
  _{l=0}^N\sum\limits
  _{m=1}^kc_{mn}^l\Delta\Psi_{\delta_{mn},\xi_{mn}}^l,
    \quad\mbox{in $\mathbb{R}^N$},\\
   -\Delta\Phi_n=g'_{\varepsilon_n}(\mathcal{W}_{\bar{\delta_n},\bar{\xi_n}})\Psi_n-\Delta K_n-\sum\limits
  _{l=0}^N\sum\limits
  _{m=1}^kc_{mn}^l\Delta\Phi_{\delta_{mn},\xi_{mn}}^l,
   \quad\mbox{in $\mathbb{R}^N$}.
    \end{array}
    \right.
  \end{align}
  Using $(\Psi_n,\Phi_n) \in \mathcal{Z}_{\bar{\delta_{n}},\bar{\xi_n}}$ again, by an easy change of variable, for $i=0,1,\cdots,N$ and $j=1,2\cdots,k$, we have
  \begin{align*}
    0=&\int\limits_{\mathcal{M}} (\nabla _g \Psi_n\cdot \nabla _g \Phi _{\delta_{jn},\xi_{jn}}^i+\nabla _g \Phi_n\cdot \nabla _{g} \Psi _{\delta_{jn},\xi_{jn}}^i+h\Psi_n \Phi _{\delta_{jn},\xi_{jn}}^i+h \Phi_n \Psi _{\delta_{jn},\xi_{jn}}^i)    d v_g\\
    =&\int\limits_{B(0,r_0/\delta_{jn})} \Big[\delta_{jn}^{N-2-\frac{N}{p+1}}\nabla _{g_n} \Psi_n(\exp_{\xi_{jn}}(\delta_{jn} z))\cdot \nabla _{g_n} (\chi_n\Phi _{1,0}^i)+\delta_{jn}^{N-2-\frac{N}{q+1}}\nabla _{g_n} \Phi_n(\exp_{\xi_{jn}}( \delta_{jn} z))\cdot \nabla _{g_n} (\chi_n\Psi _{1,0}^i)\\
    &+\delta_{jn}^{N-\frac{N}{p+1}}h(\exp_{\xi_{jn}}( \delta_{jn} z))\Psi_n (\exp_{\xi_{jn}}( \delta_{jn} z)) \chi_n\Phi _{1,0}^i+\delta_{jn}^{N-\frac{N}{q+1}}h(\exp_{\xi_{jn}}( \delta_{jn} z)) \Phi_n(\exp_{\xi_{jn}}( \delta_{jn} z)) \chi_n\Psi _{1,0}^i\Big]    dz\\
    =&\int\limits_{B(0,r_0/\delta_{jn})} \big[\nabla _{g_n} \widetilde{\Psi}_n\cdot \nabla _{g_n} (\chi_n\Phi _{1,0}^i)+\nabla _{g_n} \widetilde{\Phi}_n\cdot \nabla _{g_n} (\chi_n \Psi _{1,0}^i)+\delta_{jn}^2h_n\widetilde{\Psi}_n \Phi _{1,0}^i+\delta_{jn}^2h_n \widetilde{\Phi}_n \Psi _{1,0}^i\big]    dz,
  \end{align*}
  where $g_{n}(z)=\exp_{\xi_{jn}}^*g(\delta_{jn}z)$, $\chi_{n}(z)=\chi({\delta_{jn}|z|})$ and $h_{n}(z)=h(\exp_{\xi_{jn}}(\delta_{jn}z))$. By Lemma \ref{nonde}, passing to the limit for the above equality, we obtain
  \begin{equation}\label{line2}
  \int\limits_{\mathbb{R}^N}\big(pV_{1,0}^{p-1}\Phi_{1,0}^i\widetilde{\Phi}+qU_{1,0}^{q-1}\Psi_{1,0}^i\widetilde{\Psi}\big)dz=  \int\limits_{\mathbb{R}^N} \big(\nabla  \widetilde{\Psi}\cdot \nabla \Phi _{1,0}^i+\nabla  \widetilde{\Phi}\cdot \nabla  \Psi _{1,0}^i\big)dz=0.
  \end{equation}

  {\bf Step 2:} For any $l=0,1,\cdots,N$ and $m=1,2,\cdots,k$, $c_{mn}^l\rightarrow 0$ as $n\rightarrow \infty$. For any $n\in \mathbb{N}^+$, since $(\Psi_n,\Phi_n)$ and $ (P_n,K_n)$ belong to $ \mathcal{Z}_{\bar{\delta_{n}},\bar{\xi_n}}$, multiplying \eqref{line1} by $(\Psi_{\delta_{jn},\xi_{jn}}^i,\Phi_{\delta_{jn},\xi_{jn}}^i)$, $0\leq i\leq N$, $1\leq j\leq k$, using \eqref{gu1}-\eqref{gu4}, we have
  \begin{align}\label{line3}
    & -\int\limits _{\mathcal{M}} \big(f'_{\varepsilon_n}(\mathcal{H}_{\bar{\delta_n},\bar{\xi_n}})\Phi_n \Phi_{\delta_{jn},\xi_{jn}}^i+g'_{\varepsilon_n}(\mathcal{W}_{\bar{\delta_n},\bar{\xi_n}})\Psi_n \Psi_{\delta_{jn},\xi_{jn}}^i\big)d v_g\nonumber\\
    =&\sum\limits_{l=0}^N\sum\limits_{m=1}^kc_{mn}^l\delta_{il}\delta_{jm}\int\limits_{B(0,r_0/{\delta_{jn}})}\big(p\chi^2_n V_{1,0}^{p-1}(\Phi_{1,0}^i)^2+q\chi^2_n U_{1,0}^{q-1}(\Psi_{1,0}^i)^2
  \big)dz+O(\varepsilon_n).
  \end{align}
  Moreover, by \eqref{line2}, we have
  \begin{align}\label{line4}
    &\int\limits _{\mathcal{M}} \big(f'_{\varepsilon_n}(\mathcal{H}_{\bar{\delta_n},\bar{\xi_n}})\Phi_n \Phi_{\delta_{jn},\xi_{jn}}^i+g'_{\varepsilon_n}(\mathcal{W}_{\bar{\delta_n},\bar{\xi_n}})\Psi_n \Psi_{\delta_{jn},\xi_{jn}}^i\big)d v_g \nonumber\\
    =&\int\limits _{\mathcal{M}} \big((p-\alpha{\varepsilon_n})\mathcal{H}_{\bar{\delta_{n}},\bar{\xi_{n}}}^{p-1-\alpha{\varepsilon_n}}\Phi_n \Phi_{\delta_{jn},\xi_{jn}}^i+(q-\beta{\varepsilon_n})\mathcal{W}_{\bar{\delta_{n}},\bar{\xi_{n}}}^{q-1-\beta{\varepsilon_n}}\Psi_n \Psi_{\delta_{jn},\xi_{jn}}^i\big)d v_g\nonumber\\
    =&\sum\limits_{m=1}^k\int\limits _{\mathcal{M}} \big((p-\alpha{\varepsilon_n})H_{\delta_{mn},\xi_{mn}}^{p-1-\alpha{\varepsilon_n}}\Phi_n \Phi_{\delta_{jn},\xi_{jn}}^i+(q-\beta{\varepsilon_n})W_{\delta_{mn},\xi_{mn}}^{q-1-\beta{\varepsilon_n}}\Psi_n \Psi_{\delta_{jn},\xi_{jn}}^i\big)d v_g\nonumber\\
    =&\delta_{jm}\int\limits _{B(0,r_0/\delta_{jn})} \Big[(p-\alpha{\varepsilon_n})\delta_{jn}^{N-\frac{N(p-\alpha{\varepsilon_n})}{p+1}-\frac{N}{p+1}}(\chi_nV_{1,0})^{p-1-\alpha{\varepsilon_n}}\chi_n\delta_{jn}^
    {\frac{N}{p+1}}\Phi_n(\exp_{\xi_{jn}}(\delta_{jn} z)) \Phi_{1,0}^i\nonumber\\
    &+(q-\beta{\varepsilon_n})\delta_{jn}^{N-\frac{N(q-\beta{\varepsilon_n})}{q+1}-\frac{N}{q+1}}(\chi_nU_{1,0})^{q-1-\beta{\varepsilon_n}}\chi_n\delta_{jn}^{\frac{N}{q+1}}\Psi_n(\exp_{\xi_{jn}}(\delta_{jn} z)) \Psi_{1,0}^i\Big]dz\nonumber\\
    =&\delta_{jm}\int\limits _{B(0,r_0/\delta_{jn})} \Big[(p-\alpha{\varepsilon_n})\delta_{jn}^{N-\frac{N(p-\alpha{\varepsilon_n})}{p+1}-\frac{N}{p+1}}(\chi_nV_{1,0})^{p-1-\alpha{\varepsilon_n}}
    \widetilde{\Phi}_n(z) \Phi_{1,0}^i\nonumber\\
    &+(q-\beta{\varepsilon_n})\delta_{jn}^{N-\frac{N(q-\beta{\varepsilon_n})}{q+1}-\frac{N}{q+1}}(\chi_nU_{1,0})^{q-1-\beta{\varepsilon_n}}
    \widetilde{\Psi}_n(z) \Psi_{1,0}^i\Big]dz
    \nonumber
    \\ \rightarrow &\delta_{jm}\int\limits_{\mathbb{R}^N}(pV_{1,0}^{p-1}\Phi_{1,0}^i\widetilde{\Phi}+qU_{1,0}^{q-1}\Psi_{1,0}^i\widetilde{\Psi})dz=0,\quad \text{as $n\rightarrow+\infty$}.
  \end{align}
It follows from \eqref{line3} and \eqref{line4} that for any $l=0,1,\cdots,N$ and $m=1,2,\cdots,k$, $c_{mn}^l\rightarrow 0$ as $n\rightarrow \infty$.

  {\bf Step 3:} $(\widetilde{\Psi},\widetilde{\Phi})=(0,0)$. For any $j=1,2,\cdots,k$, there hold
  \begin{align*}
    \Delta \widetilde{\Psi}_n=&\delta_{jn}^{\frac{Np}{p+1}}\big[\chi_n\Delta  \Psi_n(\exp _{\xi_{jn}}(\delta_{jn} z))+\nabla \chi_n\cdot \nabla \Psi_n(\exp _{\xi_{jn}}(\delta_{jn} x))+\Psi_n(\exp _{\xi_{jn}}(\delta_{jn} z))\Delta \chi_n\big],
\end{align*}
and
\begin{align*}
    \Delta  \widetilde{\Phi}_n=&\delta_{jn}^{\frac{Nq}{q+1}}\big[\chi_n \Delta  \Phi_n(\exp _{\xi_{jn}}(\delta_{jn} z))+\nabla \chi_n \cdot\nabla \Phi_n(\exp _{\xi_{jn}}(\delta_{jn} x))+\Phi_n(\exp _{\xi_{jn}}(\delta_{jn} z))\Delta \chi_n \big].
\end{align*}
Thus we obtain a system of equations satisfied by $(\widetilde{\Psi}_n,\widetilde{\Phi}_n)$. For any $(\varphi,\psi)\in C_0^\infty(\mathbb{R}^N)\times C_0^\infty(\mathbb{R}^N)$ and $j=1,2,\cdots,k$, by the dominated convergence theorem, we obtain
\begin{equation*}
  \lim\limits_{n\rightarrow+\infty}(p-\alpha\varepsilon)\delta_{jn}^{\frac{Np}{p+1}}\int\limits_{\{z\in \mathbb{R}^N :\varphi(z)\neq0 \}}\big(\chi_n\delta_{jn}^{-\frac{N}{p+1}}V_{1,0}\big)^{p-1-\alpha\varepsilon}\chi_n\Phi_n(\exp_{\xi_{jn}}(\delta_{jn}z))\varphi dz= p\int\limits_{\{x\in \mathbb{R}^N :\varphi(z)\neq0 \}}V_{1,0}^{p-1}\widetilde{\Phi}\varphi dz,
\end{equation*}
and
\begin{equation*}
  \lim\limits_{n\rightarrow+\infty}(q-\beta\varepsilon)\delta_{jn}^{\frac{Nq}{q+1}}\int\limits_{\{z\in \mathbb{R}^N :\psi(z)\neq0 \}}\big(\chi_n\delta_{jn}^{-\frac{N}{q+1}}U_{1,0}\big)^{q-1-\beta\varepsilon}\chi_n\Psi_n(\exp_{\xi_{jn}}(\delta_{jn}z))\psi dz= q\int\limits_{\{z\in \mathbb{R}^N :\psi(z)\neq0 \}}U_{1,0}^{q-1}\widetilde{\Psi}\psi dz.
\end{equation*}
Using \eqref{line1'}, $\|(P_n,K_n)\|\rightarrow0$, $c_{mn}^l\rightarrow 0$ as $n\rightarrow \infty$ for any $l=0,1,\cdots,N$ and $m=1,2,\cdots,k$, we deduce that
$(\widetilde{\Psi},\widetilde{\Phi}) $ satisfies
  \begin{align*}
 \left\{
  \begin{array}{ll}
  -\Delta\widetilde{\Psi}=pV_{1,0}^{p-1}\widetilde{\Phi},
    \quad\mbox{in $\mathbb{R}^N$},\\
   -\Delta\widetilde{\Phi}=qU_{1,0}^{q-1}\widetilde{\Psi},
   \quad\mbox{in $\mathbb{R}^N$}.
    \end{array}
    \right.
  \end{align*}
  This together with \eqref{line2} and Lemma \ref{nonde} yields that $(\widetilde{\Psi},\widetilde{\Phi})=(0,0)$.

  {\bf Step 4:}
  $\|\mathcal{I}^*\big(f'_{\varepsilon_n}(\mathcal{H}_{\bar{\delta_n},\bar{\xi_n}})\Phi_n,g'_{\varepsilon_n}(\mathcal{W}_{\bar{\delta_n},\bar{\xi_n}})\Psi_n\big)\|\rightarrow 0$ as $n\rightarrow \infty$. By \eqref{em}, we know
  \begin{equation*}
    \|\mathcal{I}^*\big(f'_{\varepsilon_n}(\mathcal{H}_{\bar{\delta_n},\bar{\xi_n}})\Phi_n,g'_{\varepsilon_n}(\mathcal{W}_{\bar{\delta_n},\bar{\xi_n}})\Psi_n\big)\|\leq C\big\|f'_{\varepsilon_n}(\mathcal{H}_{\bar{\delta_n},\bar{\xi_n}})\Phi_n\big\|_{\frac{p+1}{p}}+C\big\|g'_{\varepsilon_n}(\mathcal{W}_{\bar{\delta_n},\bar{\xi_n}})\Psi_n\big\|_{\frac{q+1}{q}}.
  \end{equation*}
  For any fixed $R>0$ and $j=1,2,\cdots,k$, by the H\"{o}lder inequality, $\widetilde{\Phi}_n\rightarrow0$ in $L_{loc}^{\frac{p+1}{1+\alpha{\varepsilon_n}}}({\mathbb{R}^N})$ and $\widetilde{\Psi}_n\rightarrow0$ in $L_{loc}^{\frac{q+1}{1+\beta{\varepsilon_n}}}({\mathbb{R}^N})$, we have
  \begin{align*}
   & \big\|f'_{\varepsilon_n}(\mathcal{H}_{\bar{\delta_n},\bar{\xi_n}})\Phi_n\big\|_{\frac{p+1}{p}}^{\frac{p+1}{p}}\\
    =&\int\limits_{\mathcal{M}}\big|(p-\alpha{\varepsilon_n})\mathcal{H}_{\bar{\delta_n},\bar{\xi_n}}^{p-1-\alpha{\varepsilon_n}}\Phi_n\big|^{\frac{p+1}{p}}d v_g\\
    =&\sum\limits_{j=1}^k
    \delta_{jn}^{\frac{N \alpha {\varepsilon_n}}{p}}\int\limits_{B(0,r_0/\delta_{jn})}\big|(p-\alpha{\varepsilon_n})\chi_n^{p-2-\alpha\varepsilon}V_{1,0}^{p-1-\alpha{\varepsilon_n}} \chi_n\delta_{jn}^{\frac{N}{p+1}}\Phi_n(\exp_{\xi_{jn}}(\delta_{jn} z))\big|^{\frac{p+1}{p}}dz\\
    =&\sum\limits_{j=1}^k\delta_{jn}^{\frac{N \alpha {\varepsilon_n}}{p}}\int\limits_{B(0,r_0/\delta_{jn})}\big|(p-\alpha{\varepsilon_n})\chi_n^{p-2-\alpha\varepsilon}V_{1,0}^{p-1-\alpha{\varepsilon_n}} \widetilde{\Phi}_n(z)\big|^{\frac{p+1}{p}}dz\\
    \leq & C\Big(\int\limits_{B(0,r_0/\delta_{jn})}V_{1,0} ^{p+1}dz\Big)^{\frac{p-1-\alpha{\varepsilon_n}}{p}}\Big(\int\limits_{B(0,r_0/\delta_{jn})}|\widetilde{\Phi}_n(z)|^{\frac{p+1}{1+\alpha{\varepsilon_n}}}dz\Big)^{\frac{1+\alpha{\varepsilon_n}}{p}}\\
    \leq &C\Big(\int\limits_{B(0,R)}|\widetilde{\Phi}_n(z)|^{\frac{p+1}{1+\alpha{\varepsilon_n}}}\Big)^{\frac{1+\alpha{\varepsilon_n}}{p}}
    +C\varepsilon_n^{\frac{[(N-2)p-2](p-1-\alpha\varepsilon_n)}{2p}}\rightarrow0,\quad \text{as $n\rightarrow+\infty$},
  \end{align*}
  and
  \begin{align*}
   & \big\|g'_{\varepsilon_n}(\mathcal{W}_{\bar{\delta_n},\bar{\xi_n}})\Psi_n\big\|_{\frac{q+1}{q}}^{\frac{q+1}{q}}\\
    =&\int\limits_{\mathcal{M}}\big|(q-\beta{\varepsilon_n})\mathcal{W}_{\bar{\delta_n},\bar{\xi_n}}^{q-1-\beta{\varepsilon_n}}\Psi_n\big|^{\frac{q+1}{q}}d v_g\\
    =&\sum\limits_{j=1}^k
    \delta_{jn}^{\frac{N \beta {\varepsilon_n}}{q}}\int\limits_{B(0,r_0/\delta_{jn})}\big|(q-\beta{\varepsilon_n})\chi_n^{q-2-\beta\varepsilon}U_{1,0}^{q-1-\beta{\varepsilon_n}} \chi_n\delta_{jn}^{\frac{N}{q+1}}\Psi_n(\exp_{\xi_{jn}}(\delta_{jn} z))\big|^{\frac{q+1}{q}}dz\\
    =&\sum\limits_{j=1}^k\delta_{jn}^{\frac{N \beta {\varepsilon_n}}{q}}\int\limits_{B(0,r_0/\delta_{jn})}\big|(q-\beta{\varepsilon_n})\chi_n^{q-2-\beta\varepsilon}U_{1,0}^{q-1-\beta{\varepsilon_n}} \widetilde{\Psi}_n(z)\big|^{\frac{q+1}{q}}dz\\
    \leq & C\Big(\int\limits_{B(0,r_0/\delta_{jn})}U_{1,0} ^{q+1}dz\Big)^{\frac{q-1-\beta{\varepsilon_n}}{q}}\Big(\int\limits_{B(0,r_0/\delta_{jn})}|\widetilde{\Psi}_n(z)|^{\frac{q+1}{1+\beta{\varepsilon_n}}}dz\Big)^
    {\frac{1+\beta{\varepsilon_n}}{q}}\\
    \leq &
    \left\{
    \begin{array}{ll}
    \displaystyle C\Big(\int\limits_{B(0,R)}|\widetilde{\Psi}_n(z)|^{\frac{q+1}{1+\beta{\varepsilon_n}}}dz\Big)^{\frac{1+\beta{\varepsilon_n}}{q}
    }+C\varepsilon_n^{\frac{[(N-2)q-2](q-1-\beta\varepsilon_n)}{2q}},\quad  \text{if}\ p>\frac{N}{N-2},\\
    \displaystyle C\Big(\int\limits_{B(0,R)}|\widetilde{\Psi}_n(z)|^{\frac{q+1}{1+\beta{\varepsilon_n}}}dz\Big)^{\frac{1+\beta{\varepsilon_n}}{q}
    }+C\varepsilon_n^{\frac{[(N-3)q-3](q-1-\beta\varepsilon_n)}{2q}},\quad   \text{if}\ p=\frac{N}{N-2},\\
    \displaystyle C\Big(\int\limits_{B(0,R)}|\widetilde{\Psi}_n(z)|^{\frac{q+1}{1+\beta{\varepsilon_n}}}dz\Big)^{\frac{1+\beta{\varepsilon_n}}{q}
    }+C\varepsilon_n^{\frac{([(N-2)p-2](q+1)-N)(q-1-\beta\varepsilon_n)}{2q}},\quad \text{if}\ p<\frac{N}{N-2}.\\
    \end{array}
    \right.\\
    \to& 0\ \ \ \text{as}\ n\rightarrow+\infty.
  \end{align*}
From the above arguments,  we
  get $\|(\Psi_n,\Phi_n)\|\rightarrow0$ as $n\rightarrow+\infty$, which is an absurd. Thus, we complete the proof.
\end{proof}
For any $\varepsilon>0$ small enough,  $\bar{t}\in (\mathbb{R}^+)^k$, and $\bar{\xi}\in \mathcal{M}^k$, if $\bar{\delta}$ is as in \eqref{solu'}, then equation \eqref{tou2} is equivalent to
\begin{equation*}
  \mathcal{L}_{\varepsilon,\bar{t},\bar{\xi}}(\Psi,\Phi)=\mathcal{N}_{\varepsilon,\bar{t},\bar{\xi}}(\Psi,\Phi)+\mathcal{R}_{\varepsilon,\bar{t},\bar{\xi}},
\end{equation*}
where
\begin{equation}\label{defn}
\mathcal{N}_{\varepsilon,\bar{t},\bar{\xi}}(\Psi,\Phi)=
  \Pi_{\bar{\delta},\bar{\xi}}^\bot \mathcal{I}^* \big[
  f_\varepsilon(\mathcal{H}_{\bar{\delta},\bar{\xi}}+\Phi)-f_\varepsilon(\mathcal{H}_{\bar{\delta},\bar{\xi}})-f'_\varepsilon(\mathcal{H}_{\bar{\delta},\bar{\xi}})\Phi, g_\varepsilon(\mathcal{W}_{\bar{\delta},\bar{\xi}}+\Psi)-g_\varepsilon(\mathcal{W}_{\bar{\delta},\bar{\xi}})-g'_\varepsilon(\mathcal{W}_{\bar{\delta},\bar{\xi}})\Psi
  \big],
\end{equation}
and
\begin{equation}\label{defr}
\mathcal{R}_{\varepsilon,\bar{t},\bar{\xi}}=
  \Pi_{\bar{\delta},\bar{\xi}}^\bot\Big[
  \mathcal{I}^*\big(f_\varepsilon(\mathcal{H}_{\bar{\delta},\bar{\xi}}),g_\varepsilon(\mathcal{W}_{\bar{\delta},\bar{\xi}})\big)-(\mathcal{W}_{\bar{\delta},\bar{\xi}},
  \mathcal{H}_{\bar{\delta},\bar{\xi}})
  \Big].
\end{equation}
In the following lemma, we estimate the reminder term $\mathcal{R}_{\varepsilon,\bar{t},\bar{\xi}}$.
\begin{lemma}\label{error}
Under the assumptions on $p,q$ and $N$ of Theorem \ref{th}, if  $(\bar{\delta},\bar{\xi})\in \Lambda$ and $\bar{\delta}$ is as in \eqref{solu'}, then for any $\varepsilon>0$ small enough, there holds
\begin{equation*}
  \|\mathcal{R}_{\varepsilon,\bar{t},\bar{\xi}}\|\leq C\varepsilon|\log \varepsilon|,
\end{equation*}
where $\mathcal{R}_{\varepsilon,\bar{t},\bar{\xi}}$ is as in \eqref{defr}.
\end{lemma}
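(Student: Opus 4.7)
The plan is to strip off the projection and reduce everything to a Calder\'on--Zygmund-type estimate of the defect. Since $\Pi^\bot_{\bar\delta,\bar\xi}$ is a continuous projection with norm uniformly bounded in $\varepsilon$ (a consequence of Lemma \ref{topo}: the proof there displays the matrix associated to the decomposition as block-diagonal up to $o(1)$ corrections, hence invertible with controlled inverse), and since $(u,v)=\mathcal{I}^*(A,B)$ solves $-\Delta_g u+hu=A$ and $-\Delta_g v+hv=B$, the inequality \eqref{em} reduces the claim to proving
\begin{equation*}
\bigl\|f_\varepsilon(\mathcal{H}_{\bar\delta,\bar\xi})-(-\Delta_g+h)\mathcal{W}_{\bar\delta,\bar\xi}\bigr\|_{\frac{p+1}{p}}+\bigl\|g_\varepsilon(\mathcal{W}_{\bar\delta,\bar\xi})-(-\Delta_g+h)\mathcal{H}_{\bar\delta,\bar\xi}\bigr\|_{\frac{q+1}{q}}\le C\varepsilon|\log\varepsilon|.
\end{equation*}
The separation $d_g(\xi_j,\xi_m)>2r_0$ built into $\Lambda$ forces the bubble cutoffs to have pairwise disjoint supports, so the estimate splits into $k$ bubble-by-bubble contributions that may be handled identically.

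For a fixed index $j$, I would change variables via $y=\exp_{\xi_j}^{-1}(x)$ and rescale $z=\delta_j^{-1}y$; using \eqref{lap1}--\eqref{lap2} together with the identity $\tfrac{Np}{p+1}=2+\tfrac{N}{q+1}$ derived from \eqref{ch}, the leading term in $-\Delta_g W_{\delta_j,\xi_j}$ matches exactly $H_{\delta_j,\xi_j}^p$ via the Lane--Emden relation $-\Delta U_{1,0}=V_{1,0}^p$. What remains is a sum of four types of residuals: the \emph{nonlinearity perturbation} $H_{\delta_j,\xi_j}^{p-\alpha\varepsilon}-H_{\delta_j,\xi_j}^p$, controlled by the elementary estimate $|a^{p-\alpha\varepsilon}-a^p|\le C\varepsilon\,a^p|\log a|$; the \emph{metric discrepancy} of size $O(\delta_j^2|z|^2)\partial^2 U_{1,0}+O(\delta_j^2|z|)\partial U_{1,0}$ coming from \eqref{lap1}; the \emph{potential term} $hW_{\delta_j,\xi_j}$; and the \emph{cutoff-derivative terms} supported in the annulus $\{r_0/(2\delta_j)\le|z|\le r_0/\delta_j\}$. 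After rescaling, the $L^{\frac{p+1}{p}}$-norm of the nonlinearity perturbation expands as $C\varepsilon\bigl(\int V_{1,0}^{p+1}\bigl|\log(\delta_j^{-N/(p+1)}V_{1,0})\bigr|^{\frac{p+1}{p}}dz\bigr)^{\frac{p}{p+1}}$, and the dominant contribution $|\log\delta_j|=O(|\log\varepsilon|)$ produces exactly the advertised $O(\varepsilon|\log\varepsilon|)$ order. The metric and potential residuals rescale to $O(\delta_j^2)=O(\varepsilon)$, while the cutoff pieces are $o(\varepsilon)$ by the tail decay recorded in Lemmas \ref{jian1}--\ref{jian3}. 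A symmetric analysis applies to the second component, with $U$ and $V$, $p$ and $q$, $\alpha$ and $\beta$ interchanged.

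The main technical obstacle will be verifying the $L^{\frac{p+1}{p}}$- and $L^{\frac{q+1}{q}}$-integrability of all the rescaled integrands, which is nontrivial because $U_{1,0}$ may decay as slowly as $|z|^{2-(N-2)p}$ when $p<N/(N-2)$ by Lemma \ref{jian1}, with analogous slow decay of its derivatives from Lemmas \ref{jian2}--\ref{jian3}. This is precisely where the dimensional thresholds in cases (i)--(iii) of Theorem \ref{th} enter: they are tailored so that the integrals $L_2,L_4,\ldots,L_7$ of \eqref{chang}, together with their logarithmic analogues arising in the nonlinearity-perturbation step, are finite. Summing the four residuals over $j=1,\ldots,k$ then yields $\|\mathcal{R}_{\varepsilon,\bar t,\bar\xi}\|\le C\varepsilon|\log\varepsilon|$.
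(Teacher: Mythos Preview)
Your proposal is correct and follows essentially the same approach as the paper's proof. The paper organizes the single-bubble defect into six explicit pieces $A_1,\ldots,A_6$ (and symmetrically $B_1,\ldots,B_6$ on the $q$-side), in particular splitting your ``nonlinearity perturbation'' into $A_1$ (the $V_{1,0}^{p-\alpha\varepsilon}-V_{1,0}^p$ difference, contributing $O(\varepsilon)$) and $A_2$ (the scaling factor $\delta_j^{N\alpha\varepsilon/(p+1)}-1$ together with the cutoff discrepancy $\chi_{\delta_j}^{p-\alpha\varepsilon}-\chi_{\delta_j}$, producing the leading $O(\varepsilon|\log\varepsilon|)$), but the identification of the dominant term and the role of the dimensional hypotheses (i)--(iii) for integrability on the $q$-side is exactly as you describe.
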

\begin{proof}
By \eqref{em}, we know there exists $C>0$ such that for $\varepsilon>0$ small enough, $\bar{t}\in (\mathbb{R}^+)^k$, and $\bar{\xi}\in \mathcal{M}^k$, there holds
\begin{align*}
  \|\mathcal{R}_{\varepsilon,\bar{t},\bar{\xi}}\|\leq & C\big\|f_\varepsilon(\mathcal{H}_{\bar{\delta},\bar{\xi}})+\Delta_g \mathcal{W}_{\bar{\delta},\bar{\xi}}-h\mathcal{W}_{\bar{\delta},\bar{\xi}}\big\|_{{\frac{p+1}{p}}}
  +C\big\|g_\varepsilon(\mathcal{W}_{\bar{\delta},\bar{\xi}})+\Delta_g \mathcal{H}_{\bar{\delta},\bar{\xi}}-h\mathcal{H}_{\bar{\delta},\bar{\xi}}\big\|_{{\frac{q+1}{q}}}\\
  =&C\sum\limits_{j=1}^k\big\|f_\varepsilon(H_{\delta_j,\xi_j})+\Delta_g W_{\delta_j,\xi_j}-hW_{\delta_j,\xi_j}\big\|_{{\frac{p+1}{p}}}
  +C\sum\limits_{j=1}^k\big\|g_\varepsilon(W_{\delta_j,\xi_j})+\Delta_g H_{\delta_j,\xi_j}-hH_{\delta_j,\xi_j}\big\|_{{\frac{q+1}{q}}}\\
  =&:C\sum\limits_{j=1}^k(I_j+II_j).
\end{align*}
By an easy change of variable, and using Lemma \ref{nonde}, for any $j=1,2,\cdots,k$, we have
\begin{align*}
  I_j^{\frac{p+1}{p}}\leq & C \int\limits_{B(0,r_0/\delta_j)}\big|\delta_j^{\frac{N\alpha \varepsilon}{p+1}}\chi_{\delta_j}^{p-\alpha \varepsilon}V_{1,0}^{p-\alpha \varepsilon}\big|^{\frac{p+1}{p}} d z+C \int\limits_{B(0,r_0/\delta_j)}\big|\chi_{\delta_j}\Delta _{g_{\delta_j,\xi_j}}U_{1,0}\big|^{\frac{p+1}{p}} d z\\
  &+C \int\limits_{B(0,r_0/\delta_j)}\big|\delta^2_jU_{1,0}\Delta _{g_{\delta_j,\xi_j}}\chi_{\delta_j}\big|^{\frac{p+1}{p}} d z+C \int\limits_{B(0,r_0/\delta_j)}\big|\delta_j\nabla_{g_{\delta_j,\xi_j}} \chi_{\delta_j}\cdot \nabla_{g_{\delta_j,\xi_j}} U_{1,0}\big|^{\frac{p+1}{p}} d z\\&+C
  \int\limits_{B(0,r_0/\delta_j)}\big|\delta^2_jh_{\delta_j}\chi_{\delta_j}U_{1,0}\big|^{\frac{p+1}{p}}dz\\
  \leq &C \Big[\int\limits_{B(0,r_0/\delta_j)}\big|\delta_j^{\frac{N\alpha \varepsilon}{p+1}}\chi_{\delta_j}^{p-\alpha \varepsilon}(V_{1,0}^{p-\alpha \varepsilon}-V_{1,0}^{p})\big|^{\frac{p+1}{p}} d z+ \int\limits_{B(0,r_0/\delta_j)}\big|(\delta_j^{\frac{N\alpha \varepsilon}{p+1}}\chi_{\delta_j}^{p-\alpha \varepsilon}-\chi_{\delta_j})V_{1,0}^{p}\big|^{\frac{p+1}{p}} d z\\
  &+ \int\limits_{B(0,r_0/\delta_j)}\big| \chi_{\delta_j}(\Delta _{g_{\delta_j,\xi_j}}U_{1,0}-\Delta _{Eucl}U_{1,0}) \big|^{\frac{p+1}{p}}dx + \int\limits_{B(0,r_0/\delta_j)}\big|\delta^2_jU_{1,0}\Delta _{g_{\delta_j,\xi_j}}\chi_{\delta_j}\big|^{\frac{p+1}{p}} d z\\&+ \int\limits_{B(0,r_0/\delta_j)}\big|\delta_j\nabla_{g_{\delta_j,\xi_j}} \chi_{\delta_j}\cdot \nabla_{g_{\delta_j,\xi_j}} U_{1,0}\big|^{\frac{p+1}{p}} d z+
  \int\limits_{B(0,r_0/\delta_j)}\big|\delta^2_jh_{\delta_j,\xi_j}\chi_{\delta_j}U_{1,0}\big|^{\frac{p+1}{p}}dz\Big]\\
  =:&C(A_1+A_2+A_3+A_4+A_5+A_6),
\end{align*}
where $g_{\delta_j,\xi_j}(z)=\exp_{\xi_j}^*g(\delta_jz)$, $\chi_{\delta_j}(z)=\chi({\delta_j|z|})$ and $h_{\delta_j,\xi_j}(z)=h(\exp_{\xi_j}(\delta_jz))$. We are led to estimate each $A_i$, $i=1,2,\cdots,6$. First, for any fixed $R>0$ large enough and $j=1,2,\cdots,k$, by Lemma \ref{jian1} and Taylor formula, we have
\begin{align*}
  A_1&\leq C \int\limits_{B(0,r_0/\delta_j)}\big|(V_{1,0}^{p-\alpha \varepsilon}-V_{1,0}^{p})\big|^{\frac{p+1}{p}} d z=O\Big(\varepsilon^{\frac{p+1}{p}}\int\limits_{B(0,r_0/\delta_j)}\big |V^{p+1}_{1,0}\log V^{\frac{p+1}{p}}_{1,0}\big|d z\Big)\\
  &=O(\varepsilon^{\frac{p+1}{p}})+ O\Big(\varepsilon^{\frac{p+1}{p}}\int\limits_{B(0,r_0/\delta_j)\backslash B(0,R)}\big |V^{p+1}_{1,0}\log V^{\frac{p+1}{p}}_{1,0}\big|d z\Big)\\
  &=O(\varepsilon^{\frac{p+1}{p}})+ O\Big(\varepsilon^{\frac{p+1}{p}}\int\limits _R^{r_0/\delta_j} r^{N-1-\frac{(N-2)(p+1)^2}{p}}d r\Big)=O(\varepsilon^{\frac{p+1}{p}}),
\end{align*}
as $\varepsilon\rightarrow0$, uniformly with respect to $\xi_j\in \mathcal{M}$ and $t_j\in [a,b]$, $0<a<b<+\infty$,
where we have used the fact that $N<\frac{(N-2)(p+1)^2}{p}$, since $p>\frac{2}{N-2}$.
Using Lemma \ref{jian1} and Taylor formula again, for $j=1,2,\cdots,k$, we obtain
\begin{align*}
  A_2&=O\big( |\varepsilon\log \varepsilon|^{\frac{p+1}{p}}\big)+O\Big( |\varepsilon\log \varepsilon|^{\frac{p+1}{p}} \int\limits_{B(0,r_0/2\delta_j)\backslash B(0,R)}V_{1,0}^{p+1}dz\Big)+O\Big(\int\limits_{B(0,r_0/\delta_j)\backslash B(0,r_0/2\delta_j)}V_{1,0}^{p+1}dz\Big)\\
  &=O\big( |\varepsilon\log \varepsilon|^{\frac{p+1}{p}}\big)+O\Big(|\varepsilon\log \varepsilon|^{\frac{p+1}{p}}\int\limits_{R}^{r_0/2\delta_j}r^{N-1-(N-2)(p+1)}dr\Big)+O\Big(\int\limits_{r_0/2\delta_j}^{r_0/\delta_j}r^{N-1-(N-2)(p+1)}dr\Big)\\
  &=O\big( |\varepsilon\log \varepsilon|^{\frac{p+1}{p}}\big)+O\big(\varepsilon^{\frac{(N-2)p-2}{2}}\big),
\end{align*}
as $\varepsilon\rightarrow0$, uniformly with respect to $\xi_j\in \mathcal{M}$ and $t_j\in [a,b]$.
Since $N\geq8$, then $A_2\leq |\varepsilon\log \varepsilon|^{\frac{p+1}{p}}$.
For any fixed $R>0$ large enough and $j=1,2,\cdots,k$, it follows from \eqref{lap1} and \eqref{lap2} that
\begin{align*}
 A_3=\left\{
  \begin{array}{ll}
  O(\varepsilon^{\frac{p+1}{p}})+\displaystyle O\Big(\varepsilon^{\frac{p+1}{p}}\int\limits _R^{r_0/\delta_j} r^{N-1-\frac{(N-2)(p+1)}{p}}dr\Big)=O(\varepsilon^{\frac{p+1}{p}}),\quad &\text{if $p>\frac{N}{N-2}$;}
 \\ O(\varepsilon^{\frac{p+1}{p}})+\displaystyle O\Big(\varepsilon^{\frac{p+1}{p}}\int\limits _R^{r_0/\delta_j} r^{N-1-\frac{(N-3)(p+1)}{p}}dr\Big)=O(\varepsilon^{\frac{p+1}{p}}),\quad &\text{if $p=\frac{N}{N-2}$;}\\
 O(\varepsilon^{\frac{p+1}{p}})+\displaystyle O\Big(\varepsilon^{\frac{p+1}{p}}\int\limits _R^{r_0/\delta_j} r^{N-1-(N-2)(p+1)+\frac{2p+2}{p}}dr\Big)=O(\varepsilon^{\frac{p+1}{p}}),\quad &\text{if $p<\frac{N}{N-2}$,}
    \end{array}
    \right.
  \end{align*}
as $\varepsilon\rightarrow0$,  uniformly with respect to $\xi_j\in \mathcal{M}$ and $t_j\in [a,b]$,
where we have used the fact that $N\geq8$ and $p>1$. Since there hold $|\chi'_{\delta_j}|\leq C\delta_j$ and $|\chi''_{\delta_j}|\leq C\delta^2_j$ for any $j=1,2,\cdots,k$, we have
\begin{align*}
 A_4=\left\{
  \begin{array}{ll}
  \displaystyle O\Big(\varepsilon^{\frac{2(p+1)}{p}}\int\limits _{r_0/2\delta_j}^{r_0/\delta_j} r^{N-1-\frac{(N-2)(p+1)}{p}}dr\Big)=O(\varepsilon^{\frac{2(p+1)}{p}}),\quad &\text{if $p>\frac{N}{N-2}$;}
 \\ \displaystyle O\Big(\varepsilon^{\frac{2(p+1)}{p}}\int\limits _{r_0/2\delta_j}^{r_0/\delta_j} r^{N-1-\frac{(N-3)(p+1)}{p}}dr\Big)=O(\varepsilon^{\frac{2(p+1)}{p}}),\quad &\text{if $p=\frac{N}{N-2}$;}\\
 \displaystyle O\Big(\varepsilon^{\frac{2(p+1)}{p}}\int\limits _{r_0/2\delta_j}^{r_0/\delta_j} r^{N-1-(N-2)(p+1)+\frac{2p+2}{p}}dr\Big)=O(\varepsilon^{\frac{2(p+1)}{p}}),\quad &\text{if $p<\frac{N}{N-2}$,}
    \end{array}
    \right.
  \end{align*}
  and
 \begin{align*}
 A_5=\left\{
  \begin{array}{ll}
  \displaystyle O\Big(\varepsilon^{\frac{p+1}{p}}\int\limits _{r_0/2\delta_j}^{r_0/\delta_j} r^{N-1-\frac{(N-1)(p+1)}{p}}dr\Big)=O(\varepsilon^{\frac{p+1}{p}}),\quad &\text{if $p>\frac{N}{N-2}$;}
 \\ \displaystyle O\Big(\varepsilon^{\frac{p+1}{p}}\int\limits _{r_0/2\delta_j}^{r_0/\delta_j} r^{N-1-\frac{(N-2)(p+1)}{p}}dr\Big)=O(\varepsilon^{\frac{p+1}{p}}),\quad &\text{if $p=\frac{N}{N-2}$;}\\
 \displaystyle O\Big(\varepsilon^{\frac{p+1}{p}}\int\limits _{r_0/2\delta_j}^{r_0/\delta_j} r^{N-1-(N-2)(p+1)+\frac{p+1}{p}}dr\Big)=O(\varepsilon^{\frac{p+1}{p}}),\quad &\text{if $p<\frac{N}{N-2}$,}
    \end{array}
    \right.
  \end{align*}
as $\varepsilon\rightarrow0$, uniformly with respect to $\xi_j\in \mathcal{M}$ and $t_j\in [a,b]$. Moreover, for any fixed $R>0$ large enough and $j=1,2,\cdots,k$, it's easy to obtain
 \begin{align*}
 A_6=\left\{
  \begin{array}{ll}
  O(\varepsilon^{\frac{p+1}{p}})+\displaystyle O\Big(\varepsilon^{\frac{p+1}{p}}\int\limits _R^{r_0/\delta_j} r^{N-1-\frac{(N-2)(p+1)}{p}}dr\Big)=O(\varepsilon^{\frac{p+1}{p}}),\quad &\text{if $p>\frac{N}{N-2}$;}
 \\ O(\varepsilon^{\frac{p+1}{p}})+\displaystyle O\Big(\varepsilon^{\frac{p+1}{p}}\int\limits _R^{r_0/\delta_j} r^{N-1-\frac{(N-3)(p+1)}{p}}dr\Big)=O(\varepsilon^{\frac{p+1}{p}}),\quad &\text{if $p=\frac{N}{N-2}$;}\\
 O(\varepsilon^{\frac{p+1}{p}})+\displaystyle O\Big(\varepsilon^{\frac{p+1}{p}}\int\limits _R^{r_0/\delta_j} r^{N-1-(N-2)(p+1)+\frac{2p+2}{p}}dr\Big)=O(\varepsilon^{\frac{p+1}{p}}),\quad &\text{if $p<\frac{N}{N-2}$,}
    \end{array}
    \right.
  \end{align*}
as $\varepsilon\rightarrow0$,   uniformly with respect to $\xi_j\in \mathcal{M}$ and $t_j\in [a,b]$.
From the above arguments, we obtain $I_j=O(\varepsilon |\log \varepsilon|)$ for any $j=1,2,\cdots,k$.

Similarly, we can prove that
\begin{align*}
  II_j^{\frac{q+1}{q}}\leq & C\Big[ \int\limits_{B(0,r_0/\delta_j)}\big|\delta_j^{\frac{N\beta \varepsilon}{q+1}}\chi_{\delta_j}^{q-\beta \varepsilon}(U_{1,0}^{q-\beta \varepsilon}-U_{1,0}^{q})\big|^{\frac{q+1}{q}} d z+ \int\limits_{B(0,r_0/\delta_j)}\big|(\delta_j^{\frac{N\beta \varepsilon}{q+1}}\chi_{\delta_j}^{q-\beta \varepsilon}-\chi_{\delta_j})U_{1,0}^{q}\big|^{\frac{q+1}{q}} d z\\
  &+ \int\limits_{B(0,r_0/\delta_j)}\big| \chi_{\delta_j}(\Delta _{g_{\delta_j,\xi_j}}V_{1,0}-\Delta _{Eucl}V_{1,0}) \big|^{\frac{q+1}{q}}d z+ \int\limits_{B(0,r_0/\delta_j)}\big|\delta^2_jV_{1,0}\Delta _{g_{\delta_j,\xi_j}}\chi_{\delta_j}\big|^{\frac{q+1}{q}} d z\\&+ \int\limits_{B(0,r_0/\delta_j)}\big|\delta_j\nabla_{g_{\delta_j,\xi_j}} \chi_{\delta_j}\cdot \nabla_{g_{\delta_j,\xi_j}} V_{1,0}\big|^{\frac{q+1}{q}} d z+
  \int\limits_{B(0,r_0/\delta_j)}\big|\delta^2_jh_{\delta_j,\xi_j}\chi_{\delta_j}V_{1,0}\big|^{\frac{q+1}{q}}dz\Big]\\
  =:&C(B_1+B_2+B_3+B_4+B_5+B_6).
\end{align*}
For any fixed $R>0$ large enough and $j=1,2,\cdots,k$,  by $N\geq8$ and $q>1$,  we have
 \begin{align*}
 B_1=\left\{
  \begin{array}{ll}
  O(\varepsilon^{\frac{q+1}{q}})+\displaystyle O\Big(\varepsilon^{\frac{q+1}{q}}\int\limits _R^{r_0/\delta_j} r^{N-1-\frac{(N-2)(q+1)^2}{q}}dr\Big)=O(\varepsilon^{\frac{q+1}{q}}),\quad &\text{if $p>\frac{N}{N-2}$;}
 \\ O(\varepsilon^{\frac{q+1}{q}})+\displaystyle O\Big(\varepsilon^{\frac{q+1}{q}}\int\limits _R^{r_0/\delta_j} r^{N-1-\frac{(N-3)(q+1)^2}{q}}dr\Big)=O(\varepsilon^{\frac{q+1}{q}}),\quad &\text{if $p=\frac{N}{N-2}$;}\\
 O(\varepsilon^{\frac{q+1}{q}})+\displaystyle O\Big(\varepsilon^{\frac{q+1}{q}}\int\limits _R^{r_0/\delta_j} r^{N-1-\frac{[(N-2)p-2](q+1)^2}{q}}dr\Big)=O(\varepsilon^{\frac{q+1}{q}}),\quad &\text{if $p<\frac{N}{N-2}$,}
    \end{array}
    \right.
  \end{align*}
  and
 \begin{align*}
 B_2=\left\{
  \begin{array}{ll}
  O\big( |\varepsilon\log \varepsilon|^{\frac{q+1}{q}}\big)+\displaystyle O\Big(\int\limits_{r_0/2\delta_j}^{r_0/\delta_j}r^{N-1-(N-2)(q+1)}dr\Big)=O\big( |\varepsilon\log \varepsilon|^{\frac{q+1}{q}}\big),\,\, &\text{if $p>\frac{N}{N-2}$;}
 \\ O\big( |\varepsilon\log \varepsilon|^{\frac{q+1}{q}}\big)+\displaystyle O\Big(\int\limits_{r_0/2\delta_j}^{r_0/\delta_j}r^{N-1-(N-3)(q+1)}dr\Big)=O\big( |\varepsilon\log \varepsilon|^{\frac{q+1}{q}}\big),\,\, &\text{if $p=\frac{N}{N-2}$, $N\geq 10$;}\\
 O\big( |\varepsilon\log \varepsilon|^{\frac{q+1}{q}}\big)+\displaystyle O\Big(\int\limits_{r_0/2\delta_j}^{r_0/\delta_j}r^{N-1-[(N-2)p-2](q+1)}dr\Big)=O\big( |\varepsilon\log \varepsilon|^{\frac{q+1}{q}}\big),\,\, &\text{if $p<\frac{N}{N-2}$, $N\geq 12$,}
    \end{array}
    \right.
  \end{align*}
as $\varepsilon\rightarrow0$, uniformly with respect to $\xi_j\in \mathcal{M}$ and $t_j\in [a,b]$. Similar arguments as above,  we have
\begin{equation*}
  B_3=O(\varepsilon^{\frac{q+1}{q}})+\displaystyle O\Big(\varepsilon^{\frac{q+1}{q}}\int\limits _R^{r_0/\delta_j} r^{N-1-\frac{(N-2)(q+1)}{q}}dr\Big)=O(\varepsilon^{\frac{q+1}{q}}),
\end{equation*}
\begin{equation*}
  B_4=\displaystyle O\Big(\varepsilon^{\frac{2(q+1)}{q}}\int\limits _{r_0/2\delta_j}^{r_0/\delta_j} r^{N-1-\frac{(N-2)(q+1)}{q}}dr\Big)=O(\varepsilon^{\frac{2(q+1)}{q}}),
\end{equation*}
\begin{equation*}
  B_5=\displaystyle O\Big(\varepsilon^{\frac{q+1}{q}}\int\limits _{r_0/2\delta_j}^{r_0/\delta_j} r^{N-1-\frac{(N-1)(q+1)}{q}}dr\Big)=O(\varepsilon^{\frac{q+1}{q}}),
\end{equation*}
and
\begin{equation*}
  B_6=O(\varepsilon^{\frac{q+1}{q}})+\displaystyle O\Big(\varepsilon^{\frac{q+1}{q}}\int\limits _R^{r_0/\delta_j} r^{N-1-\frac{(N-2)(q+1)}{q}}dr\Big)=O(\varepsilon^{\frac{q+1}{q}}),
\end{equation*}
as $\varepsilon\rightarrow0$, uniformly with respect to $\xi_j\in \mathcal{M}$ and $t_j\in [a,b]$. Hence $II_j=O(\varepsilon |\log \varepsilon|)$ for any $j=1,2,\cdots,k$. This ends the proof.
\end{proof}

We now prove Proposition \ref{propo1} by using Lemmas  \ref{line} and \ref{error}.\\
{\bf Proof of Proposition \ref{propo1}.} For any $\varepsilon>0$ small enough, $\bar{t}\in (\mathbb{R}^+)^k$, and $\bar{\xi}\in \mathcal{M}^k$, if $\bar{\delta}$ is as in \eqref{solu'}, we define the map $\mathcal{T}_{\varepsilon,\bar{t},\bar{\xi}}:\mathcal{Z}_{\bar{\delta},\bar{\xi}}\rightarrow \mathcal{Z}_{\bar{\delta},\bar{\xi}}$ by
\begin{equation*}
  \mathcal{T}_{\varepsilon,\bar{t},\bar{\xi}}(\Psi,\Phi)=\mathcal{L}^{-1}_{\varepsilon,\bar{t},\bar{\xi}}(\mathcal{N}_{\varepsilon,\bar{t},\bar{\xi}}(\Psi,\Phi)+\mathcal{R}_{\varepsilon,\bar{t},\bar{\xi}}),
\end{equation*}
where $\mathcal{L}_{\varepsilon,\bar{t},\bar{\xi}}$, $\mathcal{N}_{\varepsilon,\bar{t},\bar{\xi}}$ and $\mathcal{R}_{\varepsilon,\bar{t},\bar{\xi}}$ are as in \eqref{defl}, \eqref{defn} and \eqref{defr}, respectively. We also set
\begin{equation*}
  \mathcal{B}_{\varepsilon,\bar{t},\bar{\xi}}(\gamma)=\big\{(\Psi,\Phi)\in \mathcal{Z}_{\bar{\delta},\bar{\xi}}:\|(\Psi,\Phi)\|\leq \gamma \|\mathcal{R}_{\varepsilon,\bar{t},\bar{\xi}}\|\big\},
\end{equation*}
where $\gamma>0$ is a fixed constant large enough. We prove that the map $\mathcal{T}_{\varepsilon,\bar{t},\bar{\xi}}$ admits a fixed point  $(\Psi_{\varepsilon,\bar{t},\bar{\xi}},\Phi_{\varepsilon,\bar{t},\bar{\xi}})$. Therefore, we shall prove that,
for any $\varepsilon>0$ small,
there hold:

$(i)$ $\mathcal{T}_{\varepsilon,\bar{t},\bar{\xi}}(\mathcal{B}_{\varepsilon,\bar{t},\bar{\xi}}(\gamma))\subset \mathcal{B}_{\varepsilon,\bar{t},\bar{\xi}}(\gamma)$;

$(ii)$ $\mathcal{T}_{\varepsilon,\bar{t},\bar{\xi}}$ is a contraction map on $\mathcal{B}_{\varepsilon,\bar{t},\bar{\xi}}(\gamma)$.

For $(i)$, by \eqref{em} and Lemma \ref{line}, for any $\varepsilon>0$ small enough,
and $(\Psi,\Phi)\in \mathcal{B}_{\varepsilon,\bar{t},\bar{\xi}}(\gamma)$, we have
\begin{align*}
  &\|\mathcal{T}_{\varepsilon,\bar{t},\bar{\xi}}(\mathcal{B}_{\varepsilon,\bar{t},\bar{\xi}}(\gamma))\|
  \leq C\|\mathcal{N}_{\varepsilon,\bar{t},\bar{\xi}}(\Psi,\Phi)\|+C\|\mathcal{R}_{\varepsilon,\bar{t},\bar{\xi}}\|\\
   \leq& C\Big[\|\mathcal{R}_{\varepsilon,\bar{t},\bar{\xi}}\|+ \big\|f_\varepsilon(\mathcal{H}_{\bar{\delta},\bar{\xi}}+\Phi)-f_\varepsilon(\mathcal{H}_{\bar{\delta},\bar{\xi}})-f'_\varepsilon(\mathcal{H}_{\bar{\delta},\bar{\xi}})\Phi\big\|_{\frac{p+1}{p}}\\
  &+\big\|g_\varepsilon(\mathcal{W}_{\bar{\delta},\bar{\xi}}+\Psi)-g_\varepsilon(\mathcal{W}_{\bar{\delta},\bar{\xi}})-g'_\varepsilon(\mathcal{W}_{\bar{\delta},\bar{\xi}})\Psi\big\|_{\frac{q+1}{q}}\Big]=:C(\|\mathcal{R}_{\varepsilon,\bar{t},\bar{\xi}}\|+I+II).
\end{align*}
By the mean value formula, Lemmas \ref{gs}, \ref{error}, and the Sobolev embedding theorem, we obtain
\begin{equation*}
  I\leq C\|\Phi\|^{p-\alpha\varepsilon}_{\frac{(p+1)(p-\alpha\varepsilon)}{p}}\leq C\|\Phi\|^{p-\alpha\varepsilon}_{p+1}\leq C\gamma^{p-\alpha\varepsilon}\|\mathcal{R}_{\varepsilon,\bar{t},\bar{\xi}}\|^{p-\alpha\varepsilon}
  \leq \gamma\|\mathcal{R}_{\varepsilon,\bar{t},\bar{\xi}}\|,
\end{equation*}
and
\begin{align*}
  II&\leq
  \left\{
    \begin{array}{ll}
  C\|\Psi\|^{q-\beta\varepsilon}_{\frac{(q+1)(q-\beta\varepsilon)}{q}}+C\|\Psi\|_{\frac{2(q+1)}{2+\beta\varepsilon}}^2\sum\limits_{j=1}^k\|W_{\delta_j,\xi_j}\|_{q+1}^{q-2-\beta\varepsilon}\leq \gamma\|\mathcal{R}_{\varepsilon,\bar{t},\bar{\xi}}\|,\quad &\text{if $q> 2$},\nonumber\\
  C\|\Psi\|^{q-\beta\varepsilon}_{\frac{(q+1)(q-\beta\varepsilon)}{q}}\leq \gamma\|\mathcal{R}_{\varepsilon,\bar{t},\bar{\xi}}\|,\quad &\text{if $q\leq 2$},
  \end{array}
    \right.
\end{align*}
where we have used the fact that $\|W_{\delta_j,\xi_j}\|_{q+1}<+\infty$ for any $1<p\leq \frac{N+2}{N-2}\leq q$ and $j=1,2,\cdots,k$. So we have $(i)$.

Similarly, by \eqref{em} and Lemma \ref{line}, for any $\varepsilon>0$ small enough,
and $(\Psi_1,\Phi_1), (\Psi_2,\Phi_2)\in \mathcal{B}_{\varepsilon,\bar{t},\bar{\xi}}(\gamma)$, we have
\begin{align*}
  &\|\mathcal{T}_{\varepsilon,\bar{t},\bar{\xi}}(\Psi_1,\Phi_1)-\mathcal{T}_{\varepsilon,\bar{t},\bar{\xi}}(\Psi_2,\Phi_2)\| \\
  \leq& C\|\mathcal{N}_{\varepsilon,\bar{t},\bar{\xi}}(\Psi_1,\Phi_1)-\mathcal{N}_{\varepsilon,\bar{t},\bar{\xi}}(\Psi_2,\Phi_2)\|\\
   \leq& C\Big[\big\|f_\varepsilon(\mathcal{H}_{\bar{\delta},\bar{\xi}}+\Phi_1)-f_\varepsilon(\mathcal{H}_{\bar{\delta},\bar{\xi}}+\Phi_2)
   -f'_\varepsilon(\mathcal{H}_{\bar{\delta},\bar{\xi}})(\Phi_1-\Phi_2)\big\|_{\frac{p+1}{p}}\\
  &+C\big\|g_\varepsilon(\mathcal{W}_{\bar{\delta},\bar{\xi}}+\Psi_1)-g_\varepsilon(\mathcal{W}_{\bar{\delta},\bar{\xi}}+\Psi_2)-g'_\varepsilon(\mathcal{W}_{\bar{\delta},\bar{\xi}})(\Psi_1-\Psi_2)\big\|_{\frac{q+1}{q}}\Big]=:C(III+IV).
\end{align*}
By the mean value formula, Lemma \ref{gs}, and the Sobolev embedding theorem, we obtain
\begin{align}\label{one}
  III&\leq  C\Big(\|\Phi_1\|_{\frac{(p-1-\alpha\varepsilon)(p+1)}{p-1}}^{p-1-\alpha\varepsilon}
  +\|\Phi_2\|_{\frac{(p-1-\alpha\varepsilon)(p+1)}{p-1}}^{p-1-\alpha\varepsilon}\Big)\|\Phi_1-\Phi_2\|_{p+1}\nonumber\\
  &\leq C \gamma^{p-1-\alpha\varepsilon}\|\mathcal{R}_{\varepsilon,\bar{t},\bar{\xi}}\|^{p-1-\alpha\varepsilon}\|\Phi_1-\Phi_2\|,
\end{align}
and
\begin{align}\label{two}
IV&\leq \left\{
    \begin{array}{ll}
    C\Big(\|\Psi_1\|_{\frac{(q-1-\beta\varepsilon)(q+1)}{q-1}}^{q-1-\beta\varepsilon}
  +\|\Psi_2\|_{\frac{(q-1-\beta\varepsilon)(q+1)}{q-1}}^{q-1-\beta\varepsilon}\Big)\|\Psi_1-\Psi_2\|_{q+1}\\
  \quad+
  C(\|\Psi_1\|_{\frac{q+1}{1+\beta\varepsilon}}+\|\Psi_2\|_{\frac{q+1}{1+\beta\varepsilon}})\|\Psi_1-\Psi_2\|_{q+1}\sum\limits_{j=1}^k\|W_{\delta_j,\xi_j}\|_{q+1}^{q-2-\beta\varepsilon}
    \quad &\text{if $q>2$},\nonumber\\
   C\Big(\|\Psi_1\|_{\frac{(q-1-\beta\varepsilon)(q+1)}{q-1}}^{q-1-\beta\varepsilon}
  +\|\Psi_2\|_{\frac{(q-1-\beta\varepsilon)(q+1)}{q-1}}^{q-1-\beta\varepsilon}\Big)\|\Psi_1-\Psi_2\|_{q+1},\quad &\text{if $q\leq2$},
  \end{array}
  \right.\\
  &\leq \left\{
    \begin{array}{ll}
    C \gamma^{q-1-\beta\varepsilon}\|\mathcal{R}_{\varepsilon,\bar{t},\bar{\xi}}\|^{q-1-\beta\varepsilon}\|\Psi_1-\Psi_2\|+C \gamma\|\mathcal{R}_{\varepsilon,\bar{t},\bar{\xi}}\|\|\Psi_1-\Psi_2\|,\quad &\text{if $q>2$},\\
    C \gamma^{q-1-\beta\varepsilon}\|\mathcal{R}_{\varepsilon,\bar{t},\bar{\xi}}\|^{q-1-\beta\varepsilon}\|\Psi_1-\Psi_2\|,\quad &\text{if $q\leq2$}.
    \end{array}
  \right.
\end{align}
By Lemma \ref{error}, we know $C \gamma\|\mathcal{R}_{\varepsilon,\bar{t},\bar{\xi}}\|,C \gamma^{p-1-\alpha\varepsilon}\|\mathcal{R}_{\varepsilon,\bar{t},\bar{\xi}}\|^{p-1-\alpha\varepsilon},C \gamma^{q-1-\beta\varepsilon}\|\mathcal{R}_{\varepsilon,\bar{t},\bar{\xi}}\|^{q-1-\beta\varepsilon}\in (0,1)$. This proves $(ii)$. Finally, by using the implicit function theorem, we can prove the regularity of $(\Psi_{\varepsilon,\bar{t},\bar{\xi}},\Phi_{\varepsilon,\bar{t},\bar{\xi}})$ with respect to $\bar{t}$ and $\bar{\xi}$. Thus we complete the proof.
\qed

\section{Proof of Proposition \ref{propo2}}\label{sec5}

This section is devoted to the proof of Proposition \ref{propo2}. As a first step, we have

\begin{lemma}
Under the assumptions on $p,q$ and $N$ of Theorem \ref{th}, if $\bar{\delta}$ is as in \eqref{solu'}, then for any $\varepsilon>0$ small enough, if $(\bar{t},\bar{\xi})$ is a critical point of the functional $\widetilde{\mathcal{J}}_\varepsilon$, then $\big(\mathcal{W}_{\bar{\delta},\bar{\xi}}+\Psi_{\varepsilon,\bar{t},\bar{\xi}}
  ,\mathcal{H}_{\bar{\delta},\bar{\xi}}+\Phi_{\varepsilon,\bar{t},\bar{\xi}}\big)$ is a  solution of system \eqref{pro}, or equivalently of  \eqref{repro}.
\end{lemma}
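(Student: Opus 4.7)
The strategy is the classical Lyapunov--Schmidt conclusion: having already solved the infinite-dimensional orthogonal equation \eqref{tou2} via Proposition \ref{propo1}, it suffices to force the $(N+1)k$ Lagrange-multiplier coefficients coming from \eqref{tou1} to vanish, and to recognize that this is exactly what criticality of the reduced functional $\widetilde{\mathcal{J}}_\varepsilon$ gives. Concretely, set $(u_\varepsilon,v_\varepsilon)=(\mathcal{W}_{\bar{\delta},\bar{\xi}}+\Psi_{\varepsilon,\bar{t},\bar{\xi}},\mathcal{H}_{\bar{\delta},\bar{\xi}}+\Phi_{\varepsilon,\bar{t},\bar{\xi}})$. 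Because \eqref{tou2} holds, the residual $(u_\varepsilon,v_\varepsilon)-\mathcal{I}^{*}(f_\varepsilon(v_\varepsilon),g_\varepsilon(u_\varepsilon))$ belongs to $\mathcal{Y}_{\bar{\delta},\bar{\xi}}$; equivalently, in the weak formulation,
$$\mathcal{J}'_\varepsilon(u_\varepsilon,v_\varepsilon)[(\varphi,\psi)]=\sum_{l=0}^{N}\sum_{m=1}^{k} c_{lm}\,\bigl\langle(\Psi^{l}_{\delta_m,\xi_m},\Phi^{l}_{\delta_m,\xi_m}),(\varphi,\psi)\bigr\rangle_{h}$$
for every $(\varphi,\psi)\in\mathcal{X}_{p,q}(\mathcal{M})$, with coefficients $c_{lm}=c_{lm}(\varepsilon,\bar{t},\bar{\xi})$ uniquely determined by Lemma \ref{topo}.

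Next I differentiate the reduced functional. By \eqref{defj} and the chain rule, for each $j$ and each component $(\xi_j)_i$,
$$\partial_{t_j}\widetilde{\mathcal{J}}_\varepsilon=\mathcal{J}'_\varepsilon(u_\varepsilon,v_\varepsilon)\bigl[\partial_{t_j}(u_\varepsilon,v_\varepsilon)\bigr],\qquad \partial_{(\xi_j)_i}\widetilde{\mathcal{J}}_\varepsilon=\mathcal{J}'_\varepsilon(u_\varepsilon,v_\varepsilon)\bigl[\partial_{(\xi_j)_i}(u_\varepsilon,v_\varepsilon)\bigr].$$
Substituting the representation from the first paragraph, the criticality of $(\bar{t},\bar{\xi})$ becomes the linear system $\sum_{l,m} c_{lm}\,M_{(i,j),(l,m)}=0$, where the matrix entries are inner products of the bubble basis against the $(t,\xi)$-derivatives of the ansatz. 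The point is that $\partial_{t_j}W_{\delta_j,\xi_j}$ and $\partial_{(\xi_j)_i}W_{\delta_j,\xi_j}$ equal, up to nonzero scaling constants coming from $\delta_j=\sqrt{\varepsilon t_j}$ and up to exponentially small cutoff errors, the canonical bubble functions $\Psi^{0}_{\delta_j,\xi_j}$ and $\Psi^{i}_{\delta_j,\xi_j}$ (and likewise on the $H$ side). The additional contribution of $\partial_s(\Psi_{\varepsilon,\bar{t},\bar{\xi}},\Phi_{\varepsilon,\bar{t},\bar{\xi}})$ is controlled by differentiating the orthogonality conditions defining $\mathcal{Z}_{\bar{\delta},\bar{\xi}}$ together with the bound $\|(\Psi_\varepsilon,\Phi_\varepsilon)\|=O(\varepsilon|\log\varepsilon|)$ from Proposition \ref{propo1}, yielding an $o(1)$ perturbation. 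Combined with the separation $d_g(\xi_j,\xi_m)\geq\varrho_2>2r_0$ for $j\neq m$, the matrix $M$ is, as $\varepsilon\to 0$, block-diagonal in $(j,m)$ with diagonal blocks given, up to nonzero constants, exactly by the invertible matrix computed in \eqref{gu1}--\eqref{gu4} of the proof of Lemma \ref{topo}. Hence $M$ is invertible for $\varepsilon$ small, forcing $c_{lm}=0$ for all $l,m$, and therefore $\mathcal{J}'_\varepsilon(u_\varepsilon,v_\varepsilon)=0$, which is precisely \eqref{repro}.

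The main obstacle is the invertibility step. One must verify that the error terms in $M$, in particular those generated by the $\bar{t}$- and $\bar{\xi}$-dependence of the Lyapunov--Schmidt correction $(\Psi_{\varepsilon,\bar{t},\bar{\xi}},\Phi_{\varepsilon,\bar{t},\bar{\xi}})$ and by the cutoffs $\chi(d_g(x,\xi_j))$, remain smaller than the leading block-diagonal entries uniformly in $(\bar{t},\bar{\xi})$ inside the admissible configuration space $\Lambda$ with $\bar{\delta}$ as in \eqref{solu'}. This requires careful bookkeeping of the scales $\delta_j=\sqrt{\varepsilon t_j}$ and of the $C^1$ dependence of $(\Psi_\varepsilon,\Phi_\varepsilon)$ on $(\bar{t},\bar{\xi})$ asserted in Proposition \ref{propo1}; everything else reduces to routine chain-rule computations and to the non-degeneracy information already encoded in Lemmas \ref{nonde} and \ref{topo}.
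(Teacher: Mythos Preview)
Your proposal is correct and follows essentially the same approach as the paper: represent the residual as a combination of the $(\Psi^l_{\delta_m,\xi_m},\Phi^l_{\delta_m,\xi_m})$, differentiate $\widetilde{\mathcal J}_\varepsilon$ in $t_j$ and in normal coordinates for $\xi_j$, identify the derivatives of the main part with the bubble basis via \eqref{ch1}--\eqref{ch2}, control the derivative of the correction by differentiating the orthogonality relations together with Proposition~\ref{propo1}, and read off invertibility from \eqref{gu1}--\eqref{gu4}. One small slip: the cutoff errors are not ``exponentially small'' but polynomially small in $\delta_j$ (i.e.\ in $\varepsilon$), which is what the paper actually uses in \eqref{e51}--\eqref{e55}.
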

\begin{proof}
Let $(\bar{t},\bar{\xi})$ be a critical point of  $\widetilde{\mathcal{J}}_\varepsilon$, where $\bar{t}=(t_1,t_2,\cdots,t_k)\in (\mathbb{R}^+)^k$ and $\bar{\xi}=(\xi_1,\xi_2,\cdots,\xi_k)\in \mathcal{M}^k$. Let $\bar{\xi}(y)=\big(\exp_{\xi_1}(y^1),\exp_{\xi_2}(y^2),\cdots,\exp_{\xi_k}(y^k)\big)$, $y=(y^1,y^2,\cdots,y^k)\in B(0,r)^k$, and $\xi_j(y^j)=\exp_{\xi_j}(y^j)$ for any $j=1,2,\cdots,k$, then $\bar{\xi}(0)=\bar{\xi}$. Since $(\bar{t},\bar{\xi})$ be a critical point of  $\widetilde{\mathcal{J}}_\varepsilon$, for any $m=1,2,\cdots,k$ and $l=1,2,\cdots,N$, there hold
\begin{equation*}
  \mathcal{J}'_{\varepsilon}\big(\mathcal{W}_{\bar{\delta},\bar{\xi}}+\Psi_{\varepsilon,\bar{t},\bar{\xi}},\mathcal{H}_{\bar{\delta},\bar{\xi}}+\Phi_{\varepsilon,\bar{t},\bar{\xi}}\big)
  \big(\partial _{t_m}\mathcal{W}_{\bar{\delta},\bar{\xi}}+\partial _{t_m}\Psi_{\varepsilon,\bar{t},\bar{\xi}},\partial _{t_m}\mathcal{H}_{\bar{\delta},\bar{\xi}}+\partial _{t_m}\Phi_{\varepsilon,\bar{t},\bar{\xi}}\big)=0,
\end{equation*}
and
\begin{equation*}
  \mathcal{J}'_{\varepsilon}\big(\mathcal{W}_{\bar{\delta},\bar{\xi}}+\Psi_{\varepsilon,\bar{t},\bar{\xi}},\mathcal{H}_{\bar{\delta},\bar{\xi}}+\Phi_{\varepsilon,\bar{t},\bar{\xi}}\big)
  \big(\partial _{y^m_l}\mathcal{W}_{\bar{\delta},\bar{\xi}}+\partial _{y^m_l}\Psi_{\varepsilon,\bar{t},\bar{\xi}},\partial _{y^m_l}\mathcal{H}_{\bar{\delta},\bar{\xi}}+\partial _{y^m_l}\Phi_{\varepsilon,\bar{t},\bar{\xi}}\big)=0.
\end{equation*}
For any $(\varphi,\psi)\in \mathcal{X}_{p,q}(\mathcal{M})$, by Proposition \ref{propo1}, there exist some constants $c_{10},c_{11},\cdots,c_{1N}$,  $c_{20},c_{21},\cdots,c_{2N}$, $\cdots$, $c_{k0},c_{k1},\cdots,c_{kN}$ such that
\begin{equation*}
  \mathcal{J}_\varepsilon'(\mathcal{W}_{\bar{\delta},\bar{\xi}}+\Psi_{\varepsilon,\bar{t},\bar{\xi}},\mathcal{H}_{\delta,\xi}+\Phi_{\varepsilon,\bar{t},\bar{\xi}})(\varphi,\psi)=
  \sum\limits_{l=0}^N\sum\limits_{m=1}^kc_{lm}\big\langle(\Psi_{\delta_m,\xi_m}^l,\Phi^l_{\delta_m,\xi_m}),(\varphi,\psi)\big\rangle_h.
\end{equation*}
Let $\partial _s$ denote $\partial_ {t_m}$ or $\partial _{y^m_l}$ for any $m=1,2,\cdots,k$ and $l=1,2,\cdots,N$. Then
\begin{align}\label{com}
  \partial_s \widetilde{\mathcal{J}}_{\varepsilon}(\bar{t},\bar{\xi}(y)) =&\mathcal{J}'_{\varepsilon}\big(\mathcal{W}_{\bar{\delta},\bar{\xi}(y)}+\Psi_{\varepsilon,\bar{t},\bar{\xi}(y)},\mathcal{H}_{\bar{\delta},\bar{\xi}(y)}+\Phi_{\varepsilon,\bar{t},\bar{\xi}(y)}\big)
  \big(\partial _{s}\mathcal{W}_{\bar{\delta},\bar{\xi}(y)}+\partial _{s}\Psi_{\varepsilon,\bar{t},\bar{\xi}(y)},\partial _{s}\mathcal{H}_{\bar{\delta},\bar{\xi}(y)}+\partial _{s}\Phi_{\varepsilon,\bar{t},\bar{\xi}(y)}\big)\nonumber\\
  =& \big\langle \big(\mathcal{W}_{\bar{\delta},\bar{\xi}(y)}+\Psi_{\varepsilon,\bar{t},\bar{\xi}(y)}
  ,\mathcal{H}_{\bar{\delta},\bar{\xi}(y)}+\Phi_{\varepsilon,\bar{t},\bar{\xi}(y)}\big)
  -\mathcal{I}^*\big(f_\varepsilon(\mathcal{H}_{\bar{\delta},\bar{\xi}(y)}+\Phi_{\varepsilon,\bar{t},\bar{\xi}(y)}),g_\varepsilon(\mathcal{W}_{\bar{\delta},\bar{\xi}(y)}+\Psi_{\varepsilon,\bar{t},\bar{\xi}(y)})\big),\nonumber\\
  &\big(\partial _{s}\mathcal{W}_{\bar{\delta},\bar{\xi}(y)}+\partial _{s}\Psi_{\varepsilon,\bar{t},\bar{\xi}(y)},\partial _{s}\mathcal{H}_{\bar{\delta},\bar{\xi}(y)}+\partial _{s}\Phi_{\varepsilon,\bar{t},\bar{\xi}(y)}\big)\big\rangle \nonumber\\
  =&\sum\limits_{i=0}^N\sum\limits_{j=1}^kc_{ij}\big\langle\big(\Psi_{\delta_j,\xi_j(y^j)}^i,\Phi^i_{\delta_j,\xi_j(y^j)}\big),\big(\partial _{s}\mathcal{W}_{\bar{\delta},\bar{\xi}(y)}+\partial _{s}\Psi_{\varepsilon,\bar{t},\bar{\xi}(y)},\partial _{s}\mathcal{H}_{\bar{\delta},\bar{\xi}(y)}+\partial _{s}\Phi_{\varepsilon,\bar{t},\bar{\xi}(y)}\big)\big\rangle_h.
\end{align}
We prove that if we compute \eqref{com} at $y=0$, then for any $\varepsilon>0$ small enough, there holds
\begin{equation*}
  c_{ij}=0,\quad \text{for any  $i=0,1,\cdots,N$ and $j=1,2,\cdots,k$}.
\end{equation*}
Since $(\bar{t},\bar{\xi})$ is a critical point of  $\widetilde{\mathcal{J}}_\varepsilon$, then
\begin{equation}\label{e50}
  \partial_s \widetilde{\mathcal{J}}_{\varepsilon}(\bar{t},\bar{\xi}(y))|_{y=0}=0.
\end{equation}
For any $m=1,2,\cdots,k$ and $l=1,2,\cdots,N$,
  we can easily check that there hold
  \begin{equation}\label{ch1}
    \big(\partial_{t_m}\mathcal{W}_{\bar{\delta},\bar{\xi}},\partial_{t_m}\mathcal{H}_{\bar{\delta},\bar{\xi}}\big)=-\frac{1}{2t_m}\big(\Psi^0_{\delta_m,\xi_m},\Phi^0_{\delta_m,\xi_m}\big),
  \end{equation}
  and
  \begin{equation}\label{ch2}
    \big(\partial_{y^m_l} (\mathcal{W}_{\bar{\delta},\bar{\xi}(y))})\big|_{y=0},
    \partial_{y^m_l} (\mathcal{H}_{\bar{\delta},\bar{\xi}(y)})\big|_{y=0}\big)=\frac{1}{\delta_m}\big(\Psi^l_{\delta_m,\xi_m}+R_1,\Phi^l_{\delta_m,\xi_m}+R_2\big),
  \end{equation}
  where $\|(R_1,R_2)\|=o(\varepsilon^{\frac{\vartheta}{2}})$ as $\varepsilon\rightarrow0$ for all $\vartheta\in (0,1)$. Using \eqref{gu1}-\eqref{gu4}, we have
\begin{align}\label{e51}
  &\sum\limits_{i=0}^N\sum\limits_{j=1}^kc_{ij}\big\langle\big(\Psi_{\delta_j,\xi_j}^i,\Phi^i_{\delta_j,\xi_j}\big),\big(\partial _{t_m}\mathcal{W}_{\bar{\delta},\bar{\xi}},\partial _{t_m}\mathcal{H}_{\bar{\delta},\bar{\xi}}\big)\big\rangle_h\nonumber\\
  =&-\frac{1}{2t_m}\sum\limits_{i=0}^N\sum\limits_{j=1}^kc_{ij}\big\langle\big(\Psi_{\delta_j,\xi_j}^i,\Phi^i_{\delta_j,\xi_j}\big),\big(\Psi^0_{\delta_m,\xi_m},\Phi^0_{\delta_m,\xi_m}\big)\big\rangle_h\nonumber\\
  =&-\frac{1}{2t_m}\sum\limits_{i=0}^N\sum\limits_{j=1}^kc_{ij}\delta_{i0}\delta_{jm}\int\limits_{B(0,r_0/{\delta_m})}\big(p\chi^2_{\delta_m} V_{1,0}^{p-1}(\Phi_{1,0}^0)^2+q\chi^2_{\delta_m} U_{1,0}^{q-1}(\Psi_{1,0}^0)^2
  \big)dx+O(\varepsilon),
\end{align}
\begin{align}\label{e52}
  &\sum\limits_{i=0}^N\sum\limits_{j=1}^kc_{ij}\big\langle\big(\Psi_{\delta_j,\xi_j}^i,\Phi^i_{\delta_j,\xi_j}\big),\big(\partial _{y^m_l}\mathcal{W}_{\bar{\delta},\bar{\xi}(y)}\big|_{y=0},\partial _{y^m_l}\mathcal{H}_{\bar{\delta},\bar{\xi}(y)}\big|_{y=0}\big)\big\rangle_h\nonumber\\
  =&\frac{1}{\delta_m}\sum\limits_{i=0}^N\sum\limits_{j=1}^kc_{ij}\big\langle\big(\Psi_{\delta_j,\xi_j}^i,\Phi^i_{\delta_j,\xi_j}\big),\big(\Psi^l_{\delta_m,\xi_m}+R_1,\Phi^l_{\delta_m,\xi_m}+R_2\big)\big\rangle_h\nonumber\\
  =&\frac{1}{\delta_m}\sum\limits_{i=0}^N\sum\limits_{j=1}^kc_{ij}\delta_{il}\delta_{jm}\int\limits_{B(0,r_0/{\delta_m})}\big(p\chi^2_{\delta_m} V_{1,0}^{p-1}(\Phi_{1,0}^l)^2+q\chi^2_{\delta_m} U_{1,0}^{q-1}(\Psi_{1,0}^l)^2
  \big)dx+O(\varepsilon),
\end{align}
and
\begin{align}\label{e53}
  &\sum\limits_{i=0}^N\sum\limits_{j=1}^kc_{ij}\big\langle\big(\Psi_{\delta_j,\xi_j}^i,\Phi^i_{\delta_j,\xi_j}\big),\big(\partial _{s}\Psi_{\varepsilon,\bar{t},\bar{\xi}(y)}\big|_{y=0},\partial _{s}\Phi_{\varepsilon,\bar{t},\bar{\xi}(y)}\big|_{y=0}\big)\big\rangle_h\nonumber\\
  =&-\sum\limits_{i=0}^N\sum\limits_{j=1}^kc_{ij}\big\langle\big(\partial_s\Psi_{\delta_j,\xi_j(y^j)}^i\big|_{y=0},\partial_s\Phi^i_{\delta_j,\xi_j(y^j)}\big|_{y=0}\big),
  \big(\Psi_{\varepsilon,\bar{t},\bar{\xi}},\Phi_{\varepsilon,\bar{t},\bar{\xi}}\big)\big\rangle_h,
\end{align}
where $\chi_{\delta_m}(x)=\chi(\delta_m|x|)$. For any $\vartheta\in (0,1)$, with the aid of Proposition \ref{propo1}, it's easy to check
\begin{align}\label{e54}
 &\sum\limits_{i=0}^N\sum\limits_{j=1}^kc_{ij}\big\langle\big(\partial _{t_m}\Psi_{\delta_j,\xi_j}^i,\partial _{t_m}\Phi^i_{\delta_j,\xi_j}\big),\big(\Psi_{\varepsilon,\bar{t},\bar{\xi}},\Phi_{\varepsilon,\bar{t},\bar{\xi}}\big)\big\rangle_h\nonumber\\
  \leq&\frac{1}{2t_m}\sum\limits_{i=0}^N\sum\limits_{j=1}^kc_{ij}\delta_{jm}\Big(\big\|\partial_\delta\big(\delta^{-\frac{N}{q+1}}\Psi^i_{1,0}(\delta^{-1}y)\big)\big|_{\delta=1}\big\|_{\dot{W}^{1,p^*}(\mathbb{R}^N)}\|\nabla _g \Phi_{\varepsilon,\bar{t},\bar{\xi}}\|_{q^*}\nonumber\\
 &+\big\|\partial_\delta\big(\delta^{-\frac{N}{p+1}}\Phi^i_{1,0}(\delta^{-1}y)\big)\big|_{\delta=1}\big\|_{\dot{W}^{1,q^*}(\mathbb{R}^N)}\|\nabla _g \Psi_{\varepsilon,\bar{t},\bar{\xi}}\|_{p^*}\Big)\nonumber\\=&o\big(\varepsilon^{\vartheta}\big),
\end{align}
and
\begin{align}\label{e55}
&\sum\limits_{i=0}^N\sum\limits_{j=1}^kc_{ij}\big\langle\big(\partial _{y^m_l}\Psi_{\delta_j,\xi_j(y^j)}^i\big|_{y=0},\partial _{y^m_l}\Phi^i_{\delta_j,\xi_j(y^j)}\big|_{y=0}\big),
   \big(\Psi_{\varepsilon,\bar{t},\bar{\xi}},\Phi_{\varepsilon,\bar{t},\bar{\xi}}\big)\big\rangle_h\nonumber\\
  \leq&\frac{1}{\delta_m}\sum\limits_{i=0}^N\sum\limits_{j=1}^kc_{ij}\delta_{jm}\Big(\big\|\partial_{y_l} \Psi^i_{1,0} \big\|_{\dot{W}^{1,p^*}(\mathbb{R}^N)}\|\nabla _g \Phi_{\varepsilon,\bar{t},\bar{\xi}}\|_{q^*}+\big\|\partial _{y_l}\Phi^i_{1,0}\big\|_{\dot{W}^{1,q^*}(\mathbb{R}^N)}\|\nabla _g \Psi_{\varepsilon,\bar{t},\bar{\xi}}\|_{p^*}\Big)\nonumber\\
 =&o\big(\varepsilon^{\vartheta}\big).
\end{align}
Therefore, by \eqref{e50} and \eqref{e51}-\eqref{e55}, we deduce that the linear system in \eqref{com} has only a trivial solution when $y=0$ provided that $\varepsilon>0$ small enough. This ends the proof.
\end{proof}

In the next lemma, we give the asymptotic expansion of $ \mathcal{J}_{\varepsilon}(\mathcal{W}_{\bar{\delta},\bar{\xi}},\mathcal{H}_{\bar{\delta},\bar{\xi}})$ as $\varepsilon\rightarrow0$ for $(\bar{\delta},\bar{\xi})\in \Lambda$, where $\bar{\delta}$ is as in \eqref{solu'}.

\begin{lemma}\label{e5.1}
Under the assumptions on $p,q$ and $N$ of Theorem \ref{th}, if  $(\bar{\delta},\bar{\xi})\in \Lambda$ and $\bar{\delta}$ is as in \eqref{solu'}, then 
there holds
\begin{align*}
  \mathcal{J}_{\varepsilon}(\mathcal{W}_{\bar{\delta},\bar{\xi}},\mathcal{H}_{\bar{\delta},\bar{\xi}})=&\frac{2k}{N}L_1+c_1\varepsilon-c_2\varepsilon  \log \varepsilon +\Psi_k(\bar{t},\bar{\xi})\varepsilon +o(\varepsilon),
\end{align*}
as $\varepsilon\rightarrow0$, $C^1$-uniformly with respect to $\bar{\xi}$ in $\mathcal{M}^k$ and to $\bar{t}$ in compact subsets of $(\mathbb{R}^+)^k$, where the function $\Psi_k(\bar{t},\bar{\xi})$ is defined as \eqref{deffa}, $c_1$ and $c_2$ are given in (\ref{defc1c2}).
\end{lemma}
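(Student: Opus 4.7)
The plan is to expand the four constitutive pieces of $\mathcal{J}_\varepsilon(\mathcal{W}_{\bar\delta,\bar\xi},\mathcal{H}_{\bar\delta,\bar\xi})$ separately, pass to geodesic normal coordinates centered at each $\xi_j$ via the exponential map, and collect the leading term and the $O(\varepsilon)$ corrections. Since $d_g(\xi_j,\xi_m)>2r_0$ whenever $j\neq m$, the cut-off in the definition \eqref{fun1} implies $W_{\delta_j,\xi_j}$ and $H_{\delta_m,\xi_m}$ have disjoint supports for $j\neq m$, so every integral on $\mathcal M$ splits as a sum of $k$ single-bubble integrals plus exponentially small cross terms. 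Thus it suffices to analyse one bubble at a time.

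For the quadratic piece $\int_\mathcal{M}\nabla_g W_{\delta_j,\xi_j}\cdot\nabla_g H_{\delta_j,\xi_j}\,dv_g+\int_\mathcal{M} h\,W_{\delta_j,\xi_j}H_{\delta_j,\xi_j}\,dv_g$, I would change variables $x=\exp_{\xi_j}(\delta_j z)$, expand $g_{\delta_j,\xi_j}^{ij}(z)=\mathrm{Eucl}^{ij}-\tfrac{\delta_j^2}{3}R_{ikjl}(\xi_j)z^k z^l+O(\delta_j^4|z|^4)$ and $\sqrt{\det g_{\delta_j,\xi_j}(z)}=1-\tfrac{\delta_j^2}{6}R_{ab}(\xi_j)z^a z^b+O(\delta_j^4|z|^4)$, and integrate. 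Using radial symmetry of $(U_{1,0},V_{1,0})$ together with $\int z^a z^b f(|z|)\,dz=\tfrac{1}{N}\delta^{ab}\int|z|^2 f(|z|)\,dz$, the Ricci tensor is traced and the scalar curvature $\mathrm{Scal}_g(\xi_j)$ emerges. The leading term is $L_1$ and the $\delta_j^2$ correction produces $-\tfrac{\mathrm{Scal}_g(\xi_j)}{6N}L_2$. The potential piece contributes $\delta_j^2\,h(\xi_j)\,L_3+o(\delta_j^2)$. With $\delta_j^2=\varepsilon t_j$ these give exactly $\varepsilon t_j L_3\varphi(\xi_j)$ modulo the analogous curvature corrections from the nonlinear terms.

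For the two nonlinear pieces I would write, with $y=\delta_j^{-1}\exp_{\xi_j}^{-1}(x)$,
\[
\int_\mathcal{M} H_{\delta_j,\xi_j}^{p+1-\alpha\varepsilon}\,dv_g
=\delta_j^{\tfrac{N\alpha\varepsilon}{p+1}}\!\!\int_{B(0,r_0/\delta_j)}\!\!\chi_{\delta_j}^{p+1-\alpha\varepsilon}V_{1,0}^{p+1-\alpha\varepsilon}\sqrt{\det g_{\delta_j,\xi_j}}\,dz,
\]
and expand $\delta_j^{\tfrac{N\alpha\varepsilon}{p+1}}=1+\tfrac{N\alpha\varepsilon}{2(p+1)}\log(\varepsilon t_j)+O(\varepsilon^2\log^2\varepsilon)$, $V_{1,0}^{p+1-\alpha\varepsilon}=V_{1,0}^{p+1}-\alpha\varepsilon V_{1,0}^{p+1}\log V_{1,0}+O(\varepsilon^2)$, $\tfrac{1}{p+1-\alpha\varepsilon}=\tfrac{1}{p+1}+\tfrac{\alpha\varepsilon}{(p+1)^2}+O(\varepsilon^2)$, and similarly use the volume-element expansion to pick up the $L_4\,\mathrm{Scal}_g(\xi_j)$ contribution. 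Combined with the analogous expansion for the $q$-term (producing $L_5$, $L_7$, and the $\beta/(q+1)$ factors), recalling the Pohozaev-type identity $\int V_{1,0}^{p+1}=\int U_{1,0}^{q+1}=L_1$, and grouping terms as (i) $\varepsilon\log\varepsilon$-coefficient, (ii) $\varepsilon\log t_j$-coefficient, (iii) $\varepsilon t_j$-coefficient through $\varphi(\xi_j)$, and (iv) the remaining $\varepsilon$-independent-of-$(\bar t,\bar\xi)$ constants, yields precisely the claimed expansion with $c_1,c_2$ as in \eqref{defc1c2} and $\Psi_k$ as in \eqref{deffa}.

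The main technical obstacle is verifying integrability of all the remainder terms at infinity under hypotheses $(i)$–$(iii)$ — especially in case $(iii)$, where $U_{1,0}$ decays slowly like $r^{2-(N-2)p}$ (Lemma \ref{jian1}) and one needs $N\geq12$ to make $\int|z|^2 U_{1,0}^{q+1}$ and the associated log-integrals finite — and in showing that the estimate $o(\varepsilon)$ is $C^1$-uniform. The latter I would handle by differentiating under the integral sign: $\partial_{t_j}$ acts like $\tfrac{1}{2t_j}$ times a bubble-rescaling derivative (cf.\ \eqref{ch1}), while $\partial_{\xi_j}$ produces only an $O(\delta_j)=O(\varepsilon^{1/2})$ factor through the exponential chart (cf.\ \eqref{ch2}); this, together with uniform integrability of the error terms above, promotes the $C^0$-estimate to a $C^1$-estimate on compact subsets of $(\mathbb R^+)^k\times\mathcal M^k$.
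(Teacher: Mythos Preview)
Your proposal is correct and follows essentially the same route as the paper: separate the bubbles using disjoint supports, pass to geodesic normal coordinates around each $\xi_j$, expand the volume element to second order to extract the $\mathrm{Scal}_g(\xi_j)$ corrections (producing $L_2,L_4,L_5$), Taylor-expand the exponent-perturbed nonlinearities to extract the $\log\delta_j$ and $L_6,L_7$ terms, and verify $C^1$-uniformity by explicit differentiation in $t_j$ and $\xi_j$. The only cosmetic difference is that the paper invokes the spherical-mean volume formula $\frac{1}{\omega_{N-1}r^{N-1}}\int_{\partial B(\xi,r)}d\sigma_g=1-\frac{1}{6N}\mathrm{Scal}_g(\xi)r^2+O(r^4)$ directly, whereas you expand $g^{ij}$ and $\sqrt{\det g}$ separately and then trace via radial symmetry; also, since the supports of distinct bubbles are genuinely disjoint the cross terms are exactly zero, not merely exponentially small.
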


\begin{proof}
For any $\xi\in \mathcal{M}$, there holds
\begin{equation*}
  \frac{1}{\omega_{N-1}r^{N-1}}\int\limits_{\partial B(\xi,r)}d\sigma_g=1-\frac{1}{6N}Scal_g(\xi)r^2+O(r^4)
\end{equation*}
as $r\rightarrow0$, where $\omega_{N-1}$ is  the volume of the unit sphere in $\mathbb{R}^N$. Furthermore, by standard properties of the exponential map, the reminder $O(r^4)$ can be made $C^1$-uniform with respect to $\xi$. Under the assumptions on $p,q$ and $N$ of Theorem \ref{th},  we can compute
\begin{align}\label{ener1}
  &\int\limits_{\mathcal{M}}\nabla _g \Big(\sum \limits_{j=1}^kW_{\delta_j,\xi_j}\Big)\cdot \nabla _g \Big(\sum \limits_{j=1}^kH_{\delta_j,\xi_j}\Big)d v_g=\sum \limits_{j=1}^k\int\limits_{\mathcal{M}}\nabla _g W_{\delta_j,\xi_j}\cdot \nabla _g H_{\delta_j,\xi_j}d v_g\nonumber\\
  =&\sum \limits_{j=1}^k\Big[\int\limits_{B(0,r_0/2
  \delta_j)}\nabla _{g_{\delta_j,\xi_j}}U_{1,0}\cdot \nabla _{g_{\delta_j,\xi_j}}V_{1,0}\Big(1-\frac{1}{6N}Scal_g(\xi_j)\delta_j^2|z|^2+O(\delta_j^4|z|^4)\Big)dz\nonumber\\
  &+\int\limits_{B(r_0/
  \delta_j)\backslash B(r_0/2
  \delta_j)}\nabla _{g_{\delta_j,\xi_j}}(\chi_{\delta_j}U_{1,0})\cdot \nabla _{g_{\delta_j,\xi_j}}(\chi_{\delta_j}V_{1,0})\Big(1-\frac{1}{6N}Scal_g(\xi_j)\delta^2_j|z|^2+O(\delta^4_j|z|^4)\Big)dz\Big]\nonumber\\
  =&\int\limits_{\mathbb{R}^N}\nabla _{g_{\delta_j,\xi_j}}U_{1,0}\cdot \nabla _{g_{\delta_j,\xi_j}}V_{1,0}\Big(1-\frac{1}{6N}Scal_g(\xi_j)\delta^2_j|z|^2+O(\delta^4_j|z|^4)\Big)dz\nonumber\\
  &-\int\limits_{B^c(0,r_0/2
  \delta_j)}\nabla _{g_{\delta_j,\xi_j}}U_{1,0}\cdot \nabla _{g_{\delta_j,\xi_j}}V_{1,0}\Big(1-\frac{1}{6N}Scal_g(\xi_j)\delta^2_j|z|^2+O(\delta^4_j|z|^4)\Big)dz\nonumber\\
  &+\int\limits_{B(r_0/
  \delta_j)\backslash B(r_0/2
  \delta_j)}\nabla _{g_{\delta_j,\xi_j}}(\chi_{\delta_j}U_{1,0})\cdot \nabla _{g_{\delta_j,\xi_j}}(\chi_{\delta_j}V_{1,0})\Big(1-\frac{1}{6N}Scal_g(\xi_j)\delta^2_j|z|^2+O(\delta^4_j|z|^4)\Big)dz\nonumber\\
  =&kL_1-\sum \limits_{j=1}^k\Big\{\frac{L_2Scal_g(\xi_j)}{6N}\delta^2_j+o(\delta^2_j)\Big\},
\end{align}
\begin{equation}\label{ener2}
 \frac{d}{dt} \Big\{\int\limits_{\mathcal{M}}\nabla _g \Big(\sum \limits_{j=1}^kW_{\delta_j,\xi_j}\Big)\cdot \nabla _g \Big(\sum \limits_{j=1}^kH_{\delta_j,\xi_j}\Big)d v_g\Big\}=-\sum \limits_{j=1}^k\Big\{\frac{L_2Scal_g(\xi_j)}{3N}\delta_j\delta'_j+o(\delta_j\delta'_j)\Big\},
\end{equation}
and
\begin{align}\label{ener3}
  &\int\limits_{\mathcal{M}}h\Big(\sum \limits_{j=1}^kW_{\delta_j,\xi_j}\Big)\Big(\sum \limits_{j=1}^kH_{\delta_j,\xi_j}\Big)d v_g=
  \sum \limits_{j=1}^k\int\limits_{\mathcal{M}}hW_{\delta_j,\xi_j}H_{\delta_j,\xi_j}d v_g\nonumber\\=&
  \sum \limits_{j=1}^k\Big\{\delta^2_j\int\limits_{\mathbb{R}^N}h_{\delta_j,\xi_j}U_{1,0} V_{1,0}\big(1+\delta^2_j|z|^2\big)dz-\delta^2_j\int\limits_{B^c(0,r_0/2
  \delta_j)}h_{\delta_j,\xi_j}U_{1,0} V_{1,0}\big(1+\delta^2_j|z|^2\big)dz\nonumber\\
  &+\delta_j^2\int\limits_{B(r_0/
  \delta_j)\backslash B(r_0/2
  \delta_j)}h_{\delta_j,\xi_j}\chi^2_{\delta_j}U_{1,0} V_{1,0}\big(1+\delta^2_j|z|^2\big)dz\Big\}\nonumber\\
  =&\sum \limits_{j=1}^k\big\{L_3h(\xi_j)\delta^2_j+o(\delta^2_j)\big\},
\end{align}
\begin{equation}\label{ener4}
  \frac{d}{dt}\Big\{\int\limits_{\mathcal{M}}h\Big(\sum \limits_{j=1}^kW_{\delta_j,\xi_j}\Big)\Big(\sum \limits_{j=1}^kH_{\delta_j,\xi_j}\Big)d v_g\Big\}=\sum \limits_{j=1}^k\big\{2L_3h(\xi_j)\delta_j\delta'_j+o(\delta_j\delta'_j)\big\},
\end{equation}
as $\varepsilon\rightarrow0$, $C^1$-uniformly with respect to $\bar{\xi}$ in $\mathcal{M}^k$ and to $\bar{t}$ in compact subsets of $(\mathbb{R}^+)^k$, where $g_{\delta_j,\xi_j}(z)=\exp_{\xi_j}^*g(\delta_jz)$, $\chi_{\delta_j}(z)=\chi({\delta_j|z|})$, and $h_{\delta_j,\xi_j}(z)=h(\exp_{\xi_j}(\delta_jz))$.
Using the Taylor formula, we have
\begin{align}\label{ener5}
 & \frac{1}{p+1-\alpha\varepsilon}\int\limits_{\mathcal{M}}
 \Big(\sum \limits_{j=1}^kH_{\delta_j,\xi_j}\Big)^{p+1-\alpha\varepsilon}d v_g=\sum \limits_{j=1}^k\frac{1}{p+1-\alpha\varepsilon}\int\limits_{\mathcal{M}}
 H_{\delta_j,\xi_j}^{p+1-\alpha\varepsilon}d v_g\nonumber\\=&\sum \limits_{j=1}^k\Big\{\frac{1}{p+1}\int\limits_{\mathcal{M}}
  H_{\delta_j,\xi_j}^{p+1}d v_g+\alpha\varepsilon \int\limits_{\mathcal{M}}
  \Big[\frac{H_{\delta_j,\xi_j}^{p+1}}{(p+1)^2}-\frac{H_{\delta_j,\xi_j}^{p+1}\log H_{\delta_j,\xi_j}}{p+1}\Big]d v_g+o(\delta^2_j)\Big\}\nonumber\\
  =&\sum \limits_{j=1}^k\Big\{\frac{1}{p+1}\int\limits_{\mathbb{R}^N}
  V_{1,0}^{p+1}\Big(1-\frac{1}{6N}Scal_g(\xi_j)\delta^2_j|z|^2+O(\delta^4_j|z|^4)\Big)dz\nonumber\\&-\frac{1}{p+1}\int\limits_{B^c(0,r_0/2
  \delta_j)}
  V_{1,0}^{p+1}\Big(1-\frac{1}{6N}Scal_g(\xi_j)\delta^2_j|z|^2+O(\delta^4_j|z|^4)\Big)dz\nonumber\\
  &+\frac{1}{p+1}\int\limits_{B(r_0/
  \delta_j)\backslash B(r_0/2
  \delta_j)}\chi_{\delta_j}^{p+1}V_{1,0}^{p+1}\Big(1-\frac{1}{6N}Scal_g(\xi_j)\delta^2_j|z|^2+O(\delta^4_j|z|^4)\Big)dz\nonumber\\
  &+\frac{\alpha\varepsilon}{(p+1)^2}\int\limits_{\mathbb{R}^N}V_{1,0}^{p+1}\big(1+\delta^2_j|z|^2\big)dz-\frac{\alpha\varepsilon}{(p+1)^2}\int\limits_{B^c(0,r_0/2
  \delta_j)}V_{1,0}^{p+1}\big(1+\delta^2_j|z|^2\big)dz\nonumber\\
  &+\frac{\alpha\varepsilon}{(p+1)^2}\int\limits_{B(r_0/
  \delta_j)\backslash B(r_0/2
  \delta_j)}\chi_{\delta_j}^{p+1}V_{1,0}^{p+1}\big(1+\delta^2_j|z|^2\big)dz\nonumber\\
  &-\frac{\alpha\varepsilon}{p+1}\int\limits_{\mathbb{R}^N}V_{1,0}^{p+1}\log \big(\delta_j^{-\frac{N}{p+1}}V_{1,0}\big)\big(1+\delta^2_j|z|^2\big)dz+\frac{\alpha\varepsilon}{p+1}\int\limits_{B^c(0,r_0/2
  \delta_j)}V_{1,0}^{p+1}\log \big(\delta_j^{-\frac{N}{p+1}}V_{1,0}\big)\big(1+\delta^2_j|z|^2\big)dz\nonumber\\
  &-\frac{\alpha\varepsilon}{p+1}\int\limits_{B(r_0/
  \delta_j)\backslash B(r_0/2
  \delta_j)}\chi_{\delta_j}^{p+1}V_{1,0}^{p+1}\log \big(\chi_{\delta_j}\delta_j^{-\frac{N}{p+1}}V_{1,0}\big)\big(1+\delta^2_j|z|^2\big)dz+o(\delta^2_j)\Big\}\nonumber\\
  =&\frac{kL_1}{p+1}+\frac{kL_1\alpha }{(p+1)^2} \varepsilon-\frac{kL_6\alpha}{p+1}\varepsilon+\sum \limits_{j=1}^k\Big\{-\frac{L_4Scal_g(\xi_j)}{6N(p+1)}\delta^2_j+\frac{NL_1\alpha}{(p+1)^2}\varepsilon\log {\delta_j}+o(\delta^2_j)\Big\},
\end{align}
and
\begin{equation}\label{ener6}
  \frac{d}{dt}\Big\{\frac{1}{p+1-\alpha\varepsilon}\int\limits_{\mathcal{M}}
  \Big(\sum \limits_{j=1}^kH_{\delta_j,\xi_j}\Big)^{p+1-\alpha\varepsilon}d v_g\Big\}=\sum \limits_{j=1}^k\Big\{-\frac{L_4Scal_g(\xi_j)}{3N(p+1)}\delta_j\delta'_j+\frac{NL_1\alpha\delta'_j\varepsilon}{(p+1)^2\delta_j} +o(\delta_j\delta'_j)\Big\},
\end{equation}
as $\varepsilon\rightarrow0$, $C^1$-uniformly with respect to $\bar{\xi}$ in $\mathcal{M}^k$ and to $\bar{t}$ in compact subsets of $(\mathbb{R}^+)^k$.
Similarly, we can prove that
\begin{align}\label{ener7}
  &\frac{1}{q+1-\beta\varepsilon}\int\limits_{\mathcal{M}}
  \Big(\sum \limits_{j=1}^kW_{\delta_j,\xi_j}\Big)^{q+1-\beta\varepsilon}d v_g\nonumber\\
  =&\frac{kL_1}{q+1}+\frac{kL_1\beta }{(q+1)^2} \varepsilon-\frac{kL_7\beta }{q+1}\varepsilon+\sum \limits_{j=1}^k\Big\{-\frac{L_5Scal_g(\xi_j)}{6N(q+1)}\delta^2_j+\frac{NL_1\beta}{(q+1)^2}\varepsilon\log {\delta_j}+o(\delta^2_j)\Big\},
\end{align}
and
\begin{equation}\label{ener8}
  \frac{d}{dt}\Big(\frac{1}{q+1-\beta\varepsilon}\int\limits_{\mathcal{M}}
  \Big(\sum \limits_{j=1}^kW_{\delta_j,\xi_j}\Big)^{q+1-\beta\varepsilon}d v_g\Big)=\sum \limits_{j=1}^k\Big\{-\frac{L_5Scal_g(\xi_j)}{3N(q+1)}\delta_j\delta'_j+\frac{NL_1\beta\delta'_j\varepsilon}{(q+1)^2\delta_j} +o(\delta_j\delta'_j)\Big\},
\end{equation}
as $\varepsilon\rightarrow0$, $C^1$-uniformly with respect to $\bar{\xi}$ in $\mathcal{M}^k$ and to $\bar{t}$ in compact subsets of $(\mathbb{R}^+)^k$, where we have used the fact that $N\geq10$ if $p=\frac{N}{N-2}$ and $N\geq12$ if $p<\frac{N}{N-2}$.
From \eqref{ener1}-\eqref{ener8}, we conclude the result.
\end{proof}
We now give the asymptotic expansion of the function $\widetilde{\mathcal{J}}_\varepsilon$ defined in \eqref{defj} as $\varepsilon\rightarrow0$.
\begin{lemma}\label{topr}
Under the assumptions on $p,q$ and $N$ of Theorem \ref{th}, if  $(\bar{\delta},\bar{\xi})\in \Lambda$ and $\bar{\delta}$ is as in \eqref{solu'}, then there holds
\begin{align*}
 \widetilde{\mathcal{J}}_{\varepsilon}(\bar{t},\bar{\xi})= \mathcal{J}_{\varepsilon}(\mathcal{W}_{\bar{\delta},\bar{\xi}},\mathcal{H}_{\bar{\delta},\bar{\xi}})+o(\varepsilon),
\end{align*}
as $\varepsilon\rightarrow0$,  $C^0$-uniformly with respect to $\bar{\xi}$ in $\mathcal{M}^k$ and to $\bar{t}$ in compact subsets of $(\mathbb{R}^+)^k$.
\end{lemma}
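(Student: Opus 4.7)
The plan is to Taylor expand $\mathcal{J}_\varepsilon$ at $(\mathcal{W}_{\bar{\delta},\bar{\xi}},\mathcal{H}_{\bar{\delta},\bar{\xi}})$ and show, using $\|(\Psi_{\varepsilon,\bar{t},\bar{\xi}},\Phi_{\varepsilon,\bar{t},\bar{\xi}})\|\leq C\varepsilon|\log\varepsilon|$ from Proposition \ref{propo1} and $\|\mathcal{R}_{\varepsilon,\bar{t},\bar{\xi}}\|\leq C\varepsilon|\log\varepsilon|$ from Lemma \ref{error}, that every correction term is $o(\varepsilon)$. Concretely, I would decompose
$$\widetilde{\mathcal{J}}_\varepsilon(\bar{t},\bar{\xi}) - \mathcal{J}_\varepsilon(\mathcal{W}_{\bar{\delta},\bar{\xi}},\mathcal{H}_{\bar{\delta},\bar{\xi}}) = \mathcal{J}_\varepsilon'(\mathcal{W}_{\bar{\delta},\bar{\xi}},\mathcal{H}_{\bar{\delta},\bar{\xi}})(\Psi_{\varepsilon,\bar{t},\bar{\xi}},\Phi_{\varepsilon,\bar{t},\bar{\xi}}) + \mathcal{Q}_\varepsilon,$$
where $\mathcal{Q}_\varepsilon$ collects the bilinear piece $\int_{\mathcal{M}}(\nabla_g\Psi_{\varepsilon,\bar{t},\bar{\xi}}\cdot\nabla_g\Phi_{\varepsilon,\bar{t},\bar{\xi}} + h\Psi_{\varepsilon,\bar{t},\bar{\xi}}\Phi_{\varepsilon,\bar{t},\bar{\xi}})dv_g$ together with the second-order Taylor remainders of the two nonlinear integrands.

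For the linear term, I rewrite it via the definition of $\mathcal{I}^*$ as
$$\mathcal{J}_\varepsilon'(\mathcal{W}_{\bar{\delta},\bar{\xi}},\mathcal{H}_{\bar{\delta},\bar{\xi}})(\Psi,\Phi) = \langle(\mathcal{W}_{\bar{\delta},\bar{\xi}},\mathcal{H}_{\bar{\delta},\bar{\xi}}) - \mathcal{I}^*(f_\varepsilon(\mathcal{H}_{\bar{\delta},\bar{\xi}}),g_\varepsilon(\mathcal{W}_{\bar{\delta},\bar{\xi}})),(\Psi,\Phi)\rangle_h.$$
Since $(\Psi,\Phi)\in\mathcal{Z}_{\bar{\delta},\bar{\xi}}$ is $\langle\cdot,\cdot\rangle_h$-orthogonal to $\mathcal{Y}_{\bar{\delta},\bar{\xi}}$, only the $\Pi_{\bar{\delta},\bar{\xi}}^\bot$ component of the first argument contributes, and by \eqref{defr} this component equals $-\mathcal{R}_{\varepsilon,\bar{t},\bar{\xi}}$. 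Using the Calder\'on-Zygmund estimate together with the embeddings \eqref{embedd} and the identity $\frac{1}{p^*}+\frac{1}{q^*}=1$, the pairing $\langle\cdot,\cdot\rangle_h$ satisfies $|\langle(u,v),(\varphi,\psi)\rangle_h|\leq C\|(u,v)\|\,\|(\varphi,\psi)\|$ on $\mathcal{X}_{p,q}(\mathcal{M})$, hence
$$\bigl|\mathcal{J}_\varepsilon'(\mathcal{W}_{\bar{\delta},\bar{\xi}},\mathcal{H}_{\bar{\delta},\bar{\xi}})(\Psi_{\varepsilon,\bar{t},\bar{\xi}},\Phi_{\varepsilon,\bar{t},\bar{\xi}})\bigr| \leq C\|\mathcal{R}_{\varepsilon,\bar{t},\bar{\xi}}\|\,\|(\Psi_{\varepsilon,\bar{t},\bar{\xi}},\Phi_{\varepsilon,\bar{t},\bar{\xi}})\| \leq C\varepsilon^2|\log\varepsilon|^2 = o(\varepsilon).$$

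For $\mathcal{Q}_\varepsilon$, the bilinear piece is again bounded by $C\|(\Psi_{\varepsilon,\bar{t},\bar{\xi}},\Phi_{\varepsilon,\bar{t},\bar{\xi}})\|^2$ by the same duality. The nonlinear $v$-remainder
$$R_p = \frac{1}{p+1-\alpha\varepsilon}\int_{\mathcal{M}}\bigl[(\mathcal{H}_{\bar{\delta},\bar{\xi}}+\Phi)^{p+1-\alpha\varepsilon}-\mathcal{H}_{\bar{\delta},\bar{\xi}}^{p+1-\alpha\varepsilon}-(p+1-\alpha\varepsilon)\mathcal{H}_{\bar{\delta},\bar{\xi}}^{p-\alpha\varepsilon}\Phi\bigr]dv_g$$
is handled by applying the mean value theorem once and then Lemma \ref{gs} to $|(\mathcal{H}_{\bar{\delta},\bar{\xi}}+\theta\Phi)^{p-\alpha\varepsilon}-\mathcal{H}_{\bar{\delta},\bar{\xi}}^{p-\alpha\varepsilon}|$, yielding, after H\"older and the Sobolev embedding $\|\Phi\|_{p+1}\leq C\|(\Psi,\Phi)\|$,
$$|R_p|\leq C\|\mathcal{H}_{\bar{\delta},\bar{\xi}}\|_{p+1}^{p-1-\alpha\varepsilon}\|\Phi\|_{p+1}^2 + C\|\Phi\|_{p+1}^{p+1-\alpha\varepsilon}.$$
Since $\|\mathcal{H}_{\bar{\delta},\bar{\xi}}\|_{p+1}$ is uniformly bounded on $\Lambda$ (the bubbles being well separated and scale invariant in $L^{p+1}$) and $p+1-\alpha\varepsilon>2$ for small $\varepsilon$, both summands are $O(\|(\Psi,\Phi)\|^2)=O(\varepsilon^2|\log\varepsilon|^2)$; the symmetric estimate for $R_q$ follows identically using $q\geq\frac{N+2}{N-2}>2$.

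The main technical point will be the careful bookkeeping of the two-step Lemma \ref{gs} argument for $R_p$ and $R_q$ near the threshold $p+1-\alpha\varepsilon=2$, and checking that the constants depend only on $\|h\|_\infty$, the Sobolev constants, and the bounded quantity $\|(\mathcal{W}_{\bar{\delta},\bar{\xi}},\mathcal{H}_{\bar{\delta},\bar{\xi}})\|_{L^{q+1}\times L^{p+1}}$; this uniformity yields the claim in $\bar{\xi}\in\mathcal{M}^k$ and $\bar{t}$ on compact subsets of $(\mathbb{R}^+)^k$. Summing the linear contribution $o(\varepsilon)$ with $\mathcal{Q}_\varepsilon=o(\varepsilon)$ gives $\widetilde{\mathcal{J}}_\varepsilon(\bar{t},\bar{\xi})=\mathcal{J}_\varepsilon(\mathcal{W}_{\bar{\delta},\bar{\xi}},\mathcal{H}_{\bar{\delta},\bar{\xi}})+o(\varepsilon)$ as required. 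Note that only $C^0$-uniformity is claimed, so no differentiation of the remainder in $(\bar{t},\bar{\xi})$ is needed.
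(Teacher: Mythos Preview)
Your proposal is correct and follows essentially the same route as the paper: decompose the difference into the linear piece $\mathcal{J}_\varepsilon'(\mathcal{W}_{\bar{\delta},\bar{\xi}},\mathcal{H}_{\bar{\delta},\bar{\xi}})(\Psi,\Phi)$, the bilinear piece, and the two second-order nonlinear remainders, and bound each by $O\big((\varepsilon|\log\varepsilon|)^2\big)=o(\varepsilon)$ using Proposition~\ref{propo1}, Lemma~\ref{error}, Lemma~\ref{gs}, H\"older, and the embeddings \eqref{embedd}. The only stylistic difference is that you handle the linear term structurally, identifying it with $-\langle\mathcal{R}_{\varepsilon,\bar{t},\bar{\xi}},(\Psi,\Phi)\rangle_h$ via the $\langle\cdot,\cdot\rangle_h$-orthogonality of $\mathcal{Z}_{\bar{\delta},\bar{\xi}}$ to $\mathcal{Y}_{\bar{\delta},\bar{\xi}}$, whereas the paper writes it out as $\int_{\mathcal{M}}(-\Delta_g\mathcal{W}_{\bar{\delta},\bar{\xi}}+h\mathcal{W}_{\bar{\delta},\bar{\xi}}-f_\varepsilon(\mathcal{H}_{\bar{\delta},\bar{\xi}}))\Phi\,dv_g$ (and its companion) and applies H\"older directly; both yield the same $O(\varepsilon^2|\log\varepsilon|^2)$ bound.
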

\begin{proof}
It's easy to verify
\begin{align*}
  &\widetilde{\mathcal{J}}_{\varepsilon}(\bar{t},\bar{\xi})- \mathcal{J}_{\varepsilon}(\mathcal{W}_{\bar{\delta},\bar{\xi}},\mathcal{H}_{\bar{\delta},\bar{\xi}})\\
  =&\int\limits_{\mathcal{M}}\big(-\Delta_g \mathcal{W}_{\bar{\delta},\bar{\xi}}+h\mathcal{W}_{\bar{\delta},\bar{\xi}}-f_\varepsilon(\mathcal{H}_{\bar{\delta},\bar{\xi}})\big)\Phi_{\varepsilon,\bar{t},\bar{\xi}}d v_g+\int\limits_{\mathcal{M}}\big(-\Delta_g \mathcal{H}_{\bar{\delta},\bar{\xi}}+h\mathcal{H}_{\bar{\delta},\bar{\xi}}-g_\varepsilon(\mathcal{W}_{\bar{\delta},\bar{\xi}})\big)\Psi_{\varepsilon,\bar{t},\bar{\xi}}d v_g\\
  &+\int\limits_{\mathcal{M}}\big(\nabla_g \Psi_{\varepsilon,\bar{t},\bar{\xi}}\cdot \nabla _g \Phi_{\varepsilon,\bar{t},\xi}+h\Psi_{\varepsilon,\bar{t},\bar{\xi}}\Phi_{\varepsilon,\bar{t},\bar{\xi}}\big)dv_g-\int\limits_{\mathcal{M}}\big(F_\varepsilon(\mathcal{H}_{\bar{\delta},\bar{\xi}}+\Phi_{\varepsilon,\bar{t},\xi})-
  F_\varepsilon(\mathcal{H}_{\bar{\delta},\bar{\xi}})-f_\varepsilon(\mathcal{H}_{\bar{\delta},\bar{\xi}}) \Phi_{\varepsilon,\bar{t},\bar{\xi}}\big)dv_g\\
  &-\int\limits_{\mathcal{M}}\big(G_\varepsilon(\mathcal{W}_{\bar{\delta},\bar{\xi}}+\Psi_{\varepsilon,\bar{t},\xi})-
  G_\varepsilon(\mathcal{W}_{\bar{\delta},\bar{\xi}})-g_\varepsilon(\mathcal{W}_{\bar{\delta},\bar{\xi}}) \Psi_{\varepsilon,\bar{t},\bar{\xi}}\big)dv_g,
\end{align*}
where $F_\varepsilon(u)=\int\limits_{0}^uf_\varepsilon(s)ds$, $G_\varepsilon(u)=\int\limits_{0}^ug_\varepsilon(s)ds$. By the H\"{o}lder inequality, Proposition \ref{propo1}, Lemma \ref{error}, and \eqref{embedd}, for any $\vartheta\in (0,1)$, we get
\begin{align*}
  &\int\limits_{\mathcal{M}}\big(-\Delta_g  \mathcal{W}_{\bar{\delta},\bar{\xi}}+h\mathcal{W}_{\bar{\delta},\bar{\xi}}-f_\varepsilon(\mathcal{H}_{\bar{\delta},\bar{\xi}})\big)\Phi_{\varepsilon,\bar{t},\bar{\xi}}d v_g \leq
  \big\|-\Delta_g \mathcal{W}_{\bar{\delta},\bar{\xi}}+h\mathcal{W}_{\bar{\delta},\bar{\xi}}-f_\varepsilon(\mathcal{H}_{\bar{\delta},\bar{\xi}})\big\|_{\frac{p+1}{p}}\|\Phi_{\varepsilon,\bar{t},\bar{\xi}}\|_{p+1}=o(\varepsilon^{2\vartheta}),
\end{align*}
\begin{align*}
  &\int\limits_{\mathcal{M}}\big(-\Delta_g  \mathcal{H}_{\bar{\delta},\bar{\xi}}+h\mathcal{H}_{\bar{\delta},\bar{\xi}}-g_\varepsilon(\mathcal{W}_{\bar{\delta},\bar{\xi}})\big)\Psi_{\varepsilon,\bar{t},\xi}d v_g
  \leq
  \big\|-\Delta_g \mathcal{H}_{\bar{\delta},\bar{\xi}}+h\mathcal{H}_{\bar{\delta},\bar{\xi}}-g_\varepsilon(\mathcal{W}_{\bar{\delta},\bar{\xi}})\big\|_{\frac{q+1}{q}}\|\Psi_{\varepsilon,\bar{t},\bar{\xi}}\|_{q+1}=o(\varepsilon^{2\vartheta}),
\end{align*}
and
\begin{align*}
  &\int\limits_{\mathcal{M}}\big(\nabla_g \Psi_{\varepsilon,\bar{t},\bar{\xi}}\cdot \nabla _g \Phi_{\varepsilon,\bar{t},\bar{\xi}}+h\Psi_{\varepsilon,\bar{t},\bar{\xi}}\Phi_{\varepsilon,\bar{t},\bar{\xi}}\big)dv_g
   \leq \|\nabla_g \Psi_{\varepsilon,\bar{t},\bar{\xi}}\|_{p^*}
  \|\nabla_g \Phi_{\varepsilon,\bar{t},\bar{\xi}}\|_{q^*}+C\|\Psi_{\varepsilon,\bar{t},\bar{\xi}}\|_2\|\Phi_{\varepsilon,\bar{t},\bar{\xi}}\|_2=o(\varepsilon^{2\vartheta}),
\end{align*}
as $\varepsilon\rightarrow0$,  uniformly with respect to $\bar{\xi}$ in $\mathcal{M}^k$ and to $\bar{t}$ in compact subsets of $(\mathbb{R}^+)^k$.
Moreover, by the mean value formula, Lemma \ref{gs}, \eqref{ener5}, \eqref{ener7} and the Sobolev embedding theorem, for any $\vartheta\in (0,1)$, we obtain
\begin{align}\label{three}
  &\int\limits_{\mathcal{M}}\big(F_\varepsilon(\mathcal{H}_{\bar{\delta},\bar{\xi}}+\Phi_{\varepsilon,\bar{t},\bar{\xi}})-
  F_\varepsilon(\mathcal{H}_{\bar{\delta},\bar{\xi}})-f_\varepsilon(\mathcal{H}_{\bar{\delta},\bar{\xi}}) \Phi_{\varepsilon,\bar{t},\bar{\xi}}\big)dv_g
  \leq C\int\limits_{\mathcal{M}}\mathcal{H}_{\bar{\delta},\bar{\xi}}^{p-1-\alpha\varepsilon}\Phi^2_{\varepsilon,\bar{t},\bar{\xi}}dv_g+
  C\int\limits_{\mathcal{M}}\Phi_{\varepsilon,\bar{t},\bar{\xi}}^{p+1-\alpha\varepsilon}dv_g\nonumber\\
  \leq& C\|\Phi_{\varepsilon,\bar{t},\bar{\xi}}\|_{p+1-\alpha\varepsilon}^2\sum\limits_{j=1}^k\|H_{\delta_j,\xi_j}\|_{p+1-\alpha\varepsilon}^{p-1-\alpha\varepsilon}+C\|\Phi_{\varepsilon,\bar{t},\bar{\xi}}\|_{p+1-\alpha\varepsilon}^{p+1-\alpha\varepsilon}=o(\varepsilon^{2\vartheta}),
\end{align}
and
\begin{align}\label{four}
  &\int\limits_{\mathcal{M}}\big(G_\varepsilon(\mathcal{W}_{\bar{\delta},\bar{\xi}}+\Psi_{\varepsilon,\bar{t},\bar{\xi}})-
  G_\varepsilon(\mathcal{W}_{\bar{\delta},\bar{\xi}})-g_\varepsilon(\mathcal{W}_{\bar{\delta},\bar{\xi}}) \Psi_{\varepsilon,\bar{t},\bar{\xi}}\big)dv_g
  \leq C\int\limits_{\mathcal{M}}\mathcal{W}_{\bar{\delta},\bar{\xi}}^{q-1-\beta\varepsilon}\Psi^2_{\varepsilon,\bar{t},\bar{\xi}}dv_g+
  C\int\limits_{\mathcal{M}}\Psi_{\varepsilon,\bar{t},\bar{\xi}}^{q+1-\beta\varepsilon}dv_g\nonumber\\
  \leq& C\|\Psi_{\varepsilon,\bar{t},\bar{\xi}}\|_{q+1-\beta\varepsilon}^2\sum\limits_{j=1}^k\|W_{\delta_j,\xi_j}\|_{q+1-\beta\varepsilon}^{q-1-\beta\varepsilon}+
  C\|\Psi_{\varepsilon,\bar{t},\bar{\xi}}\|_{q+1-\beta\varepsilon}^{q+1-\beta\varepsilon}=o(\varepsilon^{2\vartheta}),
\end{align}
as $\varepsilon\rightarrow0$,  uniformly with respect to $\bar{\xi}$ in $\mathcal{M}^k$ and to $\bar{t}$ in compact subsets of $(\mathbb{R}^+)^k$. This ends the proof.
\end{proof}

Next, we estimate the gradient of the reduced energy.
\begin{lemma}\label{topr'}
Under the assumptions on $p,q$ and $N$ of Theorem \ref{th}, if  $(\bar{\delta},\bar{\xi})\in \Lambda$ and $\bar{\delta}$ is as in \eqref{solu'}, then for any $m=1,2,\cdots,k$, there holds
\begin{align*}
 \partial_{t_m}\widetilde{\mathcal{J}}_{\varepsilon}(\bar{t},\bar{\xi})=\partial_{t_m} \Psi_k(\bar{t},\bar{\xi})+o(\varepsilon),
\end{align*}
and set $\bar{\xi}(y)=\big(\exp_{\xi_1}(y^1),\exp_{\xi_2}(y^2),\cdots,\exp_{\xi_k}(y^k)\big)$, $y=(y^1,y^2,\cdots,y^k)\in B(0,r)^k$, for any $l=1,2,\cdots,N$, it holds that
\begin{equation*}
  \partial_{y^m_l}\widetilde{\mathcal{J}}_{\varepsilon}(\bar{t},\bar{\xi}(y))\big|_{y=0}=\partial_{y^m_l}\Psi_k(\bar{t},\bar{\xi}(y))\big|_{y=0}+o(\varepsilon),
\end{equation*}
as $\varepsilon\rightarrow0$,  $C^0$-uniformly with respect to $\bar{\xi}$ in $\mathcal{M}^k$ and to $\bar{t}$ in compact subsets of $(\mathbb{R}^+)^k$, where the function $\Psi_k(\bar{t},\bar{\xi})$ is defined as \eqref{deffa}.
\end{lemma}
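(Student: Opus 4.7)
The plan is to reduce the assertion to the $C^{1}$-uniform expansion of $\mathcal{J}_{\varepsilon}(\mathcal{W}_{\bar{\delta},\bar{\xi}},\mathcal{H}_{\bar{\delta},\bar{\xi}})$ given by Lemma \ref{e5.1}. Differentiating that expansion $C^{0}$-uniformly in $(\bar{t},\bar{\xi})$ yields
\[
\partial_{s}\mathcal{J}_{\varepsilon}(\mathcal{W}_{\bar{\delta},\bar{\xi}},\mathcal{H}_{\bar{\delta},\bar{\xi}})=\varepsilon\,\partial_{s}\Psi_{k}(\bar{t},\bar{\xi})+o(\varepsilon),\qquad s\in\{t_{m},y^{m}_{l}\},
\]
so the lemma is equivalent to $\partial_{s}E_{\varepsilon}=o(\varepsilon)$ with $E_{\varepsilon}:=\widetilde{\mathcal{J}}_{\varepsilon}-\mathcal{J}_{\varepsilon}(\mathcal{W}_{\bar{\delta},\bar{\xi}},\mathcal{H}_{\bar{\delta},\bar{\xi}})$. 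The quantity $E_{\varepsilon}$ is exactly the one for which Lemma \ref{topr} already supplies a $C^{0}$-bound $o(\varepsilon^{2\vartheta})$, $\vartheta\in(0,1)$; I only need to upgrade this to the $C^{1}$-level.

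I would start from the five-term decomposition of $E_{\varepsilon}$ used in the proof of Lemma \ref{topr} and differentiate each summand. The resulting pieces are of two flavours. When $\partial_{s}$ falls on the bubbles $(\mathcal{W}_{\bar{\delta},\bar{\xi}},\mathcal{H}_{\bar{\delta},\bar{\xi}})$ or on the residuals $\mathcal{R}_{\mathcal{W}}:=-\Delta_{g}\mathcal{W}+h\mathcal{W}-f_{\varepsilon}(\mathcal{H})$ and $\mathcal{R}_{\mathcal{H}}:=-\Delta_{g}\mathcal{H}+h\mathcal{H}-g_{\varepsilon}(\mathcal{W})$, I would repeat the scaling estimates of Lemma \ref{error}: the $t_{m}$-derivative only contributes an $O(1)$ rescaling, whereas the $y^{m}_{l}$-derivative introduces a $\delta_{m}^{-1}=O(\varepsilon^{-1/2})$ factor. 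When $\partial_{s}$ falls on $(\Psi_{\varepsilon,\bar{t},\bar{\xi}},\Phi_{\varepsilon,\bar{t},\bar{\xi}})$, the needed bound is obtained by implicitly differentiating the fixed-point identity $(\Psi,\Phi)=\mathcal{T}_{\varepsilon,\bar{t},\bar{\xi}}(\Psi,\Phi)$ of Proposition \ref{propo1} and invoking the uniform invertibility of $\mathcal{L}_{\varepsilon,\bar{t},\bar{\xi}}$ from Lemma \ref{line}, which reduces $\|\partial_{s}(\Psi,\Phi)\|$ to the previously controlled norms of $\partial_{s}\mathcal{R}_{\mathcal{W}}$, $\partial_{s}\mathcal{R}_{\mathcal{H}}$ (plus a lower-order correction $\|(\Psi,\Phi)\|\cdot O(\varepsilon^{\vartheta})$). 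For the $t_{m}$-case, these raw bounds, combined with H\"{o}lder's inequality and \eqref{embedd}, are already sufficient because no $\varepsilon^{-1/2}$ factor is present and the extra smallness of $(\Psi,\Phi)$ from Proposition \ref{propo1} absorbs all remainders into $o(\varepsilon)$.

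The main obstacle is the spatial-derivative case $s=y^{m}_{l}$, where the $\delta_{m}^{-1}$-blow-up prevents a purely H\"{o}lder-based argument from reaching $o(\varepsilon)$. To overcome it, I would exploit the key identity \eqref{ch2}, which at $y=0$ reads $\delta_{m}\,\partial_{y^{m}_{l}}(\mathcal{W}_{\bar{\delta},\bar{\xi}(y)},\mathcal{H}_{\bar{\delta},\bar{\xi}(y)})=(\Psi^{l}_{\delta_{m},\xi_{m}}+R_{1},\Phi^{l}_{\delta_{m},\xi_{m}}+R_{2})$ with $\|(R_{1},R_{2})\|=o(\varepsilon^{\vartheta/2})$, combined with the orthogonality $(\Psi_{\varepsilon,\bar{t},\bar{\xi}},\Phi_{\varepsilon,\bar{t},\bar{\xi}})\in\mathcal{Z}_{\bar{\delta},\bar{\xi}}$: the leading kernel component $(\Psi^{l}_{\delta_{m},\xi_{m}},\Phi^{l}_{\delta_{m},\xi_{m}})$ is annihilated by the $h$-pairing with $(\Psi,\Phi)$, so that only the $o(\varepsilon^{\vartheta/2})$ remainder contributes. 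Choosing $\vartheta$ close enough to $1$ recovers the missing power of $\sqrt{\varepsilon}$ and forces each dangerous term into $o(\varepsilon)$. The cross-terms in which $\partial_{y^{m}_{l}}$ instead hits $(\Psi,\Phi)$ are treated by the analogous differentiated identity coming from $\partial_{s}\langle(\Psi,\Phi),(\Psi^{i}_{\delta_{j},\xi_{j}},\Phi^{i}_{\delta_{j},\xi_{j}})\rangle_{h}=0$, already used in step 2 of the proof of Lemma \ref{line}. Collecting all these estimates yields $\partial_{s}E_{\varepsilon}=o(\varepsilon)$ and hence the lemma.
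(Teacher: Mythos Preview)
Your outline is conceptually sound and would lead to a proof, but it is organised quite differently from the paper's argument, and the part you flag as ``the main obstacle'' is handled by a different mechanism there.

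The paper does \emph{not} differentiate the five-term decomposition of $E_{\varepsilon}$ from Lemma~\ref{topr}, nor does it ever bound $\|\partial_{s}(\Psi_{\varepsilon,\bar t,\bar\xi},\Phi_{\varepsilon,\bar t,\bar\xi})\|$ via implicit differentiation of the fixed point. Instead it exploits the Lagrange-multiplier representation: because $(\Psi_{\varepsilon,\bar t,\bar\xi},\Phi_{\varepsilon,\bar t,\bar\xi})$ solves the projected equation, one has
\[
\mathcal{J}_{\varepsilon}'(\mathcal{W}+\Psi,\mathcal{H}+\Phi)(\varphi,\psi)=\sum_{l,m}c_{lm}\big\langle(\Psi^{l}_{\delta_{m},\xi_{m}},\Phi^{l}_{\delta_{m},\xi_{m}}),(\varphi,\psi)\big\rangle_{h}
\]
for some scalars $c_{lm}$, and a test-function computation gives the key bound $\sum_{l,m}|c_{lm}|=O(\varepsilon^{\vartheta})$. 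With this in hand, the cross term $\mathcal{J}_{\varepsilon}'(\cdot)(\partial_{s}\Psi,\partial_{s}\Phi)$ is rewritten as $-\sum c_{lm}\langle(\partial_{s}\Psi^{l},\partial_{s}\Phi^{l}),(\Psi,\Phi)\rangle_{h}$ via the differentiated orthogonality, and each inner product is $o(\varepsilon^{\vartheta})$ (resp.\ $o(\varepsilon^{\vartheta-1/2})$) by direct scaling. No estimate on $\partial_{s}(\Psi,\Phi)$ itself is ever needed. For the remaining pieces the paper groups $-\Delta_{g}\Psi^{l}+h\Psi^{l}-f'_{\varepsilon}(\mathcal{H})\Phi^{l}$ as a ``linearised residual'' and bounds it in $L^{(p+1)/p}$ by repeating Lemma~\ref{error}; this absorbs the $\delta_{m}^{-1}$ cleanly.

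Your route via implicit differentiation is legitimate but heavier: you must track the parameter dependence of $\Pi_{\bar\delta,\bar\xi}^{\bot}$ and of $\mathcal{L}_{\varepsilon,\bar t,\bar\xi}$ when differentiating $\mathcal{T}_{\varepsilon,\bar t,\bar\xi}$, and project $\partial_{s}(\Psi,\Phi)$ back onto $\mathcal{Z}_{\bar\delta,\bar\xi}$ before invoking Lemma~\ref{line}. These are standard steps but are not spelled out. Also, your reference to ``step~2 of the proof of Lemma~\ref{line}'' is not quite right: that step shows $c^{l}_{mn}\to 0$ in the linear problem, not a differentiated-orthogonality identity. The identity you need is the one used in \eqref{e53}--\eqref{e55}. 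In short, your plan would work with additional bookkeeping; the paper's $c_{lm}$-representation is a shortcut that bypasses the need to control $\partial_{s}(\Psi,\Phi)$.
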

\begin{proof}
For any $(\varphi,\psi)\in \mathcal{X}_{p,q}(\mathcal{M})$, by Proposition \ref{propo1}, there exist $c_{10},c_{11},\cdots,c_{1N}$,  $c_{20},c_{21},\cdots,c_{2N}$, $\cdots$, $c_{k0},c_{k1},\cdots,c_{kN}$ such that
\begin{equation}\label{real}
  \mathcal{J}_\varepsilon'(\mathcal{W}_{\bar{\delta},\bar{\xi}}+\Psi_{\varepsilon,\bar{t},\bar{\xi}},\mathcal{H}_{\delta,\xi}+\Phi_{\varepsilon,\bar{t},\bar{\xi}})(\varphi,\psi)=
  \sum\limits_{l=0}^N\sum\limits_{m=1}^kc_{lm}\big\langle\big(\Psi_{\delta_m,\xi_m}^l,\Phi^l_{\delta_m,\xi_m}\big),(\varphi,\psi)\big\rangle_h.
\end{equation}
We claim that: for any $\vartheta\in (0,1)$, there holds
\begin{equation}\label{cla}
  \sum\limits_{l=0}^N\sum\limits_{m=1}^k|c_{lm}|=O(\varepsilon^{\vartheta}).
\end{equation}
Taking $(\varphi,\psi)=(\Psi_{\delta_j,\xi_j}^i,\Phi^i_{\delta_j,\xi_j})$, $0\leq i\leq N$, $1\leq j\leq k$, by \eqref{gu1}-\eqref{gu4}, we have
  \begin{align}\label{take1}
  &\sum\limits_{l=0}^N\sum\limits_{m=1}^kc_{lm}\big\langle\big(\Psi_{\delta_m,\xi_m}^l,\Phi^l_{\delta_m,\xi_m}\big),\big(\Psi_{\delta_j,\xi_j}^i,\Phi^i_{\delta_j,\xi_j}\big)\big\rangle_h
    \nonumber\\=&\sum\limits_{l=0}^N\sum\limits_{m=1}^kc_{lm}\delta_{il}\delta_{jm}\int\limits_{B(0,r_0/{\delta_j})}\big(p\chi^2_{\delta_j} V_{1,0}^{p-1}(\Phi_{1,0}^i)^2+q\chi^2_{\delta_j} U_{1,0}^{q-1}(\Psi_{1,0}^i)^2
  \big)dx+O(\varepsilon),
  \end{align}
as $\varepsilon\rightarrow0$,   uniformly with respect to $\bar{\xi}$ in $\mathcal{M}^k$ and to $\bar{t}$ in compact subsets of $(\mathbb{R}^+)^k$, where  $\chi_{\delta_j}(x)=\chi({\delta_j|x|})$. On the other hand,  it follows from $(\Psi_{\varepsilon,\bar{t},\bar{\xi}},\Phi_{\varepsilon,\bar{t},\bar{\xi}})\in
  \mathcal{Z}_{\bar{\delta},\bar{\xi}}$ that
  \begin{align}\label{take2}
    &\mathcal{J}_\varepsilon'(\mathcal{W}_{\bar{\delta},\bar{\xi}}+\Psi_{\varepsilon,\bar{t},\bar{\xi}},\mathcal{H}_{\delta,\xi}+\Phi_{\varepsilon,\bar{t},\bar{\xi}})(\Psi_{\delta_j,\xi_j}^i,\Phi^i_{\delta_j,\xi_j})
   \nonumber \\=&\int\limits_{\mathcal{M}}\big(-\Delta_g \mathcal{W}_{\bar{\delta},\bar{\xi}}+h\mathcal{W}_{\bar{\delta},\bar{\xi}}-f_\varepsilon(\mathcal{H}_{\bar{\delta},\bar{\xi}})\big)\Phi^i_{\delta_j,\xi_j}d v_g+\int\limits_{\mathcal{M}}\big(-\Delta_g \mathcal{H}_{\bar{\delta},\bar{\xi}}+h\mathcal{H}_{\bar{\delta},\bar{\xi}}-g_\varepsilon(\mathcal{W}_{\bar{\delta},\bar{\xi}})\big)\Psi^i_{\delta_j,\xi_j}d v_g\nonumber\\
    &-\int\limits_{\mathcal{M}}\big(f_\varepsilon(\mathcal{H}_{\bar{\delta},\bar{\xi}}+\Phi_{\varepsilon,\bar{t},\bar{\xi}})-f_\varepsilon(\mathcal{H}_{\bar{\delta},\bar{\xi}})\big)\Phi^i_{\delta_j,\xi_j}dv_g
    -\int\limits_{\mathcal{M}}\big(g_\varepsilon(\mathcal{W}_{\bar{\delta},\bar{\xi}}+\Psi_{\varepsilon,\bar{t},\bar{\xi}})-g_\varepsilon(\mathcal{W}_{\bar{\delta},\bar{\xi}})\big)\Psi^i_{\delta_j,\xi_j}dv_g
 \nonumber \\ \leq &\big\|-\Delta_g \mathcal{W}_{\bar{\delta},\bar{\xi}}+h\mathcal{W}_{\bar{\delta},\bar{\xi}}-f_\varepsilon(\mathcal{H}_{\bar{\delta},\bar{\xi}})\big\|_{\frac{p+1}{p}}\|\Phi^i_{\delta_j,\xi_j}\|_{p+1}
 +\big\|-\Delta_g \mathcal{H}_{\bar{\delta},\bar{\xi}}+h\mathcal{H}_{\bar{\delta},\bar{\xi}}-g_\varepsilon(\mathcal{W}_{\bar{\delta},\bar{\xi}})\big\|_{\frac{q+1}{q}}\|\Psi^i_{\delta_j,\xi_j}\|_{q+1}
 \nonumber \\&+C\|\Phi^i_{\delta_j,\xi_j}\|_{p+1}\|\Phi_{\varepsilon,\bar{t},\bar{\xi}}\|_{\frac{p+1}{1+\alpha\varepsilon}}\sum\limits_{j=1}^k\|H_{\delta_j,\xi_j}\|_{p+1}^{p-1-\alpha\varepsilon}+
  C\|\Phi^i_{\delta_j,\xi_j}\|_{p+1}\|\Phi_{\varepsilon,\bar{t},\bar{\xi}}\|_{\frac{(p-\alpha\varepsilon)(p+1)}{p}}^{p-\alpha\varepsilon}\nonumber\\
  &+C\|\Psi^i_{\delta_j,\xi_j}\|_{q+1}\|\Psi_{\varepsilon,\bar{t},\bar{\xi}}\|_{\frac{q+1}{1+\beta\varepsilon}}\sum\limits_{j=1}^k\|W_{\delta_j,\xi_j}\|_{q+1}^{q-1-\beta\varepsilon}+
  C\|\Psi^i_{\delta_j,\xi_j}\|_{q+1}\|\Psi_{\varepsilon,\bar{t},\bar{\xi}}\|_{\frac{(q-\beta\varepsilon)(q+1)}{q}}^{q-\beta\varepsilon}
  =o(\varepsilon^{\vartheta}),
  \end{align}
as $\varepsilon\rightarrow0$,   uniformly with respect to $\bar{\xi}$ in $\mathcal{M}^k$ and to $\bar{t}$ in compact subsets of $(\mathbb{R}^+)^k$,
  where we have used the fact that $\|\Psi^i_{\delta_j,\xi_j}\|_{q+1}<+\infty$ and $\|\Phi^i_{\delta_j,\xi_j}\|_{p+1}<+\infty$ for any $1<p\leq \frac{N+2}{N-2}\leq q$, $i=0,1,\cdots,N$ and $j=1,2,\cdots,k$. From \eqref{take1} and \eqref{take2}, we prove the claim.

 By \eqref{ch1} and \eqref{ch2}, we can compute
  \begin{align}\label{C11}
  &\partial_{t_m}\widetilde{\mathcal{J}}_{\varepsilon}(\bar{t},\bar{\xi})-\partial_{t_m} \Psi_k(\bar{t},\bar{\xi})\nonumber\\
  =&-\frac{1}{2t_m}\Big(\int\limits_{\mathcal{M}}\big(-\Delta_g \Psi^0_{\delta_m,\xi_m}+h\Psi^0_{\delta_m,\xi_m}-f'_\varepsilon(\mathcal{H}_{\bar{\delta},\bar{\xi}})\Phi^0_{\delta_m,\xi_m}\big)\Phi_{\varepsilon,\bar{t},\bar{\xi}}d v_g\nonumber\\
  &+\int\limits_{\mathcal{M}}\big(-\Delta_g \Phi^0_{\delta_m,\xi_m}+h\Phi^0_{\delta_m,\xi}-g'_\varepsilon(\mathcal{W}_{\bar{\delta},\bar{\xi}})\Psi^0_{\delta_m,\xi_m}\big)\Psi_{\varepsilon,\bar{t},\bar{\xi}}d v_g\nonumber\\
  &-\int\limits_{\mathcal{M}}\big(f_\varepsilon(\mathcal{H}_{\bar{\delta},\bar{\xi}}+\Phi_{\varepsilon,\bar{t},\bar{\xi}})-
  f_\varepsilon(\mathcal{H}_{\bar{\delta},\bar{\xi}})-f'_\varepsilon(\mathcal{H}_{\bar{\delta},\bar{\xi}}) \Phi_{\varepsilon,\bar{t},\bar{\xi}}\big) \Phi^0_{\delta_m,\xi_m}dv_g\nonumber\\
  &-\int\limits_{\mathcal{M}}\big(g_\varepsilon(\mathcal{W}_{\bar{\delta},\bar{\xi}}+\Psi_{\varepsilon,\bar{t},\bar{\xi}})-
  g_\varepsilon(\mathcal{W}_{\bar{\delta},\bar{\xi}})-g'_\varepsilon(\mathcal{W}_{\bar{\delta},\bar{\xi}}) \Psi_{\varepsilon,\bar{t},\bar{\xi}}\big) \Psi^0_{\delta_m,\xi_m}dv_g\Big)\nonumber\\
  &+ \mathcal{J}_\varepsilon'(\mathcal{W}_{\bar{\delta},\bar{\xi}}+\Psi_{\varepsilon,\bar{t},\bar{\xi}},\mathcal{H}_{\bar{\delta},\bar{\xi}}
   +\Phi_{\varepsilon,\bar{t},\bar{\xi}})\big(\partial _{t_m}\Psi_{\varepsilon,\bar{t},\bar{\xi}},\partial _{t_m}\Phi_{\varepsilon,\bar{t},\bar{\xi}}\big),
\end{align}
and
\begin{align}\label{C12}
  &\partial_{y^m_l}\widetilde{\mathcal{J}}_{\varepsilon}(\bar{t},\bar{\xi}(y))\big|_{y=0}-\partial_{y^m_l}\Psi_k(\bar{t},\bar{\xi}(y))\big|_{y=0}\nonumber\\
  =&\frac{1}{\delta_m}\Big(\int\limits_{\mathcal{M}}\big(-\Delta_g \Psi^l_{\delta_m,\xi_m}+h\Psi^l_{\delta_m,\xi_m}-f'_\varepsilon(\mathcal{H}_{\bar{\delta},\bar{\xi}})\Phi^l_{\delta_m,\xi_m}\big)\Phi_{\varepsilon,\bar{t},\bar{\xi}}d v_g\nonumber\\
  &+\int\limits_{\mathcal{M}}\big(-\Delta_g \Phi^l_{\delta_m,\xi_m}+h\Phi^l_{\delta_m,\xi_m}-g'_\varepsilon(\mathcal{W}_{\bar{\delta},\bar{\xi}})\Psi^l_{\delta_m,\xi_m}\big)\Psi_{\varepsilon,\bar{t},\bar{\xi}}d v_g\nonumber\\
  &-\int\limits_{\mathcal{M}}\big(f_\varepsilon(\mathcal{H}_{\bar{\delta},\bar{\xi}}+\Phi_{\varepsilon,\bar{t},\bar{\xi}})-
  f_\varepsilon(\mathcal{H}_{\bar{\delta},\bar{\xi}})-f'_\varepsilon(\mathcal{H}_{\bar{\delta},\bar{\xi}}) \Phi_{\varepsilon,\bar{t},\bar{\xi}}\big) \Phi^l_{\delta_m,\xi_m}dv_g\nonumber\\
  &-\int\limits_{\mathcal{M}}\big(g_\varepsilon(\mathcal{W}_{\bar{\delta},\bar{\xi}}+\Psi_{\varepsilon,\bar{t},\bar{\xi}})-
  g_\varepsilon(\mathcal{W}_{\bar{\delta},\bar{\xi}})-g'_\varepsilon(\mathcal{W}_{\bar{\delta},\bar{\xi}}) \Psi_{\varepsilon,\bar{t},\bar{\xi}}\big) \Psi^l_{\delta_m,\xi_m}dv_g\Big)\nonumber\\
  &+ \mathcal{J}_\varepsilon'(\mathcal{W}_{\bar{\delta},\bar{\xi}}+\Psi_{\varepsilon,\bar{t},\bar{\xi}},\mathcal{H}_{\bar{\delta},\bar{\xi}}
   +\Phi_{\varepsilon,\bar{t},\bar{\xi}})
   \big(\partial _{y^m_l}\Psi_{\varepsilon,\bar{t},\bar{\xi}(y)}\big|_{y=0},\partial _{y^m_l}\Phi_{\varepsilon,\bar{t},\bar{\xi}(y)}\big|_{y=0}\big)+o(\varepsilon^{\frac{3\vartheta}{2}}),
 \end{align}
 as $\varepsilon\rightarrow0$.
 Next, we estimate \eqref{C11} and \eqref{C12}. By the H\"{o}lder inequality, Proposition \ref{propo1},
   and the Sobolev embedding theorem, arguing as Lemma \ref{error}, for any $l=0,1,\cdots,N$,  we have
\begin{align*}
  &\int\limits_{\mathcal{M}}\big(-\Delta_g \Psi^l_{\delta_m,\xi_m}+h\Psi^l_{\delta_m,\xi_m}-f'_\varepsilon(\mathcal{H}_{\bar{\delta},\bar{\xi}})\Phi^l_{\delta_m,\xi_m}\big)\Phi_{\varepsilon,\bar{t},\bar{\xi}}d v_g
  \\ \leq&
  \big\|-\Delta_g \Psi^l_{\delta_m,\xi_m}+h\Psi^l_{\delta_m,\xi_m}-f'_\varepsilon(\mathcal{H}_{\bar{\delta},\bar{\xi}})\Phi^l_{\delta_m,\xi_m}\big\|_{\frac{p+1}{p}}\|\Phi_{\varepsilon,\bar{t},\bar{\xi}}\|_{p+1}=o(\varepsilon^{2\vartheta}),
\end{align*}
and
\begin{align*}
  &\int\limits_{\mathcal{M}}\big(-\Delta_g \Phi^l_{\delta_m,\xi_m}+h\Phi^l_{\delta_m,\xi_m}-g'_\varepsilon(\mathcal{W}_{\bar{\delta},\bar{\xi}})\Psi^l_{\delta_m,\xi_m}\big)\Psi_{\varepsilon,\bar{t},\bar{\xi}}d v_g
  \\ \leq&
  \big\|-\Delta_g \Phi^l_{\delta_m,\xi_m}+h\Phi^l_{\delta_m,\xi_m}-g'_\varepsilon(\mathcal{W}_{\bar{\delta},\bar{\xi}})\Psi^l_{\delta_m,\xi_m}\big\|_{\frac{q+1}{q}}\|\Psi_{\varepsilon,\bar{t},\bar{\xi}}\|_{q+1}=o(\varepsilon^{2\vartheta}),
\end{align*}
as $\varepsilon\rightarrow0$, uniformly with respect to $\bar{\xi}$ in $\mathcal{M}^k$ and to $\bar{t}$ in compact subsets of $(\mathbb{R}^+)^k$.
Moreover, by the mean value formula, Lemma \ref{gs}, \eqref{ener5}, \eqref{ener7} and the Sobolev embedding theorem, for any $l=0,1,\cdots,N$,  we obtain
\begin{align*}
  &\int\limits_{\mathcal{M}}\big(f_\varepsilon(\mathcal{H}_{\bar{\delta},\bar{\xi}}+\Phi_{\varepsilon,\bar{t},\bar{\xi}})-
  f_\varepsilon(\mathcal{H}_{\bar{\delta},\bar{\xi}})-f'_\varepsilon(\mathcal{H}_{\bar{\delta},\bar{\xi}}) \Phi_{\varepsilon,\bar{t},\bar{\xi}}\big) \Phi^l_{\delta_m,\xi_m}dv_g\\
  \leq&
  C\int\limits_{\mathcal{M}}\mathcal{H}_{\bar{\delta},\bar{\xi}}^{p-2-\alpha\varepsilon}\Phi_{\varepsilon,\bar{t},\bar{\xi}}^{2}\Phi^l_{\delta_m,\xi_m}dv_g \leq C\|\Phi^l_{\delta_m,\xi_m}\|_{p+1}\|\Phi_{\varepsilon,\bar{t},\bar{\xi}}\|_{p+1}^{2}\sum\limits_{j=1}^k\|H_{\delta_j,\xi_j}\|_{\frac{(p-2-\alpha\varepsilon)(p+1)}{p-2}}^{p-2-\alpha\varepsilon}=o(\varepsilon^{2\vartheta}),
\end{align*}
and
\begin{align*}
  &\int\limits_{\mathcal{M}}\big(g_\varepsilon(\mathcal{W}_{\bar{\delta},\bar{\xi}}+\Psi_{\varepsilon,\bar{t},\bar{\xi}})-
  g_\varepsilon(\mathcal{W}_{\bar{\delta},\bar{\xi}})-g'_\varepsilon(\mathcal{W}_{\bar{\delta},\bar{\xi}}) \Psi_{\varepsilon,\bar{t},\bar{\xi}}\big)\Psi^l_{\delta_m,\xi_m}dv_g
  \\ \leq& \left\{
    \begin{array}{ll}
   \displaystyle C\int\limits_{\mathcal{M}}\mathcal{W}_{\bar{\delta},\bar{\xi}}^{q-2-\beta\varepsilon}\Psi^2_{\varepsilon,\bar{t},\bar{\xi}}\Psi^l_{\delta_m,\xi_m}dv_g+
  C\int\limits_{\mathcal{M}}\Psi_{\varepsilon,\bar{t},\bar{\xi}}^{q-\beta\varepsilon}\Psi^l_{\delta_m,\xi_m}dv_g,\quad &\text{if $q>2$},\\
  \displaystyle C\int\limits_{\mathcal{M}}\mathcal{W}_{\bar{\delta},\bar{\xi}}^{q-2-\beta\varepsilon}\Psi^2_{\varepsilon,\bar{t},\bar{\xi}}\Psi^l_{\delta_m,\xi_m}dv_g,\quad &\text{if $q\leq 2$},
  \end{array}
  \right.
  \\ \leq& \left\{
    \begin{array}{ll}
   \displaystyle C\|\Psi^l_{\delta_m,\xi_m}\|_{q+1}\|\Psi_{\varepsilon,\bar{t},\bar{\xi}}\|_{q+1}^{2}\sum\limits_{j=1}^k\|W_{\delta_j,\xi_j}\|_{\frac{(q-2-\beta\varepsilon)(q+1)}{q-2}}^{q-2-\beta\varepsilon}+
   \|\Psi^l_{\delta_m,\xi_m}\|_{q+1}\|\Psi_{\varepsilon,\bar{t},\bar{\xi}}\|_{\frac{(q-\beta\varepsilon)(q+1)}{q}}^{q-\beta\varepsilon},\quad &\text{if $q>2$},\\
  C\|\Psi^l_{\delta_m,\xi_m}\|_{q+1}\|\Psi_{\varepsilon,\bar{t},\bar{\xi}}\|_{q+1}^{2}\sum\limits_{j=1}^k\|W_{\delta_j,\xi_j}\|_{\frac{(q-2-\beta\varepsilon)(q+1)}{q-2}}^{q-2-\beta\varepsilon},\quad&
  \text{if $q\leq 2$},
  \end{array}
  \right.
  \\
  =&o(\varepsilon^{2\vartheta}),
\end{align*}
as $\varepsilon\rightarrow0$, uniformly with respect to $\bar{\xi}$ in $\mathcal{M}^k$ and to $\bar{t}$ in compact subsets of $(\mathbb{R}^+)^k$.
Using \eqref{embedd}, \eqref{e54}, \eqref{e55}, \eqref{real} and \eqref{cla}, for any $l=1,2,\cdots,N$,  we get
\begin{align*}
  &\mathcal{J}_\varepsilon'(\mathcal{W}_{\bar{\delta},\bar{\xi}}+\Psi_{\varepsilon,\bar{t},\bar{\xi}},\mathcal{H}_{\bar{\delta},\bar{\xi}}
   +\Phi_{\varepsilon,\bar{t},\bar{\xi}})\big(\partial _{t_m}\Psi_{\varepsilon,\bar{t},\bar{\xi}},\partial _{t_m}\Phi_{\varepsilon,\bar{t},\bar{\xi}}\big)
 \nonumber\\  =&\sum\limits_{i=0}^N\sum\limits_{j=1}^kc_{ij}\big\langle\big(\Psi_{\delta_j,\xi_j}^i,\Phi^i_{\delta_j,\xi_j}\big),\big(\partial _{t_m}\Psi_{\varepsilon,\bar{t},\bar{\xi}},\partial _{t_m}\Phi_{\varepsilon,\bar{t},\bar{\xi}}\big)\big\rangle_h\nonumber\\
 =&-\sum\limits_{i=0}^N\sum\limits_{j=1}^kc_{ij}\big\langle\big(\partial _{t_m}\Psi_{\delta_j,\xi_j}^i,\partial _{t_m}\Phi^i_{\delta_j,\xi_j}\big),\big(\Psi_{\varepsilon,\bar{t},\bar{\xi}},\Phi_{\varepsilon,\bar{t},\bar{\xi}}\big)\big\rangle_h
 \\=&o\Big(\varepsilon^\vartheta\sum\limits_{i=0}^N\sum\limits_{i=1}^k|c_{ij}|\Big)=o(\varepsilon^{2\vartheta}),
\end{align*}
and
\begin{align*}
  &\mathcal{J}_\varepsilon'(\mathcal{W}_{\bar{\delta},\bar{\xi}}+\Psi_{\varepsilon,\bar{t},\bar{\xi}},\mathcal{H}_{\bar{\delta},\bar{\xi}}
   +\Phi_{\varepsilon,\bar{t},\bar{\xi}})
   \big(\partial _{y^m_l}\Psi_{\varepsilon,\bar{t},\bar{\xi}(y)}\big|_{y=0},\partial _{y^m_l}\Phi_{\varepsilon,\bar{t},\bar{\xi}(y)}\big|_{y=0}\big)\nonumber\\
   =&
   \sum\limits_{i=0}^N\sum\limits_{j=1}^kc_{ij}\big\langle\big(\Psi_{\delta_j,\xi_j}^i,\Phi^i_{\delta_j,\xi_j}\big),\big(\partial _{y^m_l}\Psi_{\varepsilon,\bar{t},\bar{\xi}(y)}\big|_{y=0},\partial _{y^m_l}\Phi_{\varepsilon,\bar{t},\bar{\xi}(y)}\big|_{y=0}\big)\big\rangle_h\nonumber\\
   =&-\sum\limits_{i=0}^N\sum\limits_{j=1}^kc_{ij}\big\langle\big(\partial _{y^m_l}\Psi_{\delta_j,\xi_j}^i\big|_{y=0},\partial _{y^m_l}\Phi^i_{\delta_j,\xi_j}\big|_{y=0}\big),
   \big(\Psi_{\varepsilon,\bar{t},\bar{\xi}(y)},\Phi_{\varepsilon,\bar{t},\bar{\xi}(y)}\big)\big\rangle_h\\
   =&o\Big(\varepsilon^{\frac{2\vartheta-1}{2}}\sum\limits_{i=0}^N\sum\limits_{j=1}^k|c_{ij}|\Big)=o\big(\varepsilon^{\frac{4\vartheta-1}{2}}\big),
\end{align*}
as $\varepsilon\rightarrow0$, uniformly with respect to $\bar{\xi}$ in $\mathcal{M}^k$ and to $\bar{t}$ in compact subsets of $(\mathbb{R}^+)^k$.
Taking $\frac{3}{4}<\vartheta<1$, we complete the proof.
\end{proof}

\end{document}